
\documentclass[final, 11pt]{amsart}

\usepackage[utf8]{inputenc} 
\usepackage{bbm}

\usepackage{siunitx}
\usepackage{mathrsfs} 

\usepackage{graphicx} 

\usepackage[margin=3cm]{geometry}

\usepackage{verbatim}
\usepackage{amssymb}
\usepackage{amsthm}
\usepackage{amsmath}
\usepackage{amsfonts}
\usepackage{latexsym}
\usepackage{mathtools}
\usepackage{enumitem} 
\usepackage{cite}
\usepackage[english]{babel}

\usepackage[usenames,dvipsnames]{xcolor} 
\usepackage[colorlinks=true, pdfstartview=FitV, linkcolor=blue, citecolor=blue, urlcolor=blue]{hyperref}

\usepackage[color]{showkeys}

\usepackage{color}

\theoremstyle{plain}
\newtheorem{thm}{Theorem}
\newtheorem{remark}[thm]{Remark}
\newtheorem{prop}[thm]{Proposition}
\newtheorem{cor}[thm]{Corollary}
\newtheorem{lemma}[thm]{Lemma}

\numberwithin{equation}{section}
\numberwithin{thm}{section}


\newcommand{\pv}{\text{pv}}

\def\R {\mathbb R}
\def\T {\mathbb T}

\newcommand{\fsonenu}{\dot{\mathcal{F}}^{s,1}_{\nu}}
\newcommand{\fsplusonenu}{\dot{\mathcal{F}}^{s+1,1}_{\nu}}
\newcommand{\fzerone}{\mathcal{F}^{0,1}}
\newcommand{\fzeronenu}{\mathcal{F}^{0,1}_{\nu}}

\newcommand{\foneonenu}{\dot{\mathcal{F}}^{1,1}_{\nu}}

\newcommand{\ftwoonenu}{\dot{\mathcal{F}}^{2,1}_{\nu}}

\newcommand{\ffourone}{\dot{\mathcal{F}}^{4,1}}

\begin{document}

\addtocounter{footnote}{-1}\let\thefootnote\svthefootnote	

\title[On Nonlinear Stability of Muskat Bubbles]{On Nonlinear Stability of Muskat Bubbles}


\author[F. Gancedo]{Francisco Gancedo}
\address{Departamento de An\'alisis Matem\'atico, Universidad de Sevilla, C/Tarfia s/n, Campus Reina Mercedes, 41012, Sevilla, Spain. \href{mailto:fgancedo@us.es}{fgancedo@us.es}}

\author[E. Garc\'ia-Ju\'arez]{Eduardo Garc\'ia-Ju\'arez}
\address{Departamento de An\'alisis Matem\'atico, Universidad de Sevilla, C/Tarfia s/n, Campus Reina Mercedes, 41012, Sevilla, Spain. \href{mailto:egarcia12@us.es}{egarcia12@us.es}}

\author[N. Patel]{Neel Patel}
\address{Department of Mathematics and Statistics, University of Maine, Orono. Neville Hall 04469, Orono, Maine, USA 48109. \href{mailto:neel.patel@maine.edu}{neel.patel@maine.edu}}

\author[R. M. Strain]{Robert M. Strain}
\address{Department of Mathematics, University of Pennsylvania, David Rittenhouse Lab., 209 South 33rd St., Philadelphia, PA 19104, USA. \href{mailto:strain@math.upenn.edu}{strain@math.upenn.edu}}

\date{\today}

\begin{abstract}
In this paper we consider gravity-capillarity Muskat bubbles in 2D. We obtain a new approach to improve our result in \cite{GancedoGarciaJuarezPatelStrain23}. Due to a new bubble-adapted formulation, the improvement is two fold. We significantly condense the proof and we now obtain the global well-posedness result for Muskat bubbles in critical regularity.

\end{abstract}

\setcounter{tocdepth}{3}

\maketitle
\tableofcontents

\section{Introduction}

In this paper, we consider the dynamics of a closed curve interface between two incompressible and immiscible fluids in a two-dimensional porous media.
The physical principle for porous media flow velocity $u(t,x)$ is given by Darcy's law \cite{Darcy56},
\begin{equation}\label{Darcy}
\frac{\mu(x,t)}{\kappa} u(x,t)=-\nabla p(x,t)-(0,g\rho(x,t)).
\end{equation}
Above $\mu$ is the viscosity of the fluid, $\kappa$ is the porous medium permeability, $p$ is the fluid pressure, $g$ is the gravity constant and $\rho$ is the fluid density. Darcy's law is paired with the incompressibility condition that the fluid velocity is divergence free:
\begin{equation}\label{incom}
\nabla \cdot u(x,t)=0.
\end{equation}
The two-phase problem of two immiscible fluids of different densities and viscosities filtered in the porous medium can be described as:
\begin{equation}\label{patchsolution}
\mu(x,t)=\left\{\begin{array}{rl}
\mu^1,& x\in D(t),\\
\mu^2,& x\in \mathbb{R}^2\smallsetminus \overline{D(t)},
\end{array}\right.
\quad 
\rho(x,t)=\left\{\begin{array}{rl}
\rho^1,& x\in D(t),\\
\rho^2,& x\in \mathbb{R}^2\smallsetminus \overline{D(t)}.
\end{array}\right.
\end{equation}
The density $\rho^{i}$ and viscosity $\mu^{i}$ are different non-negative constant values in complementary domains. In this paper, the domain $D(t)$ will be a bounded and connected domain, i.e. a bubble, and hence, the interface $\partial D(t)$ is a closed curve.
To complement the gravitational force, the interface may be subjected to surface tension:
\begin{equation}
\label{surfacetension}
p^2(x)-p^1(x)=-\sigma K(x),\hspace{1cm}x\in\partial D(t),
\end{equation}
where $p^1$, $p^2$ are the limits of the pressure from the interior and the exterior of $D(t)$, $K$ is the boundary curvature, taken to be positive for circles that are counterclockwise parameterized, and $\sigma>0$ is the surface tension coefficient. Finally, the free boundary $\partial D(t)$ is transported by the fluid velocity.
The problem of studying the dynamics of this situation is called the Muskat problem \cite{Muskat34}. 

Notably, gravity $g$ in \eqref{Darcy} is an unstable force for any closed curve interface separating fluids of different densities, which is the situation we deal with in this paper. This instability is seen from the sign of the Rayleigh-Taylor coefficient in the portion of the curve where the denser fluid lays above the lighter fluid and where the less viscous fluid moves into the more viscous one. Indeed, in the thoroughly studied regime where the fluid interface is an infinite curve, local well-posedness depends on the sign of the Rayleigh-Taylor coefficient, see e.g. \cite{CCG11}. When the infinite interface can be described by a graph, the correct sign is given by the denser fluid filling the domain below the graph, see e.g. \cite{CG07}. For a bubble, the Rayleigh-Taylor condition has mean zero, and therefore it has an unstable structure which is addressed in this paper.

Equations \eqref{Darcy}-\eqref{surfacetension} will imply the contour evolution equation we derive below.

\subsection{Contour equation}
We will now derive the evolution equation for the fluid interface of a bubble. The boundary $\partial D(t)$ represents the fluid interface of a drop or bubble and is parameterized counterclockwise by 
$$
\partial D(t)=\{z(\alpha,t):\, \alpha\in\mathbb{T}=\mathbb{R}/2\pi\}.
$$
Darcy's law \eqref{Darcy} implies that the fluid vorticity is concentrated on the moving boundary:
$$
\langle
\nabla^{\bot}\cdot u(t),v
\rangle =\int_\T\omega(\alpha,t)\varphi(z(\alpha,t))d\alpha.
$$
Above, $\omega$ is the vorticity amplitude and $\varphi\in C^{\infty}_c(\R^2)$ is a test function. Then the Biot-Savart law provides an expression for the fluid velocity:
\begin{equation*}
u(x,t)=\frac1{2\pi} \int_{\mathbb{T}} \frac{(x-z(\alpha,t))^\perp}{|x-z(\alpha,t)|^2}\omega(\alpha,t)d\alpha,\hspace{1cm}x\neq z(\alpha,t).
\end{equation*}
Taking limits towards the boundary, the identity above provides
\begin{equation*}
\begin{aligned}
u^1(z(\alpha,t))&=BR[z](\omega)(\alpha,t)-\frac{\omega(\alpha,t)}2\frac{\partial_{\alpha}z(\alpha,t)}{|\partial_{\alpha}z(\alpha,t)|^2},\\
u^2(z(\alpha,t))&=BR[z](\omega)(\alpha,t)+\frac{\omega(\alpha,t)}2\frac{\partial_{\alpha}z(\alpha,t)}{|\partial_{\alpha}z(\alpha,t)|^2},
\end{aligned}
\end{equation*}
where
\begin{equation}\label{eqBR}
BR[z](\omega)(\alpha,t)=\frac1{2\pi} \pv\int_{\mathbb{T}} \frac{(z(\alpha,t)-z(\beta,t))^\perp}{|z(\alpha,t)-z(\beta,t)|^2}\omega(\beta,t)d\beta.
\end{equation}
The limits above are used in Darcy's law to obtain
\begin{equation}\label{Darcyboundary}
\frac{\mu^j}{\kappa}u^j(z(\alpha,t))=-\nabla p^j(z(\alpha,t),t)-(0,g\rho^j).
\end{equation}
After applying a dot product in the tangential direction to both boundary limit identities, take their difference and use \eqref{surfacetension} and \eqref{Darcyboundary} to obtain
\begin{equation}\label{eqOmega}
\omega(\alpha,t)=2 A_\mu \mathcal{D}[z](\omega)(\alpha,t)+2A_\sigma\partial_{\alpha}K(\alpha,t)-2A_\rho\partial_{\alpha}z_2(\alpha,t),
\end{equation}
where
\begin{equation}\label{Dz}
\begin{aligned}
\mathcal{D}[z](\omega)(\alpha,t)&=-BR[z](\omega)(\alpha,t)\cdot\partial_{\alpha}z(\alpha,t)\\
&=\frac1{2\pi} \pv\int_{\mathbb{T}}
\frac{(z(\alpha,t)\!-\!z(\beta,t))\cdot\partial_{\alpha}z(\alpha,t)^\perp }{|z(\alpha,t)-z(\beta,t)|^2}\omega(\beta,t)d\beta,
\end{aligned}
\end{equation}
\begin{equation}\label{AmuAsigmaArho}
A_\mu=\frac{\mu^2-\mu^1}{\mu^2+\mu^1},\hspace{1cm} A_{\sigma}=\frac{\kappa \sigma}{\mu^2+\mu^1}, \hspace{1cm}A_\rho=\frac{g\kappa(\rho^2-\rho^1)}{\mu^2+\mu^1},
\end{equation}
and the curvature is given by
\begin{equation}\label{curvature}
K(\alpha,t)=\frac{\partial_{\alpha}z(\alpha,t)^\perp\cdot\partial_{\alpha}^2z(\alpha,t)}{|\partial_{\alpha}z(\alpha,t)|^3}.
\end{equation}
We notice that there are two dimensionless numbers in the problem:
\begin{equation}\label{AmuAsigmaArho}
A_\mu=\frac{\mu^2-\mu^1}{\mu^2+\mu^1},\hspace{1cm} A_{\rho\sigma}=\frac{A_\rho R^2}{A_\sigma}=\frac{g(\rho^2-\rho^1)R^2}{\sigma}.
\end{equation}
The kinematic condition in the normal direction finally yields
\begin{equation}\label{eqcurve}
z_t(\alpha,t)\cdot\partial_{\alpha}z(\alpha,t)^\perp=BR[z(\alpha,t)](\omega(\alpha,t))\cdot\partial_{\alpha}z(\alpha,t)^\perp.
\end{equation}
The Muskat problem consists in finding a solution $(z,\omega)$ to the equations \eqref{eqOmega} and \eqref{eqcurve} given the initial datum $D_0=D(0)$.

It is easy to see that \eqref{Darcy}, \eqref{incom} and \eqref{patchsolution} provide a potential velocity
$$
u(x,t)=\nabla\phi(x,t),\quad x\not\in\partial D(t),
$$
with $\phi$ harmonic in $\R^2\smallsetminus\partial D(t)$. In the one-fluid case ($\mu^j=0=\rho^j$, $j=1$ or 2), the function $\phi$ is obtained in the fluid region
$$
\left\{\begin{array}{rcl}
\Delta\phi(x,t)&=&0,\quad x\mbox{ belong to the fluid region},\\
\frac{\mu^l}{\kappa}\phi(x,t)&=&-\sigma K(x)-g\rho^lx_2,\quad x\in\partial D(t),\, l\neq j.
\end{array}\right.
$$
The double layer potential theory provides $\phi(x,t)$ in the dry region. In the two-fluid case we solve
$$
\left\{\begin{array}{rcl}
\Delta\phi(x,t)&=& 0,\, x\in D(t)\cup \mathbb{R}^2\smallsetminus \overline{D(t)} ,\\
\frac{\mu^2}{\kappa}\phi^2(x,t)-\frac{\mu^1}{\kappa}\phi^1(x,t)&=&-\sigma K(x)-g(\rho^2-\rho^1)x_2,\, x\in\partial D(t),
\end{array}\right.
$$
with $\phi^j$ defined as before.
Defining the difference of the potentials at the moving boundary
$$
\Omega(\alpha,t)=\phi^2(z(\alpha,t),t)-\phi^1(z(\alpha,t),t),
$$
we find
\begin{equation}\label{vorti_pot}
    \partial_{\alpha}\Omega(\alpha,t)=(u^2(z(\alpha,t),t)-u^1(z(\alpha,t),t))\cdot\partial_{\alpha}z(\alpha,t)=\omega(\alpha,t).
\end{equation}
This fact implies that the amplitude of the vorticity has mean zero,
\begin{equation}\label{zeromean}
\int_{\mathbb{T}}\omega(\alpha)d\alpha=0,
\end{equation}
and therefore the system has finite energy
$$
\int_{\R^2}\rho(x,t)|u(x,t)|^2dx<C.
$$

\subsection{Main result}

We consider initial configurations where one fluid forms a bubble enclosed by the other. More precisely, we consider initial domains $D_0\subset\mathbb{R}^2$ whose boundary $\partial D_0$ admits a polar parametrization with pole at the centroid of $D_0$. That is, if we denote the center of mass of $D_0$ by
\begin{equation*}
    \begin{aligned}  
    x_{c,0}&=\frac{1}{|D_0|}\int_{D_0}x \,dx,\qquad |D_0|=\text{Vol}(D_0)=\int_{D_0}dx,
    \end{aligned}
\end{equation*}
we assume that there exists a parametrization of $\partial D_0$,     $\partial D_0=\{z_0(\alpha):\, \alpha\in \mathbb{T}\}$,
that can be written in the following form
\begin{equation}\label{z0_polar}
    \begin{aligned} z_0(\alpha)&=x_{c,0}+R(1+f_0(\alpha))\big(\cos(\alpha),\sin(\alpha)\big),
    \end{aligned}
\end{equation}
where $R=\sqrt{|D_0|/\pi}$ and $f_0:\mathbb{T}\to (-1,\infty)$ is such that
\begin{equation}\label{f_cond}
    \begin{aligned}  
    0&=\int_{\mathbb{T}}(1+f_0(\alpha))^3\big(\cos(\alpha),\sin(\alpha)\big)d\alpha,\qquad \pi=\frac12\int_{\mathbb{T}}(1+f_0(\alpha))^2d\alpha.
    \end{aligned}
\end{equation}
The first condition in \eqref{f_cond} ensures that the center of mass of the bubble described in \eqref{z0_polar} is $x_{c,0}$.  The second condition in \eqref{f_cond} guarantees that the volume of the domain enclosed by \eqref{z0_polar} is $|D_0|$. Since the problem is translation-invariant, we can, without loss of generality, assume that the initial center of mass is at the origin, $x_{c,0}=0$.
\begin{remark}
We say that two closed, simply connected curves $\Gamma_1,\Gamma_2\subset\mathbb{R}^2$  of class $C^1$ are $C^1$-close of order $\varepsilon$ if the Hausdorff distance between their normal bundles is smaller than $\varepsilon$. Then, the above assumption for $D_0$ holds whenever $D_0$ is a small enough $C^1$-perturbation of a circle with area $|D_0|$. That is, if $D_0$ is $C^1$-close of small enough order to a circle with the same area, then $D_0$ can be parametrized in polar coordinates with origin at its centroid. This easily follows from the existence of a normal parametrization (see for example \cite[Chapter 2]{PrussSimonett2016}).
\end{remark}
\begin{remark}
    Given any $x_{c,0}\in\mathbb{R}^2$, $R>0$, the curve $z_0(\alpha)$ \eqref{z0_polar}, with $f_0$ satisfying \eqref{f_cond}, is a circle if and only if $f_0\equiv0$. In fact, if $z_0$ is a circle and \eqref{f_cond} holds, then $x_{c,0}$ is its center and $R$ its radius, thus   $|z_0(\alpha)-x_{c,0}|=R|1+f_0(\alpha)|=R$, which implies $f_0\equiv0$. 
\end{remark}
We will also use a polar parametrization for the evolving interface, but with a time-dependent pole $c(t)=(c_1(t),c_2(t))$,
\begin{equation}\label{z_param}
    \begin{aligned} z(\alpha,t)&=R(1+f(\alpha,t))\big(\cos(\alpha),\sin(\alpha)\big)+\frac{A_\rho}{A_{\rho\sigma}}c(t),
    \end{aligned}
\end{equation}
with $c(t)$ defined in terms of $f$:
\begin{equation}\label{c_def}
    \begin{aligned}
        c_1(t)&=\frac{1}{\pi}\int_0^t \int_{\mathbb{T}}f(\alpha,s)\cos{(\alpha})d\alpha ds,\\
        c_2(t)&=A_{\rho\sigma} t+\frac{1}{\pi}\int_0^t \int_{\mathbb{T}}f(\alpha,s)\sin{(\alpha})d\alpha ds.
    \end{aligned}
\end{equation}
The specific choice for the definition of $c(t)$ above will become apparent later in the proof. For clarity of exposition, we will keep the notation $c(t)$.
We should notice that the freedom in the choice of the parametrization for the interface translates into the definition of $c(t)$.

To solve the Muskat problem \eqref{eqBR}-\eqref{eqcurve} with $z_0$ \eqref{z0_polar} initial datum, we will need to determine $f(\alpha,t)$ solving the equations \eqref{vorticity_tilde}-\eqref{fequation1} given the initial function $f_0$ satisfying \eqref{f_cond}.

We will work with functional spaces closely related to the Wiener algebra. We recall that the Wiener algebra $\fzerone(\mathbb{T})$ (also denoted $\mathbb{A}(\mathbb{T})$) is the space of functions whose Fourier series is absolutely convergent. We define analogous spaces with fractional regularity. For $s\geq0$,  let $\dot{\mathcal{F}}^{s,1}(\mathbb{T})$ be the homogeneous Wiener spaces induced by the seminorm
\begin{equation}\label{norm}
\|f\|_{\dot{\mathcal{F}}^{s,1}}=\sum_{k\in \mathbb{Z}\setminus\{0\}}|k|^s|\hat{f}(k)|,
\end{equation}
and $\mathcal{F}^{s,1}(\mathbb{T})$ their nonhomogeneous counterpart with norm
\begin{equation}\label{norm}
\|f\|_{\mathcal{F}^{s,1}}=|\hat{f}(0)|+\sum_{k\in \mathbb{Z}\setminus\{0\}}|k|^s|\hat{f}(k)|.
\end{equation}
Similarly, we can define spaces with analytic regularity: for $\nu>0$, $t>0$, we define the space $\dot{\mathcal{F}}^{s,1}_{\nu,t}(\mathbb{T})$ induced by the norm
\begin{equation}\label{normanalytic}
\|f\|_{\dot{\mathcal{F}}^{s,1}_{\nu,t}}=\sum_{k\in \mathbb{Z}\setminus\{0\}}e^{\nu|k|t}|k|^s|\hat{f}(k)|.
\end{equation}
Along the paper, the norm $\dot{\mathcal{F}}^{s,1}_{\nu,t}(\mathbb{T})$ will always be used on a time-dependent function at time $t$, $\|f(\cdot,t)\|_{\dot{\mathcal{F}}^{s,1}_{\nu,t}}$. Therefore, we will omit the sub-index $t$ and also the domain $\mathbb{T}$, and simply denote $\dot{\mathcal{F}}^{s,1}_\nu\equiv\dot{\mathcal{F}}^{s,1}_{\nu,t}$.

The main contribution of this paper is to show the global-in-time existence, uniqueness and asymptotic stability of classical solutions to the Muskat problem with initial data given by a perturbation of a circle with $\mathcal{F}^{1,1}$ regularity. 
This regularity is critical in the sense that in both limits $A_\rho=0$ or $A_\sigma=0$, the scaling invariance of the Muskat problem respects the norm.
The solution converges exponentially fast to a circle that moves vertically with constant velocity equal to $A_\rho$. By classical solutions we mean that the equations are satisfied pointwise for all $t>0$ and that the initial data is achieved in the strong sense $\lim_{t\to0^+}\|f(t)-f_0\|_{\mathcal{F}^{1,1}}=0$. Furthermore, we show that the solution is analytic at all positive times.
\begin{thm}[Main Theorem]\label{MainTheorem}
    Let $f_0\in \mathcal{F}^{1,1}(\mathbb{T})$ and $A_\sigma>0$. For any $A_\mu\in[-1,1]$ and $A_{\rho\sigma}\in\mathbb{R}$, there exists $\nu>0$ and  $\varepsilon=\varepsilon(A_\mu, |A_{\rho\sigma}|)>0$ such that, for any $T>0$, if $\|f_0\|_{\mathcal{F}^{1,1}}\leq \varepsilon$, then  there exists a unique classical solution $f\in C([0,T];\mathcal{F}^{1,1}_\nu)\cap L^1(0,T;\mathcal{F}^{4,1}_\nu)$ to \eqref{fequation1}. Moreover, for $t\geq0$,
    \begin{equation*}
        \begin{aligned}
            \|f(t)\|_{\mathcal{F}^{1,1}_\nu}\leq \|f_0\|_{\mathcal{F}^{1,1}}e^{-Ct}, 
        \end{aligned}
    \end{equation*}
    for some positive constant $C=C(\|f_0\|_{\mathcal{F}^{1,1}},|A_{\rho\sigma}|,A_\mu,\nu)$.
\end{thm}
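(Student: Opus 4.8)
The plan is to recast \eqref{fequation1} as a small nonlinear and lower‑order perturbation of an explicit dissipative operator of order three, close an a priori estimate in the analytic Wiener space $\mathcal{F}^{1,1}_\nu$ by a continuity/bootstrap argument, and then promote that estimate to existence, uniqueness and continuity in time; the exponential decay and the instantaneous analyticity will be direct consequences of the a priori estimate. \textbf{Step 1 (structure of the equation).} Inserting the bubble‑adapted parametrization \eqref{z_param}--\eqref{c_def} into \eqref{eqOmega}--\eqref{eqcurve} and solving the implicit relation for the vorticity --- i.e. inverting $\mathrm{Id}-2A_\mu\mathcal{D}[z]$, which should be a bounded isomorphism of $\mathcal{F}^{0,1}_\nu$ uniformly for $A_\mu\in[-1,1]$ whenever $\|f\|$ is small --- one obtains, after the natural non‑dimensionalization, an equation of the form
\[
\partial_t f \;=\; -\Lambda\big(\Lambda^2-1\big)f \;+\; \mathcal{R}[f], \qquad \Lambda:=|\partial_\alpha|,
\]
whose linear part has Fourier symbol $-|k|(k^2-1)$: negative and of order three on every mode $|k|\ge 2$, and vanishing on $|k|\le 1$. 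The remainder $\mathcal{R}[f]$ is the sum of (a) linear‑in‑$f$ terms coming from gravity and from the viscosity jump, which are of order $\le 1$ in $\partial_\alpha$ but carry the non‑small constants $A_\mu$ and $A_{\rho\sigma}$ and --- because gravity enters only through $z_2$ and through the moving pole $c(t)$ --- are off‑diagonal in Fourier; and (b) the genuinely nonlinear terms, each at least quadratic in $f$ and built from products and commutators of $f,\partial_\alpha f$ with $BR[z]$, $\mathcal{D}[z]$ and the curvature \eqref{curvature}. The specific choice of $c(t)$ in \eqref{c_def} is exactly what removes the $k=\pm1$ non‑decaying forcing created by gravity --- consistent with relaxation to a circle translating at speed $A_\rho$ --- and arranges substantial cancellation among the remaining gravity terms; the conditions \eqref{f_cond}, propagated by the flow, force $\hat f(0)$ and $\hat f(\pm1)$ to be quadratically slaved to the higher modes, so the dynamics effectively lives on $|k|\ge 2$, where $|k|(k^2-1)\ge\tfrac34|k|^3\ge 6$ is coercive.

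\textbf{Step 2 (a priori estimate, decay, analyticity).} Differentiating $\|f(t)\|_{\mathcal{F}^{1,1}_\nu}=\sum_k|k|e^{\nu|k|t}|\hat f(k,t)|$ along the equation, the linear part contributes the dissipation $-\sum_k|k|^2(k^2-1)e^{\nu|k|t}|\hat f(k)|$, which dominates both $\|f\|_{\mathcal{F}^{4,1}_\nu}$ and $\|f\|_{\mathcal{F}^{1,1}_\nu}$ up to fixed constants, while differentiating the analytic weight costs only $\nu\|f\|_{\mathcal{F}^{2,1}_\nu}$, absorbed by taking $\nu$ small. For $\mathcal{R}[f]$ one uses the algebra inequality $\|gh\|_{\mathcal{F}^{0,1}_\nu}\le\|g\|_{\mathcal{F}^{0,1}_\nu}\|h\|_{\mathcal{F}^{0,1}_\nu}$, the tame estimate $\|gh\|_{\mathcal{F}^{s,1}_\nu}\lesssim\|g\|_{\mathcal{F}^{s,1}_\nu}\|h\|_{\mathcal{F}^{0,1}_\nu}+\|g\|_{\mathcal{F}^{0,1}_\nu}\|h\|_{\mathcal{F}^{s,1}_\nu}$, and commutator bounds for the singular‑integral pieces: after exploiting their off‑diagonal/commutator structure the linear terms of $\mathcal{R}$ are controlled by $C(A_\mu,|A_{\rho\sigma}|)\|f\|_{\mathcal{F}^{2,1}_\nu}$ and absorbed into the cubic dissipation (the surviving margin, hence the decay rate, depending on $A_\mu$ and $|A_{\rho\sigma}|$), while the nonlinear terms, once the kernels are expanded around the circle via $z=R(1+f)e^{i\alpha}$ (identifying $\mathbb{R}^2\cong\mathbb{C}$) into absolutely convergent series in $f$, are bounded by $C\|f\|_{\mathcal{F}^{1,1}_\nu}\|f\|_{\mathcal{F}^{4,1}_\nu}$ and higher‑order analogues, hence absorbed once $\|f\|_{\mathcal{F}^{1,1}_\nu}\le\varepsilon$. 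This yields, for suitable $\delta_0=\delta_0(A_\mu,|A_{\rho\sigma}|)>0$ and $C=C(\|f_0\|_{\mathcal{F}^{1,1}},|A_{\rho\sigma}|,A_\mu,\nu)>0$,
\[
\frac{d}{dt}\|f(t)\|_{\mathcal{F}^{1,1}_\nu}+C\,\|f(t)\|_{\mathcal{F}^{1,1}_\nu}+\delta_0\,\|f(t)\|_{\mathcal{F}^{4,1}_\nu}\;\le\;0
\]
as long as $\|f(t)\|_{\mathcal{F}^{1,1}_\nu}\le 2\varepsilon$; since this inequality makes $\|f(t)\|_{\mathcal{F}^{1,1}_\nu}$ strictly decrease from $\|f_0\|_{\mathcal{F}^{1,1}}\le\varepsilon$, the threshold $2\varepsilon$ is never reached, the estimate closes globally for every $T>0$, Grönwall gives $\|f(t)\|_{\mathcal{F}^{1,1}_\nu}\le\|f_0\|_{\mathcal{F}^{1,1}}e^{-Ct}$ and $f\in L^1(0,T;\mathcal{F}^{4,1}_\nu)$, and belonging to $\mathcal{F}^{s,1}_\nu$ with the weight $e^{\nu|k|t}$ is precisely analyticity of $f(\cdot,t)$ in a strip of width $\nu t$.

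\textbf{Step 3 (construction, uniqueness, continuity).} Existence follows by regularizing \eqref{fequation1} --- a Fourier truncation, or a parabolic regularization $\partial_t f^{(\epsilon)}=-\epsilon\Lambda^{4}f^{(\epsilon)}+\cdots$ --- so that the estimate of Step 2 holds uniformly in $\epsilon$; the uniform bounds in $L^\infty(0,T;\mathcal{F}^{1,1}_\nu)\cap L^1(0,T;\mathcal{F}^{4,1}_\nu)$ and on $\partial_t f^{(\epsilon)}$ give compactness and allow passing to the limit in \eqref{fequation1}. Uniqueness follows from a Grönwall estimate for the difference of two solutions in the weaker space $\mathcal{F}^{0,1}$, the lost derivative being paid by the third‑order dissipation. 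Continuity in time, $\lim_{t\to0^+}\|f(t)-f_0\|_{\mathcal{F}^{1,1}}=0$, and the classical‑solution property for $t>0$ follow from the equation together with dominated convergence applied to the absolutely convergent Fourier series (using $e^{\nu|k|t}\to1$ and the uniform bound). The main obstacle is the nonlinear bookkeeping inside Step 2: at the critical regularity $\mathcal{F}^{1,1}$ there is no spare smoothness, so every multilinear term generated by expanding $BR[z]$, $\mathcal{D}[z]$, $(\mathrm{Id}-2A_\mu\mathcal{D}[z])^{-1}$ and the curvature around the circle must be fit under a single third‑order dissipation using only the smallness of $\|f\|_{\mathcal{F}^{1,1}_\nu}$; this is where the bubble‑adapted complex/polar formulation is essential (keeping the kernel expansions clean and geometrically summable), where one must bound $(\mathrm{Id}-2A_\mu\mathcal{D}[z])^{-1}$ uniformly up to $|A_\mu|=1$, and --- for the linear gravity contributions, whose size is governed by $|A_{\rho\sigma}|$ and is therefore not perturbative --- where one must exploit the off‑diagonal/commutator structure and the cancellations produced by the choice of $c(t)$ so that the cubic dissipation still dominates, which is the reason $\varepsilon$ and $C$ depend on $|A_{\rho\sigma}|$ and $A_\mu$.
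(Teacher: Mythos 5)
Your overall scheme (invert $\mathrm{Id}-2A_\mu\mathcal{D}[f]$ by a Neumann series, treat the equation as a third-order dissipative flow plus perturbations, close in $\mathcal{F}^{1,1}_\nu$) is in the right spirit, but two steps at the heart of Step 2 do not work as stated. First, the $k=\pm1$ modes. You claim that the conditions \eqref{f_cond} are ``propagated by the flow'' and force $\hat f(\pm1)$ to be quadratically slaved to the higher modes, so that the dynamics effectively lives on $|k|\ge2$. Only the zero mode is slaved (by area conservation, as in \eqref{zero_freq}); the first condition in \eqref{f_cond} is an initial normalization of the pole and is \emph{not} preserved, because the pole subsequently moves according to \eqref{c_def}, which is not the centroid. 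Consequently $\hat f(\pm1)$ remains a genuine dynamical variable on which the symbol $-|k|(k^2-1)$ vanishes, and your claimed inequality $\frac{d}{dt}\|f\|_{\mathcal{F}^{1,1}_\nu}+C\|f\|_{\mathcal{F}^{1,1}_\nu}+\delta_0\|f\|_{\mathcal{F}^{4,1}_\nu}\le0$ cannot be extracted from the stated dissipation, which does not control the $|k|=1$ part of $\|f\|_{\mathcal{F}^{1,1}_\nu}$ at all. In the paper the decay of these modes is produced dynamically by the specific choice of $c(t)$: plugging \eqref{c_def} into the linearization \eqref{N_lin} shifts the eigenvalue at $k=1$ from $0$ to $-1$ (the $\delta_1(k)$ in $a_k=k(k^2-1+\delta_1(k))$). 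This is a modification of the evolution, not a constraint, and it is the ingredient your argument is missing.

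Second, the non-small linear term coming from gravity and the viscosity jump. In Fourier variables it is the off-diagonal coupling $b_k\hat g(k+1)$ with $b_k=i(1-A_\mu)A_{\rho\sigma}k$, and you propose to bound it by $C(A_\mu,|A_{\rho\sigma}|)\|f\|_{\mathcal{F}^{2,1}_\nu}$ and absorb it into the cubic dissipation. That absorption fails for large $|A_{\rho\sigma}|$: on a low mode such as $k=2$ the dissipation coefficient is a fixed number ($k^2(k^2-1)=12$) while the coupling is proportional to $|A_{\rho\sigma}|(1-A_\mu)$, so no margin survives once $|A_{\rho\sigma}|$ exceeds an absolute constant — yet the theorem is claimed for every $A_{\rho\sigma}\in\mathbb{R}$, with only $\varepsilon$ and the rate depending on it. The paper does not absorb this term at all: since the linear system is upper triangular (mode $k$ couples only to $k+1$), it is diagonalized exactly by the explicit operators $S,S^{-1}$ of Proposition \ref{diagonalization}, which are bounded on weighted $\ell^1$ uniformly by $C_S(|A_{\rho\sigma}|(1-A_\mu))$ (Lemma \ref{lemmaS}); the full decay rates $-a_k$ then survive for arbitrary $|A_{\rho\sigma}|$, and the nonlinear part is handled by a Duhamel/contraction argument (Theorem \ref{fixed_theo}) rather than your a priori estimate plus compactness. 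Your phrase about ``exploiting the off-diagonal/commutator structure'' points in this direction but supplies no mechanism, so as written both the global closure of the estimate and the exponential decay rate are unjustified.
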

Theorem \ref{MainTheorem} will be a consequence of the Banach fixed point theorem and Theorem \ref{fixed_theo}, since functions belonging to the space $\mathcal{F}^{1,1}_\nu$ are analytic for $t>0$.

\begin{remark} 
    The expression of $\varepsilon(A_\mu, |A_{\rho\sigma}|)$ can be computed explicitly, as was done in \cite{GancedoGarciaJuarezPatelStrain23}. For clarity of exposition and to avoid repetition, we will simply write $\varepsilon$. 
\end{remark}

\subsection{Previous results}

Most of the literature on the two-phase (i.e. two fluids) Muskat problem has focused on the case where the interface is an infinite curve and the fluid domains $D(t)$ and $D(t)^{c}$ are connected and unbounded. In this regime, the sign of the Rayleigh-Taylor coefficient provides a condition for local well-posedness. Moreover, when the infinite curve is a graph, the Rayleigh-Taylor sign condition holds when the denser fluid is below the graph. Local well-posedness of the graphical interface in subcritical spaces (see e.g. \cite{ConstGancShvyVicol2017},  \cite{Matioc19} \cite{AlazardLazar2020}, \cite{NguyenPausader20subcritical}) and, more recently, in critical spaces (see \cite{Alazardendpoint}, \cite{ChenNguyenXu22} and references therein)  is known. Further literature gives other arguments for local well-posedness for large data in certain subcritical spaces and global well-posedness for small data even in critical spaces  (see \cite{ChengRafaShkoller2016}, \cite{CordobaLazar2021}, \cite{Nguyen2022}, \cite{GJGomezSerranoNguyenPausader22}, \cite{GHHaziotGomezSerranoPausader23} and references therein). Interestingly, a graphical interface can turn into a non-graph interface and the Rayleigh-Taylor coefficient changes sign at the turnover point \cite{CCFGL12}. Smooth curve solutions have analytic regularity apart from turnover points \cite{Shi2023}. Also, both types of double shifting between stable and unstable regimes can occur \cite{CordobaGSZlatos2015}, \cite{CordobaGSZlatos2017}.  For initial data of explicit size and for settings with various conditions on the viscosity, permeability and density jumps between the two fluids, global well-posedness and quantitative estimates are proven in \cite{CCGRS-16}, \cite{Cam17}, \cite{GGPS}, \cite{PS17}, \cite{PatelNikhil21}, \cite{AlonsoOranGraneroBelinchon22}. Considering surface tension on an infinite interface, well-posedness is also known in \cite{Ambrose14}, \cite{Nguyen20s}, \cite{PatrickNguyen21}, \cite{MatiocMatioc23}, weak solutions are constructed in \cite{JacobsKimMeszaros21} and instability of fingering patterns is shown in \cite{EscherMatioc2011}.

Surface tension, which is a dominant force in this paper, has also been considered in the one-phase (i.e. one fluid) case. Note that the one-phase Muskat problem has a different structure, where global well-posedness holds even for initial interfaces with large slope \cite{DongGancedoNguyen2023} and local well-posedness is shown even for interfaces with cusps \cite{AgrawalPatelWu2023}. Surface tension stabilizes even a gravity unstable thin one-phase layer, as shown in \cite{GancedoGraneroBelinchonStefano2020}, with higher-order approximations to the model given in \cite{Bocchi23}. Furthermore, in the one-phase problem with surface tension and an external pressure, traveling wave solutions exist \cite{Nguyenlarge23}.

On the other hand, the case of a bubble interface is not as well understood. Mixing (no sharp interface) weak solutions to incompressible porous media flow starting from a bubble interface moved by gravity are demonstrated via convex integration in \cite{CastroFaracoMengual22}. For Hele-Shaw flow without gravity or without a jump in fluid densities, \cite{CP93}, \cite{YT11}, \cite{YT12} prove global well-posedness for small smooth (analytic or $H^{s}$ for $s \geq 4$) perturbations of a circular bubble. In the case of a gravity driven bubble,  our prior result \cite{GancedoGarciaJuarezPatelStrain23} proved the existence of a circular steady state and the global well-posedness for \textit{medium-sized} perturbations of a circular bubble. Importantly, the regularity of the bubble interface allowed for unbounded curvature and the size of the perturbations could be explicitly computed depending on the surface tension, viscosity and density parameters. The results in \cite{GancedoGarciaJuarezPatelStrain23} were based on a tangent vector description of the bubble interface, which already forces one derivative of the boundary into the framework of the evolution equation. Moreover, the tangent vector framework does not depend on the center of mass of the bubble and the linear degeneracy of the $k=\pm 1$ Fourier frequencies of the boundary had to be addressed implicitly. On the other hand, the framework introduced in this paper allows the evolution equation to describe the boundary rather than the tangent to the boundary. With this change, we can prove well-posedness at \textit{critical} regularity. Furthermore, the center of mass can be described explicitly from this new formulation and no implicit argument is needed to control degenerate Fourier modes. For these reasons, the proof is greatly simplified and the main result allows for a larger class of perturbations than in \cite{GancedoGarciaJuarezPatelStrain23}. Moreover, it is important to note that the approach in \cite{GancedoGarciaJuarezPatelStrain23} is specific to a two-dimensional bubble, but the new framework of this paper is general enough to be adapted to the three-dimensional case.

\subsection{Outline}

In the remainder of the introduction, we introduce the notation used in the paper. The first part of Section 2 is devoted to the derivation of the evolution equation in the new parametrization of this paper and its linearization. Then, the linear operator is diagonalized and the incompressibility condition is used to obtain a fully dissipative operator. The main Theorem \ref{MainTheorem} is proved using a fixed point argument. The choice of the parametrization, in particular of $c(t)$ \eqref{c_def}, becomes crucial. Section 3 contains the nonlinear estimates needed to perform the contraction for Theorem \ref{MainTheorem} in the previous section, while the solution to the vorticity integral equation is obtained in Section 4.

\subsection{Notation} We summarize some notations that will be used throughout the paper. We denote by $\mathcal{H}f$ the Hilbert transform
\begin{equation}\label{Hilbert_def}
    \begin{aligned}
        \mathcal{H}f(\alpha):=\frac{1}{\pi}\pv\int_{\mathbb{T}}\frac{f(\alpha-\beta)}{2\tan{(\beta/2)}}d\beta,
    \end{aligned}
\end{equation}
and  its derivative by $|\partial_\alpha|f$,
\begin{equation}\label{lambda_op}
    \begin{aligned}
        |\partial_\alpha|f(\alpha):=\mathcal{H}\partial_\alpha f(\alpha)=\frac{1}{\pi}\pv\int_{\mathbb{T}}\frac{f(\alpha)-f(\alpha-\beta)}{4\sin^2{(\beta/2)}}d\beta.
    \end{aligned}
\end{equation}
Both $\mathcal{F}(f)$ and $\widehat{f}$ will be use to denote the Fourier coefficients of the Fourier series of $f$:
\begin{equation}\label{Fourier_coef}
    \begin{aligned}
        \mathcal{F}(f)(k)\equiv\widehat{f}(k):=\frac{1}{2\pi}\int_{\mathbb{T}}f(\alpha)e^{-ik\alpha}d\alpha.
    \end{aligned}
\end{equation}
We use $n(\alpha)$ and $\tau(\alpha)$ for the unit normal and tangent vectors to the circle centered at the origin 
\begin{equation}\label{unit_vectors}
    n(\alpha)=\big(\cos(\alpha), \sin(\alpha)\big),\quad \tau(\alpha)=\big(-\sin(\alpha), \cos(\alpha)\big)=n(\alpha)^\perp.
\end{equation}

In addition to the Wiener spaces previously introduced, we define, for $\nu\geq0, s\geq0$, the space $Z_\nu^{s,1}$ of functions with zero mean, 
\begin{equation}\label{Z_space}
 Z_\nu^{s,1} = \big\{g\in\mathcal{F}^{s,1}_\nu(\mathbb{T}): \widehat{g}(0) = 0.\big\}
\end{equation}
When $\nu=0, s=0$, we simply denote $Z:=Z_0^{0,1}$.
Furthermore, we denote:
\begin{equation}\label{Xspace}
    \begin{aligned}
        X_{T,\nu}(\mathbb{T})&=C([0,T];Z^{1,1}_\nu)\cap L^1(0,T;\mathcal{F}^{4,1}_\nu),
    \end{aligned}
\end{equation}
and the associated norm
\begin{equation*}
    \begin{aligned}
        \|f\|_{X_{T,\nu}}&=\|f\|_{L^\infty_T(\mathcal{F}^{0,1}_\nu)}+\|f\|_{L^1_T(\mathcal{F}^{4,1}_\nu)}.
    \end{aligned}
\end{equation*}
We will denote $B_{r,T,\nu}(u_0)$ the ball of $X_{T,\nu}$ of radius $r>0$ and centered at $u_0$.
\begin{equation*}
    B_{r,T,\nu}(u_0)=\{f\in X_{T,\nu}(\mathbb{T}): \|f-u_0\|_{X_{T,\nu}}\leq r\}.
\end{equation*}

Given $G(f)$ we denote $G_0(f):=G(0)$ and
\begin{equation*}
    \begin{aligned}
        G_1(f)&:=\lim_{\varepsilon\to 0^+}\frac{d}{d\varepsilon}G(\varepsilon f),\\ 
        G_{\geq1}(f)&:=G(f)-G(0), \\ 
        G_{\geq2}
        (f)&:=G(f)-G(0)-G_1(f).
    \end{aligned}
\end{equation*}
The definition of the notation $G[f] = G_{0}[f]+ G_{1}[f]+G_{\geq 2}[f]$ and $G_{\geq1}[f]$ is equivalent to above.
For convolutions we will use
\begin{equation*}
    f_{1}\ast f_{2} \ast \cdots \ast f_{n} = \ast_{j=1}^{n} f_{j}.
\end{equation*}
Finally, the symbol $\lesssim$ will denote inequality up to a universal multiplicative constant, and we define analogously $\simeq$. Moreover, 
$\lesssim_1$ denotes a bound up to a uniformly bounded and analytic function $C:[0,\epsilon] \rightarrow [0,\infty)$ depending on $\|f\|_{\mathcal{F}^{1,1}_\nu}$ such that $\lim_{x\to0+}C(x)\simeq1$.

\section{Main Theorem}\label{formulation}

We first write the system given by equations \eqref{eqBR}-\eqref{eqcurve} in terms of $f(\alpha, t)$, keeping in mind that $c(t)$ is defined in terms of $f$ \eqref{c_def}. To simplify  notation we shall write $f(\alpha,t)=f(\alpha)$ when there is no danger of confusion. 
We introduce the notation
\begin{equation}\label{deltabeta}
\Delta_{\beta}f(\alpha)=\frac{f(\alpha-\beta)-f(\alpha)}{2\sin{(\beta/2)}},
\end{equation}
and compute the dimensionless curvature $K(f)$ in terms of $f$,
\begin{equation}\label{curvaturef}
K(f)(\alpha)=\frac{-(1+f(\alpha))\partial_\alpha^2f(\alpha)+2(\partial_\alpha f(\alpha))^2+(1+f(\alpha))^2}{\big((\partial_\alpha f(\alpha))^2+(1+f(\alpha))^2\big)^{3/2}}.
\end{equation}
From \eqref{eqOmega}, the vorticity strength is given by
\begin{equation*}
\begin{aligned}
\omega(\alpha)=2 A_\mu \mathcal{D}[f](\omega)(\alpha)+2\frac{A_\sigma}{R}\partial_{\alpha}K(f)(\alpha)-2A_\rho R\partial_\alpha\big((1+f(\alpha))\sin{\alpha}\big),
\end{aligned}
\end{equation*}
where $A_\mu, A_\sigma, A_\rho$ are defined in \eqref{AmuAsigmaArho}.
For convenience, we define $\tilde{\omega}(\alpha)=R\omega(\alpha)/A_\sigma$ so that
\begin{equation}\label{vorticity_tilde}
\tilde{\omega}(\alpha)\!=2 A_\mu \mathcal{D}[f](\tilde{\omega})(\alpha)+2\partial_{\alpha}K(f)(\alpha)-2A_{\rho\sigma} \partial_\alpha\big((1+f(\alpha))\sin{\alpha}\big),
\end{equation}
and
\begin{equation}\label{Doper_f}
\begin{aligned}
\mathcal{D}[f](\tilde{\omega})(\alpha)&\!=\frac{1}{2\pi}\pv\!\int_{\mathbb{T}}\!\!\frac{(1\!+\!f(\alpha))(1\!+\!f(\alpha\!-\!\beta))\sin{\left(\beta/2\right)}\!+\!(1\!+\!f(\alpha))\Delta_{\beta} f(\alpha)}{(\Delta_\beta f(\alpha))^2\!+\!(1\!+\!f(\alpha))(1\!+\!f(\alpha\!-\!\beta))}\frac{\tilde{\omega}(\alpha\!-\!\beta)}{2\sin{\left(\beta/2\right)}}d\beta\\
&\quad-\frac{1}{2\pi}\pv\int_{\mathbb{T}}\frac{(1+f(\alpha-\beta))\partial_{\alpha}f(\alpha)\cos{\left(\beta/2\right)}}{(\Delta_\beta f(\alpha))^2+(1+f(\alpha))(1+f(\alpha-\beta))}\frac{\tilde{\omega}(\alpha-\beta)}{2\sin{\left(\beta/2\right)}}d\beta.
\end{aligned}
\end{equation}
Plugging \eqref{z_param} into \eqref{eqcurve} we obtain that
\begin{equation}\label{fequation1aux}
\begin{aligned}
-R^2(1+f(\alpha))\partial_t f(\alpha)&=BR[z](\omega)(\alpha)\cdot\partial_\alpha z(\alpha)^\perp-\frac{A_\rho R}{A_{\rho\sigma}}\,\partial_\alpha\big((1+f(\alpha))\dot{c}\cdot \tau(\alpha)\big),
\end{aligned}
\end{equation}
and substituting the expression for $z(\alpha)$ \eqref{z_param} into the Birkhoff-Rott integral \eqref{eqBR} we conclude the evolution equation for $f(\alpha,t)$, 
\begin{equation*}
\begin{aligned}
\partial_t f(\alpha)&=\frac{A_\sigma}{R^3}N(f)(\alpha)\\
&=I_1(f)(\alpha)+I_2(f)(\alpha)+I_3(f)(\alpha),
\end{aligned}
\end{equation*}
where
\begin{equation*}
\begin{aligned}
I_1(f)(\alpha)&=\frac{1}{1+f(\alpha)}\partial_\alpha\big((1+f(\alpha))\dot{c}\cdot \tau(\alpha)\big),\\
I_2(f)(\alpha)&=-\frac{1/2\pi}{1+f(\alpha)}\pv\int_{\mathbb{T}}\!\frac{\partial_{\alpha}f(\alpha)\Delta_\beta f(\alpha)\!+\!(1\!+\!f(\alpha))(1\!+\!f(\alpha\!-\!\beta))\cos{\left(\beta/2\right)}}{(\Delta_\beta f(\alpha))^2+(1+f(\alpha))(1+f(\alpha-\beta))}\frac{\tilde{\omega}(\alpha\!-\!\beta)}{2\sin{\left(\beta/2\right)}}d\beta,
\\
I_3(f)(\alpha)&=-\frac{1/4\pi}{1+f(\alpha)}\int_{\mathbb{T}}\frac{\partial_{\alpha}f(\alpha)(1\!+\!f(\alpha\!-\!\beta))}{(\Delta_\beta f(\alpha))^2+(1+f(\alpha))(1+f(\alpha-\beta))}\tilde{\omega}(\alpha-\beta)d\beta.
\end{aligned}
\end{equation*}
For convenience and given that $A_\sigma>0$, we rescale time $t\rightarrow \frac{R^3}{A_\sigma}t$ so that 
\begin{equation*}
    \partial_t f(\alpha,\frac{R^3}{A_\sigma}t)=N(f)(\alpha,\frac{R^3}{A_\sigma}t).
\end{equation*}
Without loss of generality, we will abuse notation and keep $t$ as the rescale time variable:
\begin{equation}\label{fequation1}
\begin{aligned}
\partial_t f(\alpha)&=N(f)(\alpha)=I_1(f)(\alpha)+I_2(f)(\alpha)+I_3(f)(\alpha).
\end{aligned}
\end{equation}
Thus, our aim is to find $(f,\tilde{w})$ solving the equations \eqref{vorticity_tilde} and \eqref{fequation1} with initial data $f(\alpha,0)=f_0(\alpha)$ satisfying \eqref{f_cond}.

\subsection{Linearized operator}

We easily check that circles preserve their shape moving vertically with constant velocity. In fact, letting $f\equiv0$ in \eqref{fequation1} and \eqref{vorticity_tilde}, with $c(t)$ defined in \eqref{c_def}, we have

\begin{equation*}
\begin{aligned}
N_0(\alpha):=N(0)(\alpha)&=-A_{\rho\sigma}\sin{\alpha}-\frac{1}{2\pi}\pv\int_{\mathbb{T}} \frac{\cos{\left(\beta/2\right)}}{2\sin{\left(\beta/2\right)}}\tilde{\omega}_0(\alpha-\beta)d\beta,
\end{aligned}
\end{equation*}
and 
\begin{equation*}
\begin{aligned}
\tilde{\omega}_0(\alpha)&=\frac{A_\mu}{\pi}\int_{\mathbb{T}}\tilde{\omega}_0(\alpha-\beta)d\beta-2A_{\rho\sigma}\cos{\alpha}.
\end{aligned}
\end{equation*}
which thanks to \eqref{zeromean} reduces to
\begin{equation}\label{constantvel}
\begin{aligned}
    N_0(\alpha)&=-A_{\rho\sigma}\sin{\alpha}+A_{\rho\sigma} \sin{\alpha}=0,\\
    \tilde{\omega}_0(\alpha)&=-2A_{\rho\sigma}\cos{\alpha}.
\end{aligned}
\end{equation}
That is, for any value of the physical parameters, $f\equiv0$ (with corresponding $c_0(t)=(0,A_{\rho\sigma} t)$ \eqref{c_def}) gives a solution, which we call the trivial solution. We remark that, rescaling back the time, the dimensionless velocity $A_{\rho\sigma}$ corresponds to the true velocity $A_\rho$.
Next, we linearize the equations \eqref{vorticity_tilde} and \eqref{fequation1} around the trivial solution,
\begin{equation}\label{f_system}
\begin{aligned}
\partial_t f(\alpha)&=N_1(f)(\alpha)+N_{\geq2}(f)(\alpha),\\
\tilde{\omega}(\alpha)&=\tilde{\omega}_0(\alpha)+\tilde{\omega}_1(\alpha)+\tilde{\omega}_{\geq2}(\alpha),
\end{aligned}
\end{equation}
for which we compute the linear operators
\begin{equation}
    \begin{aligned}
        N_1(f)(\alpha)&=\frac{d}{d\varepsilon}N(\varepsilon f)(\alpha)\Big|_{\varepsilon=0},\\
        \tilde{\omega}_1(f)(\alpha)&=\frac{d}{d\varepsilon}\tilde{\omega}(\varepsilon f)(\alpha)\Big|_{\varepsilon=0}.
    \end{aligned}
\end{equation}
\begin{lemma}[Linearization] Let $f\in\mathcal{F}^{3,1}$ and $c(t)$ defined via \eqref{c_def}. The linear operators $N_1(f)$ and $\tilde{\omega}_1(f)$ are given by
    \begin{equation}\label{N_lin}
    \begin{aligned}        
        N_1(f)(\alpha)&=A_{\rho\sigma}(1-A_\mu)\Big(|\partial_\alpha|\big(f(\alpha)\sin(\alpha)\big)+\partial_\alpha\big(f(\alpha)\cos(\alpha)\big)\Big)\\
        &\quad-\big(|\partial_\alpha|^3f(\alpha)-|\partial_\alpha|f(\alpha)\big)-\dot{c}_{1}\cos(\alpha)-(\dot{c}_{2}-A_{\rho\sigma})\sin(\alpha),
 \end{aligned}
\end{equation}
    \begin{equation}\label{w_lin}
    \begin{aligned}        
        \tilde{\omega}_1(f)(\alpha)&=2A_\mu A_{\rho\sigma}\Big(\partial_\alpha\big(f(\alpha)\sin{(\alpha)}\big)-|\partial_\alpha|(f(\alpha)\cos(\alpha))\Big)-2\big(\partial_\alpha^3f(\alpha)+\partial_\alpha f(\alpha)\big)\\
&\quad-2A_{\rho\sigma}\partial_\alpha\big(f(\alpha)\sin(\alpha)\big).
    \end{aligned}
\end{equation}
\end{lemma}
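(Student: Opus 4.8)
The plan is to compute the two Gâteaux derivatives directly from the formulas \eqref{vorticity_tilde} and \eqref{fequation1}, substituting $f\mapsto\varepsilon f$, differentiating in $\varepsilon$, and evaluating at $\varepsilon=0$, using the trivial-solution data \eqref{constantvel} as the base point. I would first linearize the vorticity equation \eqref{vorticity_tilde}, since $\tilde\omega_1$ feeds into $N_1$ through the integral terms $I_2,I_3$. Write $\tilde\omega(\varepsilon f)=\tilde\omega_0+\varepsilon\tilde\omega_1(f)+O(\varepsilon^2)$. In the curvature term, from \eqref{curvaturef} one gets $K(\varepsilon f)=1+\varepsilon(-\partial_\alpha^2 f-f)+O(\varepsilon^2)$, so $2\partial_\alpha K$ contributes $-2\varepsilon(\partial_\alpha^3 f+\partial_\alpha f)$; the gravity term $-2A_{\rho\sigma}\partial_\alpha((1+\varepsilon f)\sin\alpha)$ contributes $-2A_{\rho\sigma}\partial_\alpha(f\sin\alpha)$ at order $\varepsilon$ (the order-$1$ piece $-2A_{\rho\sigma}\cos\alpha$ being absorbed into $\tilde\omega_0$). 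The Muskat self-interaction term $2A_\mu\mathcal D[f](\tilde\omega)$ requires care: one must linearize both the kernel in \eqref{Doper_f} around $f=0$ \emph{and} the argument $\tilde\omega$. At $f=0$ the kernel in the first integral of \eqref{Doper_f} collapses to $\tfrac1{2\pi}\,\tfrac{\sin(\beta/2)}{1}\cdot\tfrac{1}{2\sin(\beta/2)}=\tfrac1{4\pi}$ acting on $\tilde\omega_0(\alpha-\beta)$, which integrates to zero by \eqref{zeromean}; the second integral vanishes at $f=0$ because it carries a factor $\partial_\alpha f$. So the $\varepsilon$-derivative of $2A_\mu\mathcal D[f](\tilde\omega)$ splits as $2A_\mu\mathcal D[0](\tilde\omega_1)+2A_\mu(\partial_\varepsilon\mathcal D[\varepsilon f])[0](\tilde\omega_0)$; the first term is again mean-zero smoothing of $\tilde\omega_1$ and in fact $\mathcal D[0](g)=\tfrac14\cdot 0$ — more precisely one checks $\mathcal D[0](g)(\alpha)=\tfrac1{4\pi}\int_\T g(\alpha-\beta)d\beta=0$ for mean-zero $g$ — so only the kernel-variation term acting on $\tilde\omega_0=-2A_{\rho\sigma}\cos\alpha$ survives, and a residue/Fourier computation with $\Delta_\beta f$ producing $|\partial_\alpha|$ and $\partial_\alpha$ operators yields exactly $2A_\mu A_{\rho\sigma}(\partial_\alpha(f\sin\alpha)-|\partial_\alpha|(f\cos\alpha))$. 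Collecting the three contributions gives \eqref{w_lin}.

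For $N_1$, I would linearize each $I_j$. In $I_1$, since $\dot c$ is already $O(\varepsilon)$ in the perturbation except for the $A_{\rho\sigma}$-drift in $\dot c_2$ (recall \eqref{c_def}: $\dot c_1,\dot c_2-A_{\rho\sigma}$ are linear in $f$), the term $\tfrac1{1+f}\partial_\alpha((1+f)\dot c\cdot\tau)$ linearizes to $\partial_\alpha(\dot c\cdot\tau)$ plus the $\varepsilon$-variation coming from the explicit $A_{\rho\sigma}$ in $\dot c_2$ interacting with $\tau(\alpha)=(-\sin\alpha,\cos\alpha)$; with $\dot c_2=A_{\rho\sigma}$ at leading order one gets $\partial_\alpha(A_{\rho\sigma}\cos\alpha)+\partial_\alpha(\dot c_1(-\sin\alpha))+\dots$, but the cleanest route is to keep $\dot c_1,\dot c_2$ symbolic, extract $-\dot c_1\cos\alpha-(\dot c_2-A_{\rho\sigma})\sin\alpha$ after the $\varepsilon=0$ evaluation, and verify the $A_{\rho\sigma}$-drift piece cancels against contributions from $I_2,I_3$ exactly as in \eqref{constantvel}. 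For $I_2$ and $I_3$, at $f=0$ the kernels simplify (in $I_2$ the numerator becomes $(1)(1)\cos(\beta/2)$, denominator $1$, so $I_2(0)=-\tfrac1{2\pi}\pv\int\tfrac{\cos(\beta/2)}{2\sin(\beta/2)}\tilde\omega_0(\alpha-\beta)d\beta$, which is the $N_0$ contribution that already cancelled; $I_3(0)=0$ by the $\partial_\alpha f$ factor). Differentiating, $I_2$ and $I_3$ contribute $\tfrac1{2\pi}$-type singular integrals of $\tilde\omega_1$ — which, since $\tilde\omega_1$ by \eqref{w_lin} contains $-2(\partial_\alpha^3 f+\partial_\alpha f)$ — generate the leading dissipative term: $-\tfrac1{2\pi}\pv\int\tfrac{\cos(\beta/2)}{2\sin(\beta/2)}\tilde\omega_1(\alpha-\beta)d\beta$ with the $-2(\partial^3+\partial)f$ part of $\tilde\omega_1$ gives, via $\tfrac1{2\pi}\pv\int\tfrac{\cos(\beta/2)}{2\sin(\beta/2)}g(\alpha-\beta)d\beta = \tfrac12\mathcal H g$ and $\mathcal H\partial_\alpha=|\partial_\alpha|$, the term $|\partial_\alpha|^3 f - |\partial_\alpha| f$; the $2A_\mu A_{\rho\sigma}(\cdots)$ and $-2A_{\rho\sigma}\partial_\alpha(f\sin\alpha)$ parts of $\tilde\omega_1$, together with the kernel-variation of $I_2,I_3$ acting on $\tilde\omega_0$, reassemble into $A_{\rho\sigma}(1-A_\mu)(|\partial_\alpha|(f\sin\alpha)+\partial_\alpha(f\cos\alpha))$. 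Summing the $I_j$ contributions produces \eqref{N_lin}.

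The main obstacle is the bookkeeping in the two self-interaction terms: one must correctly linearize the rational kernels in \eqref{Doper_f}, \eqref{fequation1}, separating the piece where the kernel is varied (acting on the order-zero vorticity $\tilde\omega_0=-2A_{\rho\sigma}\cos\alpha$) from the piece where $\tilde\omega_1$ is inserted into the order-zero kernel, and then evaluating the resulting principal-value integrals. The key simplification is that every order-zero kernel here is (up to constants) either $\tfrac1{4\pi}$ — annihilating mean-zero inputs via \eqref{zeromean} — or $\tfrac1{2\pi}\tfrac{\cos(\beta/2)}{2\sin(\beta/2)}=\tfrac12\times$ (Hilbert-transform kernel), and that the kernel variations bring down a factor $\Delta_\beta f$ or $\partial_\alpha f$ whose principal-value integral against $\sin\alpha,\cos\alpha$ reduces to $\partial_\alpha$ and $|\partial_\alpha|$ applied to $f\sin\alpha$, $f\cos\alpha$ by elementary Fourier identities. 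One also tracks the $A_{\rho\sigma}$-drift term in $\dot c_2$ carefully to confirm the $-(\dot c_2-A_{\rho\sigma})\sin\alpha$ form and that no stray $A_{\rho\sigma}\sin\alpha$ survives, consistent with \eqref{constantvel}. A short remark that each of these manipulations is legitimate for $f\in\mathcal F^{3,1}$, since then $\partial_\alpha^3 f$ lies in the Wiener algebra and all integrands are absolutely convergent, completes the argument.
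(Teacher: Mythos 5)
Your proposal is correct and follows essentially the same route as the paper's proof: linearize the curvature and $\mathcal{D}[f]$ around the trivial data $\tilde\omega_0=-2A_{\rho\sigma}\cos\alpha$, use the mean-zero property \eqref{zeromean} to annihilate the order-zero kernels, then linearize each $I_j$ (with $I_3$ contributing no linear term) and recombine through the identities $\tfrac1{2\pi}\pv\int_{\mathbb{T}}\tfrac{\cos(\beta/2)}{2\sin(\beta/2)}g(\alpha-\beta)\,d\beta=\tfrac12\mathcal{H}g$ and $\mathcal{H}\partial_\alpha=|\partial_\alpha|$, exactly as the paper does. The only loose point is attributing part of the $A_{\rho\sigma}(1-A_\mu)$ assembly to ``kernel variations of $I_2,I_3$'': for $I_2$ the first-order variation of the rational kernel cancels identically and for $I_3$ it vanishes by mean zero, so the extra contribution actually comes from varying the $1/(1+f)$ prefactor, which yields $\tfrac12 f\,\mathcal{H}\tilde\omega_0=-A_{\rho\sigma}f\sin\alpha$ and, together with the $I_1$ terms, gives the stated coefficient — a bookkeeping detail that does not affect the validity of your approach.
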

\begin{proof}
    
First, we linearize the curvature \eqref{curvaturef}, 
\begin{equation}\label{Klin}
K_1(f)(\alpha)=\frac{d}{d\varepsilon}K(\varepsilon f)(\alpha)\Big|_{\varepsilon=0}=-\partial_\alpha^2f(\alpha)-f(\alpha),
\end{equation}
Next, we linearize $\mathcal{D}[f](\tilde{\omega})$ \eqref{Doper_f},
\begin{multline*}
    \mathcal{D}_1[f](\tilde{\omega})(\alpha)
    \\
    =
    \frac{1}{2\pi}\frac{d}{d\varepsilon}\Big(\pv\!\int_{\mathbb{T}}\!\!\frac{(1\!+\!f(\alpha))(1\!+\!f(\alpha\!-\!\beta))\sin{\left(\beta/2\right)}\!+\!(1\!+\!f(\alpha))\Delta_{\beta} f(\alpha)}{(\Delta_\beta f(\alpha))^2\!+\!(1\!+\!f(\alpha))(1\!+\!f(\alpha\!-\!\beta))}\frac{\tilde{\omega}(\alpha\!-\!\beta)}{2\sin{\left(\beta/2\right)}}d\beta\Big)\Big|_{\varepsilon=0}\\
\quad-\frac{1}{2\pi}\frac{d}{d\varepsilon}\Big(\pv\int_{\mathbb{T}}\frac{(1+f(\alpha-\beta))\partial_{\alpha}f(\alpha)\cos{\left(\beta/2\right)}}{(\Delta_\beta f(\alpha))^2+(1+f(\alpha))(1+f(\alpha-\beta))}\frac{\tilde{\omega}(\alpha-\beta)}{2\sin{\left(\beta/2\right)}}d\beta\Big)\Big|_{\varepsilon=0}.
\end{multline*}
We obtain that
\begin{equation*}
\begin{aligned}
\mathcal{D}_1[f](\tilde{\omega})(\alpha)&=\frac{1}{2\pi}\pv\int_{\mathbb{T}}\Delta_\beta f(\alpha)\frac{\tilde{\omega}_0(\alpha\!-\!\beta)}{2\sin{\left(\beta/2\right)}}d\beta+\frac{1}{2\pi}\pv\int_{\mathbb{T}}\frac{\tilde{\omega}_1(\alpha\!-\!\beta)}{2}d\beta\\
&\quad-\frac{1}{2\pi}\pv\int_{\mathbb{T}}\partial_{\alpha}f(\alpha)\cos{\left(\beta/2\right)}\frac{\tilde{\omega}_0(\alpha-\beta)}{2\sin{\left(\beta/2\right)}}d\beta,
\end{aligned}
\end{equation*}
and, since $\tilde{\omega}$ has mean zero, it simplifies
\begin{equation*}
\begin{aligned}
\mathcal{D}_1[f](\tilde{\omega})(\alpha)&=\frac{1}{2\pi}\pv\int_{\mathbb{T}}\Delta_\beta f(\alpha)\frac{\tilde{\omega}_0(\alpha\!-\!\beta)}{2\sin{\left(\beta/2\right)}}d\beta-\partial_{\alpha}f(\alpha)\mathcal{H}\tilde{\omega}_0(\alpha).
\end{aligned}
\end{equation*}
We substitute $\tilde{\omega}_0$ \eqref{constantvel} to get
\begin{equation}\label{Dlin}
\begin{aligned}
\mathcal{D}_1[f](\tilde{\omega})(\alpha)&=-\frac{A_{\rho\sigma}}{\pi}\pv\int_{\mathbb{T}}\frac{(f(\alpha)-f(\alpha-\beta))\cos{(\alpha-\beta)}}{4\sin^2{\left(\beta/2\right)}}d\beta+2A_{\rho\sigma}\partial_{\alpha}f(\alpha)\mathcal{H}\cos(\alpha)\\
&=A_{\rho\sigma}\Big(f(\alpha)\cos(\alpha)-|\partial_\alpha|(f(\alpha)\cos(\alpha))+\partial_{\alpha}f(\alpha)\sin(\alpha)\Big)\\
&=A_{\rho\sigma}\Big(\partial_\alpha\big(f(\alpha)\sin{(\alpha)}\big)-|\partial_\alpha|(f(\alpha)\cos(\alpha))\Big).
\end{aligned}
\end{equation}
Then, \eqref{Dlin} and \eqref{Klin} readily give the linearization \eqref{w_lin} of $\tilde{\omega}$ \eqref{vorticity_tilde}.
Finally, we compute $N_1(f)$. We linearize each $I_j(f)$, $j=1,2,3$, in the splitting for $N(f)$ \eqref{fequation1}:
\begin{equation*}
    \begin{aligned}
        N_1(f)(\alpha)&=\sum_{j=1}^3\frac{d}{d\varepsilon}I_j(\varepsilon f)(\alpha)\Big|_{\varepsilon=0}.
    \end{aligned}
\end{equation*}
Thanks to \eqref{zeromean}, the term $I_3(f)$ contains no linear terms. We find that
\begin{equation*}
    \begin{aligned}
        \frac{d}{d\varepsilon}I_1(\varepsilon f)(\alpha)\Big|_{\varepsilon=0}&=-\dot{c}_{1}\cos(\alpha)-(\dot{c}_{2}-A_{\rho\sigma}){A_\sigma}\sin(\alpha)+A_{\rho\sigma}\big(\partial_\alpha(\cos(\alpha)f(\alpha))+f(\alpha)\sin(\alpha)\big),
    \end{aligned}
\end{equation*}
and
\begin{equation*}
    \begin{aligned}
        \frac{d}{d\varepsilon}I_2(\varepsilon f)(\alpha)\Big|_{\varepsilon=0}&=-\frac{1}{2}\mathcal{H}(\tilde{\omega}_1)(\alpha)+\frac{1}{2}f(\alpha)\mathcal{H}(\tilde{\omega}_0)(\alpha).
    \end{aligned}
\end{equation*}
Therefore,
\begin{equation*}
    \begin{aligned}
        N_1(\alpha)&=-\dot{c}_{1}\cos(\alpha)-(\dot{c}_{2}-A_{\rho\sigma})\sin(\alpha)\!+\!A_{\rho\sigma}\cos(\alpha)\partial_\alpha f(\alpha)\!-\!\frac{1}{2}\mathcal{H}(\tilde{\omega}_1)(\alpha)+\frac{f(\alpha)}{2}\mathcal{H}(\tilde{\omega}_0)(\alpha).
    \end{aligned}
\end{equation*}
It follows from \eqref{constantvel} and \eqref{w_lin} that
\begin{equation*}
    \begin{aligned}
\mathcal{H}(\tilde{\omega}_0)(\alpha)&=-2A_{\rho\sigma}\sin(\alpha)\\
\mathcal{H}(\tilde{\omega}_1)(\alpha)&=2A_\mu A_{\rho\sigma}\Big(|\partial_\alpha|\big(f(\alpha)\sin(\alpha)\big)+\partial_\alpha\big(f(\alpha)\cos(\alpha)\big)\Big)+2\big(|\partial_\alpha|^3f(\alpha)-|\partial_\alpha|f(\alpha)\big)\\
&\quad-2A_{\rho\sigma}|\partial_\alpha|\big(f(\alpha)\sin(\alpha)\big),
    \end{aligned}
\end{equation*}
thus, introducing these expressions back into $N_1(\alpha)$,
\begin{equation*}
    \begin{aligned}
        N_1(\alpha)&=-\dot{c}_{1}\cos(\alpha)-(\dot{c}_{2}-A_{\rho\sigma})\sin(\alpha)+A_{\rho\sigma}\cos(\alpha)\partial_\alpha f(\alpha)\\
        &\quad-A_\mu A_{\rho\sigma}\Big(|\partial_\alpha|\big(f(\alpha)\sin(\alpha)\big)+\partial_\alpha\big(f(\alpha)\cos(\alpha)\big)\Big)-\big(|\partial_\alpha|^3f(\alpha)-|\partial_\alpha|f(\alpha)\big)\\
&\quad+A_{\rho\sigma}|\partial_\alpha|\big(f(\alpha)\sin(\alpha)\big)-A_{\rho\sigma}\sin(\alpha)f(\alpha),
    \end{aligned}
\end{equation*}
which concludes the result.

\end{proof}

We next decompose the linear system $\partial_t g(\alpha)=N_1(g)(\alpha)$ into frequencies. Let $g(\alpha)$ a real-valued function and let $g(\alpha)=\sum_{k\in\mathbb{Z}}\widehat{g}(k)e^{ik\alpha}$, with $\widehat{g}(k)$ the standard Fourier coefficient \eqref{Fourier_coef}. Then, substitution in \eqref{N_lin} and straightforward computations give that

\begin{equation*}
    \begin{aligned}
        \frac{d}{dt}\widehat{g}(k)&=-\frac12\delta_1(k)\big(\dot{c}_1-i(\dot{c}_2-A_{\rho\sigma})\big)-\frac12\delta_{-1}(k)\big(\dot{c}_1+i(\dot{c}_2-A_{\rho\sigma})\big)\\
        &\quad-|k|\big(|k|^2-1\big)\widehat{g}(k)\\
        &\quad-i(1-A_\mu)A_{\rho\sigma}\Big(\frac{|k|-k}{2}\widehat{g}(k-1)-\frac{|k|+k}{2}\widehat{g}(k+1)\Big).
    \end{aligned}
\end{equation*}
Since $g(\alpha)$ is real, $\widehat{g}(-k)=\overline{\widehat{g}(k)}$ and we only need to study $k\geq0$:
\begin{equation*}
    \begin{aligned}
        \frac{d}{dt} \widehat{g}(k)&=-k\big(k^2-1\big)\widehat{g}(k)+i(1-A_\mu)A_{\rho\sigma}k\widehat{g}(k+1)\\
        &\quad-\frac{\delta_{1}(k)}{2}\big(\dot{c}_1-i(\dot{c}_2-A_{\rho\sigma})\big).
    \end{aligned}
\end{equation*}
Plugging in the specific expression of $c(t)$ in \eqref{c_def} we obtain
\begin{equation*}
    \begin{aligned}
        \frac{d}{dt} \widehat{g}(k)&=-k\big(k^2-1+\delta_1(k)\big)\widehat{g}(k)+i(1-A_\mu)A_{\rho\sigma}k\widehat{g}(k+1).
    \end{aligned}
\end{equation*}
For $k\geq1$, this is an upper triangular system with eigenvalues $-k(k^2-1+\delta_1(k))$. 
We write the system as follows
\begin{equation}\label{linearZ}
    \begin{aligned}
    \dot{z}_k=\sum_{j\geq1}M_{k,j}z_k,\quad k\geq1,
    \end{aligned}
\end{equation}
where we denote
\begin{equation*}
    M_{k,j}=\left\{
    \begin{aligned}
    &-a_k,\quad j=k,\\
    &b_k,\hspace{0.8cm} j=k+1,\\
    &0,\hspace{1.2cm} \text{otherwise},
    \end{aligned}\right.
\end{equation*}
with 
\begin{equation}\label{coeffs}
    a_k= k(k^2-1+\delta_1(k)),\quad b_k=i(1-A_\mu)A_{\rho\sigma}k,
\end{equation}
The diagonalization of this system is done in Proposition 7.1, Lemma 7.2, and Remark 7.3 of \cite{GancedoGarciaJuarezPatelStrain23}, whose statements we include here for completeness.
\begin{prop}[Diagonalization]\label{diagonalization}
Consider $z=(z_k)_{k\geq1}$, $y=(y_k)_{k\geq1}\in \ell^1$ and the linear operator \begin{equation*}
    \begin{aligned}
    S^{-1}:\ell^1&\rightarrow \ell^1\\
    z&\mapsto y=S^{-1}z,
    \end{aligned}
\end{equation*}
defined by
    \begin{equation*}
        y_k=\sum_{j\geq1}S^{-1}_{k,j}z_j,
    \end{equation*}
    with
    \begin{equation}\label{Sm1}
        S^{-1}_{k,j}=
        \begin{cases}
        (-1)^{j-k}\prod_{l=1}^{j-k} \frac{b_{k-1+l}}{a_k-a_{k+l}},& j\geq k\geq2,\\
        1,&j=k=1,\\
        0,& \text{otherwise}.
        \end{cases}
    \end{equation}
Then, the inverse operator $S$ is given by
    \begin{equation*}
        S_{k,j}=
        \begin{cases}
        \prod_{l=1}^{j-k} \frac{b_{k-1+l}}{a_{k-1+l}-a_j},&j\geq k\geq2,\\
        1,&j=k=1,\\
        0,& \text{otherwise}.
        \end{cases}
    \end{equation*}
Moreover, the linear operator $S^{-1}$ diagonalizes the system \eqref{linearZ} as follows
\begin{equation*}
    \dot{y}_k=-a_ky_k,\quad k\geq1.
\end{equation*}
\end{prop}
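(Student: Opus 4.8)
The plan is to exploit the bidiagonal structure of $M$. First I would check that the rows of $S^{-1}$ are left eigenvectors of $M$ and the columns of $S$ are right eigenvectors, each by a one-line recursion; from the former the diagonalization of \eqref{linearZ} is immediate, and combining the two identities shows that $S$ is the inverse of $S^{-1}$. The only genuinely technical point is a quantitative estimate guaranteeing that $S^{-1}$ and $S$ are bounded on $\ell^1$, so that all the series involved converge.

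To begin, note from \eqref{coeffs} that the diagonal entries $-a_k$ of $M$ are pairwise distinct, since $a_k=k(k^2-1+\delta_1(k))$ is strictly increasing for $k\ge1$ (one has $a_1=1<a_2=6<a_3=24<\cdots$, with $a_{k+1}-a_k\simeq 3k^2$); in particular the denominators $a_k-a_{k+l}$ and $a_{k-1+l}-a_j$ in \eqref{Sm1} never vanish and $S^{-1}$, $S$ are well defined. Writing $D=\operatorname{diag}(-a_k)_{k\ge1}$ and using only $M_{k,k}=-a_k$, $M_{k,k+1}=b_k$, $M_{k,j}=0$ otherwise, I would verify the identities $MS=SD$ and $S^{-1}M=DS^{-1}$. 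Indeed $(MS)_{k,j}=-a_kS_{k,j}+b_kS_{k+1,j}$, and since the product in \eqref{Sm1} (together with $S_{k,j}=0$ for $k>j$ and $S_{j,j}=1$) satisfies $S_{k,j}=\frac{b_k}{a_k-a_j}S_{k+1,j}$, this equals $-a_jS_{k,j}$; likewise $(S^{-1}M)_{i,j}=-a_jS^{-1}_{i,j}+b_{j-1}S^{-1}_{i,j-1}$, and the product in \eqref{Sm1} satisfies $S^{-1}_{i,j}=\frac{b_{j-1}}{a_j-a_i}S^{-1}_{i,j-1}$, so this equals $-a_iS^{-1}_{i,j}$.

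Granting these two identities, the diagonalization follows at once: if $z(t)\in\ell^1$ solves \eqref{linearZ} and $y=S^{-1}z$, then $\dot y=S^{-1}\dot z=S^{-1}Mz=DS^{-1}z=Dy$, i.e. $\dot y_k=-a_ky_k$. That $S$ is the inverse of $S^{-1}$ I would obtain by combining the identities: $DS^{-1}S=S^{-1}MS=S^{-1}SD$, so $S^{-1}S$ commutes with the diagonal matrix $D$ with distinct entries, hence is diagonal, and its $k$-th entry equals $\sum_lS^{-1}_{k,l}S_{l,k}=S^{-1}_{k,k}S_{k,k}=1$ because the supports force $l=k$; thus $S^{-1}S=I$, and $SS^{-1}=I$ follows in the same manner (alternatively, $S=I+N$ with $N$ strictly upper triangular and rapidly decaying, hence boundedly invertible on $\ell^1$ with inverse $S^{-1}$). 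One may instead verify $\sum_kS^{-1}_{i,k}S_{k,j}=\delta_{ij}$ directly: the $b$-factors telescope to $\prod_{l=i}^{j-1}b_l$, independent of $k$, and what remains is a partial-fraction/divided-difference identity that vanishes for $i<j$, provable by a short induction on $j-i$.

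The remaining, and I expect the only delicate, point is boundedness on $\ell^1$. From \eqref{Sm1} and \eqref{coeffs}, $|S_{k,j}|=\prod_{l=k}^{j-1}\frac{|b_l|}{a_j-a_l}$ with $|b_l|\le 2|A_{\rho\sigma}|\,l$ and, for $1\le l<j$, $a_j-a_l\gtrsim (j-l)j^2$ (since $a_m\simeq m^3$ is increasing and convex), so each factor is $\lesssim\frac{|A_{\rho\sigma}|}{(j-l)j}$ and $|S_{k,j}|\lesssim\frac{|A_{\rho\sigma}|^{j-k}}{j^{j-k}(j-k)!}$; an analogous bound holds for $|S^{-1}_{i,j}|$. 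Summing over $j$ for fixed $k$ (resp.\ over $k$ for fixed $j$) gives finite bounds uniform in $k$ (resp.\ $j$), controlled by $e^{C|A_{\rho\sigma}|}$, so $S,S^{-1}\colon\ell^1\to\ell^1$ are bounded and every series above converges absolutely. The hard part is thus this quantitative decay estimate for the entries of $S$ and $S^{-1}$, together with the telescoping divided-difference identity if one proves $S^{-1}S=I$ by hand rather than by the commutation argument; the rest is routine.
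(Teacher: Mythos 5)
Your overall strategy---checking that the rows of $S^{-1}$ and the columns of $S$ satisfy the left/right eigenvector recursions for the bidiagonal matrix $M$, deducing $S^{-1}S=I$ and the diagonalization from these, and then proving entrywise factorial decay to get $\ell^1$ boundedness---is the natural one, and it is essentially the argument behind the result the paper relies on: the paper gives no proof of this proposition, it imports Proposition 7.1, Lemma 7.2 and Remark 7.3 of \cite{GancedoGarciaJuarezPatelStrain23}. Note also that the $\ell^1$ operator bounds you sketch are the content of the separate Lemma \ref{lemmaS}, so they are not strictly part of this proposition, although your estimate $|b_l|\lesssim |A_{\rho\sigma}|\,l$, $a_j-a_l\geq (j-l)j^2$, leading to $e^{C|A_{\rho\sigma}|}$-type column sums, is the right kind of bound.

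There is, however, a concrete gap at the index $k=1$ which you pass over. Your two recursions, $S_{k,j}=\frac{b_k}{a_k-a_j}S_{k+1,j}$ and $S^{-1}_{i,j}=\frac{b_{j-1}}{a_j-a_i}S^{-1}_{i,j-1}$, do hold for the product formula when $j\geq k\geq 2$, but they are false for the matrices as literally defined in \eqref{Sm1}, which set $S^{-1}_{1,j}=S_{1,j}=0$ for $j\geq 2$ (e.g. $S^{-1}_{1,2}=0$ while $\frac{b_1}{a_2-a_1}S^{-1}_{1,1}\neq0$). Since the first equation of \eqref{linearZ} is $\dot z_1=-a_1z_1+b_1z_2$ with $b_1=i(1-A_\mu)A_{\rho\sigma}\neq0$ in general, one gets $(S^{-1}M)_{1,2}=b_1\neq 0=(DS^{-1})_{1,2}$ and $(MS)_{1,j}=b_1S_{2,j}\neq 0=(SD)_{1,j}$, i.e. $(S^{-1}MS)_{1,2}=b_1$; with the literal zero first row the system is \emph{not} diagonalized (indeed the stated conclusion $\dot y_1=-a_1y_1$ with $y_1=z_1$ would force $b_1z_2\equiv0$). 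The resolution is to use the same product formula on the row $k=1$ as well, i.e. to read the first case of \eqref{Sm1} as $j\geq k\geq1$: the denominators $a_1-a_{1+l}$ never vanish because $a_k$ from \eqref{coeffs} is strictly increasing ($a_1=1<a_2=6<\cdots$, thanks to the $\delta_1$ shift coming from the choice of $c(t)$), and with that reading your one-line recursions hold verbatim for all rows, after which the commutation argument for $S^{-1}S=I$ and the $\ell^1$ bounds close the proof exactly as you outline. As written, though, your proof asserts identities that fail in the first row of the matrices actually defined in the statement, so you must flag and handle this boundary case explicitly rather than treat it as covered by the generic recursion.
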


We remark that since $\prod_{l=1}^{0}:=1$ by definition then $S_{k,k} = S^{-1}_{k,k} =1$ when $j=k$ for all $k\ge 1$.   We also have the following lemma which gives uniform bounds for the operators $S$ and $S^{-1}$.

\begin{lemma}\label{lemmaS}
    The operator norms in $\ell^1$ of the linear operators $S$ and $S^{-1}$ satisfy the following bounds
    \begin{equation}\label{CSbounds}
        \begin{aligned}
        \|S^{-1}\|_{\ell^1\rightarrow \ell^1}\leq C_S,\qquad  \|S\|_{\ell^1\rightarrow \ell^1}\leq C_S,
        \end{aligned}
    \end{equation}
with $C_S = C_S\big(|A_{\rho\sigma}|(1-A_\mu)\big)$.
\end{lemma}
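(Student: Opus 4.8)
The plan is to bound the $\ell^1 \to \ell^1$ operator norms of $S$ and $S^{-1}$ by estimating the column sums, since for an operator acting on $\ell^1$ by $(Tz)_k = \sum_j T_{k,j} z_j$ the operator norm equals $\sup_j \sum_k |T_{k,j}|$. For $S^{-1}$, fix a column index $j \geq 2$; the nonzero entries are $S^{-1}_{k,j}$ for $2 \leq k \leq j$, given by $(-1)^{j-k}\prod_{l=1}^{j-k}\frac{b_{k-1+l}}{a_k - a_{k+l}}$. Using $|b_m| = |A_{\rho\sigma}|(1-A_\mu)\, m$ and $a_m = m(m^2-1+\delta_1(m))$, which for $m \geq 2$ is simply $m(m^2-1)$, I would first establish a clean lower bound on the spectral gaps $a_{k+l} - a_k$ appearing in the denominators. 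Since $a_m$ is strictly increasing and roughly cubic, one has $a_{k+l} - a_k \geq c\, l\, k^2$ (and in fact $\gtrsim l(k+l)^2 + \dots$) for $k \geq 2$, $l \geq 1$; I would make this quantitative, e.g. $a_{k+l} - a_k = (k+l)^3 - (k+l) - k^3 + k = l\big((k+l)^2 + (k+l)k + k^2 - 1\big) \geq l(3k^2-1) \geq 2lk^2$ for $k \geq 2$. Meanwhile each numerator factor $|b_{k-1+l}| \leq |A_{\rho\sigma}|(1-A_\mu)(k+l)$.

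With these bounds, each factor in the product satisfies $\left|\frac{b_{k-1+l}}{a_k - a_{k+l}}\right| \leq \frac{|A_{\rho\sigma}|(1-A_\mu)(k+l-1)}{2lk^2} \leq \frac{C}{l}\cdot\frac{k+l}{k^2}$ with $C = C(|A_{\rho\sigma}|(1-A_\mu))$. Writing $p = j-k$ (so $p$ ranges from $0$ to $j-2$ as $k$ ranges down from $j$), the $k$-th entry of column $j$ is bounded by $\prod_{l=1}^{p}\frac{C}{l}\cdot\frac{k+l}{k^2} = \frac{C^p}{p!}\prod_{l=1}^p \frac{k+l}{k^2}$. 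I expect $\prod_{l=1}^p (k+l) = \frac{(k+p)!}{k!} \leq (k+p)^p$ and $\prod_{l=1}^p k^{-2} = k^{-2p}$, so the entry is $\lesssim \frac{C^p}{p!}\frac{(k+p)^p}{k^{2p}} = \frac{C^p}{p!}\left(\frac{k+p}{k^2}\right)^p = \frac{C^p}{p!}\left(\frac{j}{k^2}\right)^p$ (using $k+p = j$). Summing over $k$ from $2$ to $j$, i.e. over $p$ from $0$ to $j-2$: the dominant contribution is $p=O(1)$ where the factor is $O(1)$, and for larger $p$ the $\frac{1}{p!}$ and the smallness of $\frac{j}{k^2}$ (note $k^2 \geq k \geq j-p$, so this is not automatically small, which is the delicate point) must be used. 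One clean way around the delicacy: bound $\frac{j}{k^2} \leq \frac{j}{k}$ and use $\sum_{k=2}^{j}\frac{1}{p!}\left(\frac{Cj}{k}\right)^p$-type sums, or more robustly compare the entries to the terms of a convergent series by a telescoping/ratio argument directly on the products $S^{-1}_{k,j}$ — show $|S^{-1}_{k,j}| / |S^{-1}_{k+1,j}|$ or the ratio in $k$ down a column is summable. The same scheme works for $S_{k,j} = \prod_{l=1}^{j-k}\frac{b_{k-1+l}}{a_{k-1+l}-a_j}$, where now the denominators are $a_j - a_{k-1+l} \geq a_j - a_{j-1} \gtrsim j^2$ for each $l \leq j-k$, giving the even cleaner per-factor bound $\left|\frac{b_{k-1+l}}{a_{k-1+l}-a_j}\right| \lesssim \frac{j}{j^2} = \frac{1}{j}$, hence $|S_{k,j}| \lesssim \frac{C^{j-k}}{j^{j-k}}$ roughly, whose column sum over $k$ is plainly bounded by $\sum_{p\geq0} (C/j)^p \leq 2$ once $j$ is large, with finitely many small $j$ handled separately.

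The main obstacle is the uniformity of the bound over \emph{all} columns $j$ simultaneously: the individual spectral gaps grow, but the number of factors in the product also grows with $j-k$, and near the diagonal top of a column (small $p$) the per-factor bounds are only $O(1/l)$ rather than genuinely small, so the product $\prod_{l=1}^p \frac{C}{l}\frac{k+l}{k^2}$ must be controlled by the combination of the $\frac{1}{p!}$ Gaussian-type decay and the ratio $\frac{k+l}{k^2}$, carefully tracking that $k$ itself can be as small as $2$. The cleanest resolution is to prove, for each fixed $j$, that the sequence $(|S^{-1}_{k,j}|)_{k=2}^j$ is dominated termwise by $c_j \rho^{j-k}$ for a fixed $\rho < 1$ depending only on $|A_{\rho\sigma}|(1-A_\mu)$ once $k \geq k_0$, handle the finitely many small-$k$ entries (bounded number depending only on the parameters) by crude estimates, and sum the geometric tail — yielding $C_S$ depending only on $|A_{\rho\sigma}|(1-A_\mu)$ as claimed. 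Since this is exactly Lemma 7.2 (and Proposition 7.1, Remark 7.3) of \cite{GancedoGarciaJuarezPatelStrain23}, I would ultimately just cite that computation, but the sketch above is the argument underlying it.
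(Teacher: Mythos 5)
The paper offers no independent proof of this lemma: it is quoted from Proposition 7.1, Lemma 7.2 and Remark 7.3 of \cite{GancedoGarciaJuarezPatelStrain23}, so your bottom line (cite that computation) coincides with what the paper does, and your overall strategy --- bounding the $\ell^1\to\ell^1$ norms by the supremum over columns of the column sums of the explicit products --- is the right skeleton. Your treatment of $S$ is fine as sketched: every denominator satisfies $a_j-a_{k-1+l}\geq a_j-a_{j-1}=3j(j-1)$, each factor is $O(B/j)$ with $B:=|A_{\rho\sigma}|(1-A_\mu)$, and the finitely many columns with $j\lesssim B$ contain boundedly many entries, each a product of boundedly many factors, so crude bounds suffice there.

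For $S^{-1}$, however, the estimate you actually write down does not close, and the patch you propose does not repair it. The gap bound $a_{k+l}-a_k\geq 2lk^2$ is too lossy when $k$ is small and $l$ is large: with it, the entry $S^{-1}_{2,j}$ is only bounded by $\prod_{l=1}^{j-2}\frac{B(l+2)}{8l}\simeq (B/8)^{j-2}\,j^2$, which is unbounded in $j$ as soon as $B>8$, so the column sums are not controlled uniformly in the column index; and the suggestion to handle ``the finitely many small-$k$ entries by crude estimates'' fails precisely because those entries still contain $j-k\sim j$ factors, so no $j$-independent crude bound follows from your per-factor estimate (unlike the small-$j$ columns of $S$, where the number of factors is also bounded). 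The repair is a sharper and in fact simpler gap bound: since $a$ is increasing and $a_m-a_{m-1}=3m(m-1)$ for $m\geq 3$ (and $k+l\geq3$ always), one has $a_{k+l}-a_k\geq a_{k+l}-a_{k+l-1}=3(k+l)(k+l-1)$, hence each factor obeys $\bigl|b_{k-1+l}\bigr|/(a_{k+l}-a_k)\leq B(k+l-1)/\bigl(3(k+l)(k+l-1)\bigr)=B/\bigl(3(k+l)\bigr)$, so that $|S^{-1}_{k,j}|\leq (B/3)^{j-k}\,k!/j!\leq (B/3)^{j-k}/(j-k)!$ and every column sum is at most $e^{B/3}$, uniformly in $j$, which yields $C_S=C_S(B)$ as claimed; with this replacement your argument is complete without any case analysis in $k$.
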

\begin{remark}
The results in Proposition \ref{diagonalization} and Lemma \ref{lemmaS} also hold in the space $\ell^1$ with weight $e^{\nu(t)|k|}|k|^s$ for any $s\ge 0$:
\begin{equation*}
    \|z\|_{\fsonenu}
    \leq 
     C_S\|y\|_{\fsonenu},
     \quad 
         \|y\|_{\fsonenu}
    \leq 
     C_S\|z\|_{\fsonenu}.
\end{equation*}
\end{remark}

\subsection{Fixed point formulation}

Let us denote $\mathbb{P}:\mathcal{F}^{0,1}(\mathbb{T})\rightarrow Z$ the projection operator onto the space of functions with mean zero:
\begin{equation*}
    \begin{aligned}
        \mathbb{P}f(\alpha)=f(\alpha)-\widehat{f}(0).
    \end{aligned}
\end{equation*}
We then decompose the evolution system \eqref{fequation1}:
\begin{equation}\label{system1}
    \begin{aligned}
        \partial_t \widehat{f}(0)&=\widehat{N(f)}(0),\\
        \partial_t\mathbb{P}f(\alpha)&=\mathbb{P}N(f)(\alpha).
    \end{aligned}
\end{equation}
Next lemma shows that we can replace the first equation due to the fact that the fluids are incompressible. The conservation of the bubble area implies that, for all $t\geq0$,

\begin{equation*}
    \begin{aligned}
        \int_{\mathbb{T}}f(\alpha)d\alpha=-\frac{1}{2}\int_{\mathbb{T}}(f(\alpha))^2d\alpha.
    \end{aligned}
\end{equation*}
\begin{lemma} 
Let $f\in C((0,T];\mathcal{F}^{3,1})\cap C^1((0,T];\mathcal{F}^{0,1})$, $\omega\in C((0,T];Z)$ and $\|f\|_{\mathcal{F}^{0,1}}<1$. Then, $(f, \omega)$ is a solution to the equations \eqref{fequation1} and \eqref{vorticity_tilde}  if and only if it is a solution to 
\begin{equation}\label{system_used}
    \left\{\begin{aligned}
        \widehat{f}(0)&=-\frac{1}{4\pi}\int_{\mathbb{T}}(f(\alpha))^2d\alpha,\\
        \partial_t\mathbb{P}f(\alpha)&=\mathbb{P}N(f)(\alpha),\\
\tilde{\omega}(\alpha)&=2 A_\mu \mathcal{D}[f](\tilde{\omega})(\alpha)+2\partial_{\alpha}K(f)(\alpha)-2A_{\rho\sigma} \partial_\alpha\big((1+f(\alpha))\sin{\alpha}\big).
    \end{aligned}\right.
\end{equation}
\end{lemma}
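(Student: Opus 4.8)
The plan is to prove the equivalence by showing that the scalar equation $\widehat{f}(0) = -\frac{1}{4\pi}\int_{\mathbb{T}}(f(\alpha))^2\,d\alpha$ encodes exactly the conservation of the area of the bubble $D(t)$, which is automatic for solutions of the Muskat system because the fluids are incompressible and the interface is transported by the (divergence-free) velocity. Concretely, first I would recall that the enclosed area of the curve $z(\alpha,t)$ in \eqref{z_param} is $\frac12\int_{\mathbb{T}} z(\alpha,t)\cdot\partial_\alpha z(\alpha,t)^\perp\,d\alpha$, compute this in terms of $f$ using the polar form, and check it equals $\pi R^2(1 + 2\widehat{f}(0) + \frac{1}{2\pi}\int_{\mathbb{T}}f^2\,d\alpha)$ (the translation by $\tfrac{A_\rho}{A_{\rho\sigma}}c(t)$ drops out of the area). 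So ``area $= \pi R^2$ for all $t$'' is equivalent to the stated constraint on $\widehat{f}(0)$, and at $t=0$ it holds by the second condition in \eqref{f_cond}.

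Next I would show that any solution of \eqref{fequation1}--\eqref{vorticity_tilde} conserves this area, i.e. $\frac{d}{dt}\,\mathrm{Area}(D(t)) = 0$. Differentiating $\frac12\int_{\mathbb{T}} z\cdot\partial_\alpha z^\perp\,d\alpha$ in time and integrating by parts gives $\int_{\mathbb{T}} z_t\cdot\partial_\alpha z^\perp\,d\alpha$, and by the kinematic equation \eqref{eqcurve} this equals $\int_{\mathbb{T}} BR[z](\omega)\cdot\partial_\alpha z^\perp\,d\alpha$. One then observes $BR[z](\omega)$ is a divergence-free field (the normal component of $u$ is continuous across the interface and $\nabla\cdot u = 0$), so its flux through the closed curve vanishes; alternatively one can verify directly using the kernel structure of \eqref{eqBR} and $\int_{\mathbb{T}}\omega\,d\alpha = 0$ (i.e. \eqref{zeromean}) that the integral is zero. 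Either way, combining with the first paragraph, for a solution $f$ the quantity $\widehat{f}(0) + \frac{1}{4\pi}\int f^2$ is constant in time and zero at $t=0$, hence zero for all $t$; this is precisely the first line of \eqref{system_used}. Conversely, if $(f,\tilde\omega)$ solves \eqref{system_used}, then differentiating the algebraic constraint $\widehat{f}(0) = -\frac{1}{4\pi}\int f^2$ in time and using $\partial_t\mathbb{P}f = \mathbb{P}N(f)$ recovers $\partial_t\widehat{f}(0) = \widehat{N(f)}(0)$, so the full system \eqref{system1}, equivalently \eqref{fequation1}, holds; the vorticity equation is identical in both formulations.

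To make the converse rigorous one checks that differentiating $\widehat f(0) = -\frac{1}{4\pi}\int f^2\,d\alpha$ gives $\partial_t\widehat f(0) = -\frac{1}{2\pi}\int f\,\partial_t f\,d\alpha = -\frac{1}{2\pi}\int f\,\mathbb{P}N(f)\,d\alpha$, while on the other hand $\widehat{N(f)}(0) = \frac{1}{2\pi}\int N(f)\,d\alpha$, and one shows these agree \emph{granted the constraint holds at the given time}: this is exactly the statement that the time derivative of the area, expressed through the right-hand side $N(f)$, vanishes on the constraint manifold — which is the same computation as in the forward direction, run with $z_t$ replaced by its expression $N(f)$ from \eqref{fequation1aux}. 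The regularity hypotheses $f\in C((0,T];\mathcal{F}^{3,1})\cap C^1((0,T];\mathcal{F}^{0,1})$ and $\|f\|_{\mathcal{F}^{0,1}}<1$ are what let us (a) make sense of $\partial_\alpha^2 z$ and the principal value integrals pointwise, (b) ensure $1 + f > 0$ so that $z$ is an embedded curve bounding a genuine domain, and (c) justify differentiating under the integral sign and the integrations by parts.

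The main obstacle I expect is the bookkeeping in the converse direction: one must be careful that the algebraic relation for $\widehat f(0)$ together with $\partial_t\mathbb{P}f = \mathbb{P}N(f)$ really does force $\partial_t\widehat f(0) = \widehat{N(f)}(0)$ and not merely some weaker statement — i.e. one needs the identity $\frac{1}{2\pi}\int_{\mathbb{T}} N(f)(\alpha)\,d\alpha = -\frac{1}{2\pi}\int_{\mathbb{T}} f(\alpha)\,\mathbb{P}N(f)(\alpha)\,d\alpha$ to hold as an \emph{algebraic identity in $f$} whenever $\widehat f(0) = -\frac{1}{4\pi}\int f^2$, which amounts to verifying that the area functional is a conserved quantity of the vector field $N$ on the full phase space (not just along solutions). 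This is essentially the same incompressibility computation as above but phrased at the level of the nonlinearity $N(f)$ via \eqref{fequation1aux} and the structure of $BR[z](\omega)\cdot\partial_\alpha z^\perp$, together with the fact that $c(t)$ contributes only a tangential reparametrization term $I_1$ whose integral against $\partial_\alpha z^\perp$ also vanishes; checking that the $I_1$ contribution is genuinely innocuous (using that $\dot c$ is spatially constant and integrating by parts the $\partial_\alpha$) is the one place where the specific form \eqref{z_param}--\eqref{c_def} of the parametrization enters.
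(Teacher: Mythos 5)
Your strategy is essentially the paper's: in the forward direction you integrate the kinematic equation \eqref{fequation1aux}, use that $BR[z](\omega)\cdot\partial_\alpha z(\alpha)^\perp$ has zero mean in $\alpha$ (the paper gets this from the exact identity $BR[z](\tilde\omega)\cdot\partial_\alpha z^\perp=\frac{1}{2\pi}\partial_\alpha\big(\pv\int_{\mathbb{T}}\log|z(\alpha)-z(\beta)|\,\tilde\omega(\beta)\,d\beta\big)$, so no appeal to $\int_{\mathbb{T}}\omega\,d\alpha=0$ or to flux heuristics is needed), note that the $\dot c$ term is an exact $\alpha$-derivative, and invoke the second condition in \eqref{f_cond}; your area computation $\pi R^2\big(1+2\widehat f(0)+\frac{1}{2\pi}\int f^2 d\alpha\big)$ is correct and is just a geometric rephrasing of the paper's $\frac{d}{dt}\big(\int_{\mathbb{T}} f\,d\alpha+\frac12\int_{\mathbb{T}} f^2 d\alpha\big)=0$. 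The key algebraic ingredient you isolate for the converse, namely $\int_{\mathbb{T}}(1+f(\alpha))N(f)(\alpha)\,d\alpha=0$ as an identity in $f$, is also exactly what the paper uses.

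However, the converse as you sketch it contains a genuine error. From \eqref{system_used} you only know $\partial_t\mathbb{P}f=\mathbb{P}N(f)$, so $\partial_t f=\mathbb{P}N(f)+\partial_t\widehat f(0)$, and differentiating the constraint gives $\partial_t\widehat f(0)=-\frac{1}{2\pi}\int_{\mathbb{T}} f\,\partial_t f\,d\alpha=-\frac{1}{2\pi}\int_{\mathbb{T}} f\,\mathbb{P}N(f)\,d\alpha-\widehat f(0)\,\partial_t\widehat f(0)$; you dropped the last term when you wrote $-\frac{1}{2\pi}\int f\,\partial_t f=-\frac{1}{2\pi}\int f\,\mathbb{P}N(f)$. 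As a consequence, the identity you propose to verify, $\frac{1}{2\pi}\int_{\mathbb{T}}N(f)\,d\alpha=-\frac{1}{2\pi}\int_{\mathbb{T}}f\,\mathbb{P}N(f)\,d\alpha$ on the constraint set, is false in general: using $\int_{\mathbb{T}}(1+f)N(f)\,d\alpha=0$ one computes $-\frac{1}{2\pi}\int_{\mathbb{T}}f\,\mathbb{P}N(f)\,d\alpha=(1+\widehat f(0))\,\widehat{N(f)}(0)$, so your identity would force $\widehat f(0)\,\widehat{N(f)}(0)=0$, which has no reason to hold (the mean of $f$ is not constant in time). The correct bookkeeping, as in the paper, yields $(1+\widehat f(0))\,\partial_t\widehat f(0)=(1+\widehat f(0))\,\widehat{N(f)}(0)$, and one cancels the factor $1+\widehat f(0)\neq 0$ precisely because $|\widehat f(0)|\le\|f\|_{\mathcal{F}^{0,1}}<1$ — this is the actual role of that hypothesis in the lemma, not the embeddedness of the curve. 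With this correction your argument becomes the paper's proof.
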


\begin{proof}
We first show that \eqref{vorticity_tilde}-\eqref{fequation1} imply \eqref{system_used}. First, we notice the following identity
\begin{equation*}
    \begin{aligned}
        \int_{\mathbb{T}}BR[z](\tilde{\omega})(\alpha)\cdot\partial_\alpha z(\alpha)^\perp d\alpha=0,
    \end{aligned}
\end{equation*}
which follows from the definition of the Birkhoff-Rott integral and the fact that $\tilde{\omega}\in Z$,
\begin{equation}\label{aux2}
    \begin{aligned}
        BR[z](\tilde{\omega})(\alpha)\cdot\partial_\alpha z(\alpha)^\perp&=\frac1{2\pi} \pv\int_{\mathbb{T}} \frac{(z(\alpha)-z(\beta))\cdot \partial_\alpha z(\alpha)}{|z(\alpha)-z(\beta)|^2}\tilde{\omega}(\beta)d\beta\\
        &=\frac1{2\pi}\partial_\alpha\Big(\pv\int_{\mathbb{T}}  \log{\big(|z(\alpha)-z(\beta)|\big)}\tilde{\omega}(\beta)d\beta\Big).
    \end{aligned}
\end{equation}
Then, integration in \eqref{fequation1aux} gives
\begin{equation*}
    \begin{aligned}
        \int_{\mathbb{T}}(1+f(\alpha))\partial_t f(\alpha)d\alpha=0, 
    \end{aligned}
\end{equation*}
thus, for $t\geq0$,
\begin{equation*}
    \begin{aligned}
        \int_{\mathbb{T}}f(\alpha)d\alpha=-\frac12\int_{\mathbb{T}}(f(\alpha))^2d\alpha.
    \end{aligned}
\end{equation*}
Next, we show that \eqref{system_used} imply \eqref{system1}. We take a time derivative to the first equation of \eqref{system_used},
\begin{equation*}
    \begin{aligned}
        \partial_t\widehat{f}(0)&=-\frac{1}{2\pi}\int_{\mathbb{T}}f(\alpha)\partial_t f(\alpha)d\alpha\\
        &=-\frac{1}{2\pi}\int_{\mathbb{T}}f(\alpha)\partial_t\big( \mathbb{P}f(\alpha)+\widehat{f}(0)\big)d\alpha,
    \end{aligned}
\end{equation*}
and then insert the second equation of \eqref{system_used},
\begin{equation*}
    \begin{aligned}
        \partial_t\widehat{f}(0)&=-\frac{1}{2\pi}\int_{\mathbb{T}}f(\alpha)\mathbb{P} N(f)(\alpha)d\alpha-\frac{1}{2\pi}\int_{\mathbb{T}}f(\alpha)\partial_t\widehat{f}(0)d\alpha\\
        &=-\frac{1}{2\pi}\int_{\mathbb{T}}\mathbb{P}f(\alpha) N(f)(\alpha)d\alpha-\widehat{f}(0)\partial_t\widehat{f}(0)\\
        &=-\frac{1}{2\pi}\int_{\mathbb{T}}f(\alpha) N(f)(\alpha)d\alpha+\frac{1}{2\pi}\int_{\mathbb{T}}\widehat{f}(0) N(f)(\alpha)d\alpha-\widehat{f}(0)\partial_t\widehat{f}(0).
    \end{aligned}
\end{equation*}
We can write the previous identity \eqref{aux2} in terms of $f$:
\begin{equation*}
    \begin{aligned}
        \int_{\mathbb{T}}\big(1+f(\alpha)\big)N(f)(\alpha)d\alpha=0,
    \end{aligned}
\end{equation*}
thus we get
\begin{equation*}
    \begin{aligned}
        \partial_t\widehat{f}(0)&=-\frac{1}{2\pi}\int_{\mathbb{T}}\big(1+f(\alpha)\big) N(f)(\alpha)d\alpha+\big(1+\widehat{f}(0)\big)\frac{1}{2\pi}\int_{\mathbb{T}} N(f)(\alpha)d\alpha-\widehat{f}(0)\partial_t\widehat{f}(0).
    \end{aligned}
\end{equation*}
Hence, since $\|f\|_{\mathcal{F}^{0,1}}<1$,
\begin{equation*}
    \begin{aligned}
        \partial_t\widehat{f}(0)&=\frac{1}{2\pi}\int_{\mathbb{T}}N(f)(\alpha)d\alpha=\widehat{N(f)}(0).
    \end{aligned}
\end{equation*}
   
\end{proof}

The previous lemma and Parseval's identity shows that at any time instant, the zero frequency is determined by $\mathbb{P}f$:
\begin{equation}\label{zero_freq}
    \begin{aligned}
        \widehat{f}(0)&=-1+\sqrt{1-\sum_{k\in\mathbb{Z}\setminus\{0\}}|\widehat{f}(k)|^2}=-1+\sqrt{1-\frac{1}{2\pi}\|\mathbb{P}f\|_{L^2}^2}.
    \end{aligned}
\end{equation}
Let us denote $\mathcal{S}$ and $\mathcal{S}^{-1}$ the linear operators in physical space associated to $S$ and $S^{-1}$ defined in Proposition \ref{diagonalization}, and $\mathcal{A}$ the linear, diagonal, positive operator defined via the Fourier multiplier $a(k)$: $\mathcal{F}\big(\mathcal{A}f\big)(k):=a(k)\widehat{f}(k)$.
We further recall the notation 
\begin{equation*}
N_{\geq2}(f)=N(f)(\alpha)-N_1(f)(\alpha),    
\end{equation*}
so that equation \eqref{f_system} writes as
\begin{equation}\label{N_system2}
\begin{aligned}
\partial_t f(\alpha)&=N_1(f)(\alpha)+N_{\geq2}(f)(\alpha).
\end{aligned}
\end{equation}
Applying $\mathbb{P}$ and $\mathcal{S}^{-1}$ the equation rewrites as 
\begin{equation*}
\begin{aligned}
\partial_t \mathcal{S}^{-1}\mathbb{P}f(\alpha)&=\mathcal{A}\mathcal{S}^{-1}\mathbb{P}f(\alpha)+\mathcal{S}^{-1}\mathbb{P}N_{\geq2}(f)(\alpha).
\end{aligned}
\end{equation*}
 We can write the equation into its mild formulation using Duhamel formula:
\begin{equation}\label{duhamel_eq}
    \begin{aligned}
        \mathbb{P}f(\alpha)=\mathcal{S}e^{-\mathcal{A}t}\mathcal{S}^{-1}\mathbb{P}f_0(\alpha)+\int_0^t\big(\mathcal{S}e^{-(t-\tau)\mathcal{A}}\mathcal{S}^{-1}\mathbb{P}N_{\geq2}(f)(\tau)\big)(\alpha)d\tau.
    \end{aligned}
\end{equation}
Finally, we formulate our problem \eqref{system_used} as a fixed point equation in the space $X_{T,\nu}(\mathbb{T})$, given in \eqref{Xspace}:
\begin{equation*}
    \begin{aligned}
        \mathbb{P}f&=\mathcal{T}(\mathbb{P}f),
    \end{aligned}
\end{equation*}
with $\mathcal{T}:X_{T,\nu}(\mathbb{T})\longrightarrow X_{T,\nu}(\mathbb{T})$ defined by
\begin{equation*}
    \begin{aligned}
        \mathcal{T}(\mathbb{P}f)&:=\mathcal{S}e^{-\mathcal{A}t}\mathcal{S}^{-1}\mathbb{P}f_0+\int_0^t\mathcal{S}e^{-(t-\tau)\mathcal{A}}\mathcal{S}^{-1}\mathbb{P}N_{\geq2}(f)(\tau)d\tau.
    \end{aligned}
\end{equation*}
We notice that $N_{\geq2}(f)$ above only depends on $\mathbb{P}f$ since, for each fixed time, we will show that the $\omega$ equation in \eqref{system_used} is uniquely solvable and gives $\omega=\omega(f)$, and by \eqref{zero_freq}, we can write
\begin{equation*}
    f=\widehat{f}(0)+\mathbb{P}f=\mathbb{P}f-1+\sqrt{1-\frac{1}{2\pi}\|\mathbb{P}f\|_{L^2}^2}.
\end{equation*}
Therefore, it holds that for any $s\geq0$,
\begin{equation}\label{zerof11}
    \begin{aligned}
    |\widehat{f}(0)|&\lesssim_1 \|f\|_{\dot{\mathcal{F}}^{0,1}},\\
    \|f\|_{\mathcal{F}^{s,1}_\nu}&=\widehat{f}(0)+ \|f\|_{\dot{\mathcal{F}}^{s,1}_\nu}\lesssim_1 \|f\|_{\dot{\mathcal{F}}^{s,1}_\nu}. 
    \end{aligned}
\end{equation}
With abuse of notation, we have the fixed point equation
\begin{equation}\label{fixed_p}
    \begin{aligned}
     \mathbb{P}f&=\mathcal{T}(\mathbb{P}f),\\
        \mathcal{T}(\mathbb{P}f)&:=\mathcal{S}e^{-\mathcal{A}t}\mathcal{S}^{-1}\mathbb{P}f_0+\int_0^t\mathcal{S}e^{-(t-\tau)\mathcal{A}}\mathcal{S}^{-1}\mathbb{P}\tilde{N}_{\geq2}(\mathbb{P}f)(\tau)d\tau,
    \end{aligned}
\end{equation}
where $\tilde{N}_{\geq2}(\mathbb{P}f):=N_{\geq2}\big(\mathbb{P}f-1+\sqrt{1-\frac{1}{2\pi}\|\mathbb{P}f\|_{L^2}^2}\big)$.
\begin{thm}\label{fixed_theo}
     Let $f_0\in \mathcal{F}^{1,1}(\mathbb{T})$ and $A_\sigma>0$. For any $A_\mu\in[-1,1]$, $A_{\rho\sigma}\in\mathbb{R}$, there exists $\nu>0$ and $\varepsilon=\varepsilon(A_\mu, |A_{\rho\sigma}|)>0$  such that, for any $T>0$, if $\|f_0\|_{\mathcal{F}^{1,1}}\leq \varepsilon$, then  $\mathcal{T}$ forms a contraction in the ball $B_{\varepsilon,T,\nu}(u_0)\subset X_{T,\nu}(\mathbb{T})$, with
     \begin{equation*}
         u_0:=\mathcal{S}e^{-\mathcal{A}t}\mathcal{S}^{-1}\mathbb{P}f_0.
     \end{equation*}
\end{thm}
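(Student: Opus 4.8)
The plan is to verify the hypotheses of the Banach fixed point theorem for $\mathcal{T}$ on the closed ball $B_{\varepsilon,T,\nu}(u_0)\subset X_{T,\nu}(\mathbb{T})$, which is a complete metric space since $X_{T,\nu}$ is a Banach space. Writing $\mathcal{T}(h)=u_0+\Phi(h)$ with the Duhamel term $\Phi(h)(t):=\int_0^t\mathcal{S}e^{-(t-\tau)\mathcal{A}}\mathcal{S}^{-1}\mathbb{P}\tilde{N}_{\geq2}(h)(\tau)\,d\tau$ (linear in its integrand, with $h$ playing the role of $\mathbb{P}f$), the self-mapping property $\|\mathcal{T}(h)-u_0\|_{X_{T,\nu}}=\|\Phi(h)\|_{X_{T,\nu}}\le\varepsilon$ and the contraction property $\|\mathcal{T}(h_1)-\mathcal{T}(h_2)\|_{X_{T,\nu}}=\|\Phi(h_1)-\Phi(h_2)\|_{X_{T,\nu}}\le\tfrac12\|h_1-h_2\|_{X_{T,\nu}}$ both reduce to a single linear estimate for $\Phi$ combined with the nonlinear bounds of Section 3; this also shows $\mathcal{T}$ is well-defined on the ball.

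The first step is to fix $\nu$ and establish the key semigroup estimate. The Fourier symbol of $\mathcal{A}$ is $a(k)=|k|(|k|^2-1+\delta_1(k))$, so $a(1)=1$ and $a(k)=|k|^3-|k|\ge|k|$ for $|k|\ge2$; choosing any $\nu\in(0,1)$, e.g. $\nu=\tfrac12$, yields $a(k)-\nu|k|\ge(1-\nu)|k|>0$ for all $k\ne0$ and $a(k)-\nu|k|\gtrsim|k|^3$ for $|k|\ge2$, hence $|k|^4/(a(k)-\nu|k|)\lesssim|k|$ uniformly in $k\ne0$. Using the boundedness of $\mathcal{S},\mathcal{S}^{-1}$ on $\dot{\mathcal{F}}^{s,1}_\nu$ with constant $C_S=C_S(|A_{\rho\sigma}|(1-A_\mu))$ (Lemma \ref{lemmaS} and its Remark), the splitting $e^{\nu|k|t}=e^{\nu|k|(t-\tau)}e^{\nu|k|\tau}$, the bound $e^{-(a(k)-\nu|k|)(t-\tau)}\le1$ for the $L^\infty_T(\mathcal{F}^{0,1}_\nu)$ component, and Tonelli together with $\int_\tau^T|k|^4e^{-(a(k)-\nu|k|)(t-\tau)}\,dt\le|k|^4/(a(k)-\nu|k|)\lesssim|k|$ for the $L^1_T(\mathcal{F}^{4,1}_\nu)$ component, one obtains
\[
\|\Phi(h)\|_{X_{T,\nu}}\le C\,C_S^2\,\|\tilde{N}_{\geq2}(h)\|_{L^1_T(\mathcal{F}^{1,1}_\nu)},
\]
and, applying the same computation to $u_0=\mathcal{S}e^{-\mathcal{A}t}\mathcal{S}^{-1}\mathbb{P}f_0$ (gaining three derivatives via $\int_0^\infty|k|^4e^{-(a(k)-\nu|k|)t}\,dt\lesssim|k|$), $\|u_0\|_{X_{T,\nu}}\le C\,C_S^2\,\|f_0\|_{\mathcal{F}^{1,1}}$. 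All constants are independent of $T$, since the dissipation makes $\int_0^T$ and $\int_0^\infty$ comparable.

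The second step invokes the nonlinear estimates of Section 3 (which rely on Section 4 for the unique solvability of the vorticity equation, so that $\tilde{N}_{\geq2}(h)$ is well-defined, and on \eqref{zero_freq} and \eqref{zerof11} to control $\widehat{f}(0)$ and keep $\|f\|_{\mathcal{F}^{0,1}}<1$ when $h$ is small in $X_{T,\nu}$): since $\tilde{N}_{\geq2}$ contains neither a constant nor a linear part, one has
\[
\|\tilde{N}_{\geq2}(h)\|_{L^1_T(\mathcal{F}^{1,1}_\nu)}\lesssim_1\|h\|_{X_{T,\nu}}^2,\qquad
\|\tilde{N}_{\geq2}(h_1)-\tilde{N}_{\geq2}(h_2)\|_{L^1_T(\mathcal{F}^{1,1}_\nu)}\lesssim_1\big(\|h_1\|_{X_{T,\nu}}+\|h_2\|_{X_{T,\nu}}\big)\|h_1-h_2\|_{X_{T,\nu}},
\]
the point being that the worst contributions of $N$ are of the schematic form (a factor controlled in $\mathcal{F}^{0,1}_\nu$) times (a factor carrying the three derivatives from the curvature and the Birkhoff–Rott operator, controlled in $\mathcal{F}^{4,1}_\nu$), matching the $L^\infty_T(\mathcal{F}^{0,1}_\nu)\cdot L^1_T(\mathcal{F}^{4,1}_\nu)$ structure of $X_{T,\nu}$, while the analytic weight passes through products via $e^{\nu|k|t}\le e^{\nu|j|t}e^{\nu|k-j|t}$. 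Combining: for $h,h_1,h_2\in B_{\varepsilon,T,\nu}(u_0)$ we have $\|h\|_{X_{T,\nu}}\le\|u_0\|_{X_{T,\nu}}+\varepsilon\le(1+CC_S^2)\varepsilon=:C_1\varepsilon$, so with $C(x)\to1$ as $x\to0^+$,
\[
\|\Phi(h)\|_{X_{T,\nu}}\le C\,C_S^2\,C(C_1\varepsilon)\,C_1^2\,\varepsilon^2,\qquad
\|\Phi(h_1)-\Phi(h_2)\|_{X_{T,\nu}}\le C\,C_S^2\,C(C_1\varepsilon)\,C_1\,\varepsilon\,\|h_1-h_2\|_{X_{T,\nu}}.
\]
Choosing $\varepsilon=\varepsilon(A_\mu,|A_{\rho\sigma}|)>0$ small enough — the dependence entering only through $C_S$, which is finite for every $A_\mu\in[-1,1]$, $A_{\rho\sigma}\in\mathbb{R}$ — makes the first right-hand side $\le\varepsilon$ and the Lipschitz constant in the second $\le\tfrac12$, so $\mathcal{T}$ is a contraction of $B_{\varepsilon,T,\nu}(u_0)$, which is the claim.

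The genuine difficulty lies not in this assembly but in the nonlinear bounds of Section 3 quoted above: estimating $\tilde{N}_{\geq2}$ in $L^1_T(\mathcal{F}^{1,1}_\nu)$ means handling the singular integral operator $\mathcal{D}[f]$, the curvature nonlinearity with its denominator $(\Delta_\beta f(\alpha))^2+(1+f(\alpha))(1+f(\alpha-\beta))$, and the vorticity integral equation, all while tracking the analytic weight and making the derivative count close so that the three-derivative smoothing of the semigroup is exactly saturated — this is where the criticality of $\mathcal{F}^{1,1}$ is felt. Within the present proof the one structural point that must not be overlooked is that the precise choice of the pole $c(t)$ in \eqref{c_def} is what renders the otherwise neutral $k=\pm1$ Fourier modes dissipative, producing $a_1=1>0$ in \eqref{coeffs} and hence a fully dissipative diagonal operator $\mathcal{A}$; this cancellation, used in passing from the equation for $\tfrac{d}{dt}\widehat{g}(k)$ to \eqref{linearZ}, is indispensable, as without it there would be no decay and no contraction in any space containing the first frequency.
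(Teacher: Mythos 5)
Your proposal is correct and follows essentially the same route as the paper's proof: the Duhamel/mild formulation after diagonalization, the semigroup estimates of Lemma \ref{semig_lem} (which you re-derive directly, gaining three derivatives from $a(k)\simeq |k|^3$ with $\nu$ small), the $\ell^1$ bounds $C_S$ on $\mathcal{S},\mathcal{S}^{-1}$ from Lemma \ref{lemmaS}, and the quadratic nonlinear and vorticity estimates of Sections 3--4 combined with interpolation, closed by taking $\varepsilon$ small. The only cosmetic discrepancy is your remark that the parameter dependence enters only through $C_S$: in the paper the nonlinear and vorticity bounds also carry explicit factors of order $1+|A_{\rho\sigma}|(1+A_\mu)$, so the smallness of $\varepsilon$ is dictated by these as well, which is harmless since $\varepsilon$ is allowed to depend on $A_\mu$ and $|A_{\rho\sigma}|$.
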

\begin{proof} 
First, Lemmas \ref{lemmaS} and \ref{semig_lem} show that $u_0$ is  contained in a ball of $X_{T,\nu}$ centered at the origin and with radius proportional to $\varepsilon$: 
\begin{equation*}
    \begin{aligned}
        \|u_0\|_{X_{T,\nu}}&\lesssim C_S\|f_0\|_{\dot{\mathcal{F}}^{1,1}},
    \end{aligned}
\end{equation*}
with $C_S = C_S\big(|A_{\rho\sigma}|(1-A_\mu)\big)$. The continuity in time at $t=0^+$ in $Z^{1,1}_\nu$  follows from having $f\in L^1(0,T;\dot{\mathcal{F}}^{4,1}_\nu)$ and Duhamel's formula.
Therefore, if $f\in B_{\varepsilon, T,\nu}(u_0)$, it is also contained in a ball of radius comparable to $\varepsilon$:
\begin{equation*}
    \begin{aligned}
        \|f\|_{X_{T,\nu}}&\lesssim(1+C_S)\varepsilon.
    \end{aligned}
\end{equation*}
We next show that $\mathcal{T}$ maps the ball $B_{\varepsilon,T,\nu}(u_0)$ into itself. Let $f\in B_{\varepsilon,T,\nu}(u_0)$. We have
\begin{equation*}
    \begin{aligned}
        \|\mathcal{T}(f)-u_0\|_{X_{T},\nu}&\leq \Big|\Big|\int_0^t\mathcal{S}e^{-(t-\tau)\mathcal{A}}\mathcal{S}^{-1}\mathbb{P}\tilde{N}_{\geq2}(f)(\tau)d\tau\Big|\Big|_{X_{T},\nu},
    \end{aligned}
\end{equation*}
which by Lemmas \ref{lemmaS} and \ref{semig_lem} is bounded as follows
\begin{equation*}
    \begin{aligned}
        \|\mathcal{T}(f)-u_0\|_{X_{T},\nu}&\lesssim C_S\int_0^T\|\tilde{N}_{\geq2}(f)(\tau)\|_{\dot{\mathcal{F}}^{1,1}_\nu}d\tau.
    \end{aligned}
\end{equation*}
The expression of $N_{\geq2}(f)$ as the nonlinear in $f$ part of $N(f)$ \eqref{fequation1}, together with Lemma \ref{nonlinearlemma}, yields that
\begin{equation}\label{auxN}
    \begin{aligned}
        \|\tilde{N}_{\geq2}(f)\|_{\dot{\mathcal{F}}^{1,1}_\nu}&\lesssim_1 \big(\|f\|_{\dot{\mathcal{F}}^{2,1}_\nu}\|f\|_{\dot{\mathcal{F}}^{1,1}_\nu}+\|f\|_{\dot{\mathcal{F}}^{1,1}_\nu}^2\big)+|A_{\rho\sigma}|\big(\|f\|_{\dot{\mathcal{F}}^{2,1}_\nu}\|f\|_{\dot{\mathcal{F}}^{1,1}_\nu}+\|f\|_{\dot{\mathcal{F}}^{1,1}_\nu}^2\big)\\
        &\quad+\|\tilde{\omega}_{\geq1}\|_{\dot{\mathcal{F}}^{0,1}_\nu}\|f\|_{\dot{\mathcal{F}}^{2,1}_\nu}+\|\tilde{\omega}_{\geq1}\|_{\dot{\mathcal{F}}^{1,1}_\nu}\|f\|_{\dot{\mathcal{F}}^{1,1}_\nu},
    \end{aligned}
\end{equation}
where we have repeatedly used \eqref{zerof11} and \eqref{Wienerorder}. Moreover, since $f\in Z$, $\|f\|_{\mathcal{F}^{s,1}_\nu}=\|f\|_{\dot{\mathcal{F}}^{s,1}_\nu}$ for any $s\geq0$. We plug in the vorticity estimates from Corollary \ref{vorticity_cor} to obtain
\begin{equation*}
    \begin{aligned}
        \|\tilde{N}_{\geq2}(f)\|_{\dot{\mathcal{F}}^{1,1}_\nu}&\lesssim_1 \|f\|_{\dot{\mathcal{F}}^{2,1}_\nu}\|f\|_{\dot{\mathcal{F}}^{1,1}_\nu}+\|f\|_{\dot{\mathcal{F}}^{3,1}_\nu}\|f\|_{\dot{\mathcal{F}}^{2,1}_\nu}+\|f\|_{\dot{\mathcal{F}}^{4,1}_\nu}\|f\|_{\dot{\mathcal{F}}^{1,1}_\nu}\\
        &\quad+|A_{\rho\sigma}|(1+A_\mu)\|f\|_{\dot{\mathcal{F}}^{2,1}_\nu}\|f\|_{\dot{\mathcal{F}}^{1,1}_\nu},
    \end{aligned}
\end{equation*}
thus interpolation (see Lemma \ref{interp_lem})  followed by time integration gives
\begin{equation*}
    \begin{aligned}
        \int_0^T\!\!\|\tilde{N}_{\geq2}(f)(\tau)\|_{\dot{\mathcal{F}}^{1,1}_\nu}d\tau&\lesssim_1 \!\big(1\!+\!|A_{\rho\sigma}|(1\!+\!A_\mu)\big)\int_0^T\!\!\|f\|_{\dot{\mathcal{F}}^{4,1}_\nu}\|f\|_{\dot{\mathcal{F}}^{1,1}_\nu}d\tau\lesssim_1 \big(1\!+\!|A_{\rho\sigma}|(1\!+\!A_\mu)\big)\varepsilon^2.
    \end{aligned}
\end{equation*}
Finally, we show that $\mathcal{T}$ is a contraction in $B_{\varepsilon, T,\nu}(u_0)$. Let $f_1, f_2\in B_{\varepsilon,T,\nu}$. Then, using Lemmas \ref{lemmaS} and \ref{semig_lem} as before,
\begin{equation}\label{contract_est}
    \begin{aligned}
        \|\mathcal{T}(f_1)-\mathcal{T}(f_2)\|_{X_{T},\nu}&\lesssim C_S\int_0^T\big(\|\tilde{N}_{\geq2}(f_1)(\tau)-\tilde{N}_{\geq2}(f_2)(\tau)\|_{\dot{\mathcal{F}}^{1,1}_\nu}\big)d\tau.
    \end{aligned}
\end{equation}
We perform the difference of nonlinearities and apply Lemma \ref{nonlinearlemma} to obtain, following the steps done in  \eqref{auxN},
\begin{equation*}
    \begin{aligned}
        \|\tilde{N}_{\geq2}(f_1)-\tilde{N}_{\geq2}(f_2)\|_{\dot{\mathcal{F}}^{1,1}_\nu}\lesssim_1 L_1+L_2+L_3,
    \end{aligned}
\end{equation*}
where
\begin{equation*}
    \begin{aligned}
      L_1&=\|f_1-f_2\|_{\dot{\mathcal{F}}^{2,1}_\nu}\big(\|f_1\|_{\dot{\mathcal{F}}^{1,1}_\nu}+\|f_2\|_{\dot{\mathcal{F}}^{1,1}_\nu}\big)+\|f_1-f_2\|_{\dot{\mathcal{F}}^{1,1}_\nu}\big(\|f_1\|_{\dot{\mathcal{F}}^{2,1}_\nu}+\|f_2\|_{\dot{\mathcal{F}}^{2,1}_\nu}\big),
    \end{aligned}
\end{equation*}
\begin{equation*}
    \begin{aligned}
      L_2&=|A_{\rho\sigma}|\Big(\|f_1-f_2\|_{\dot{\mathcal{F}}^{2,1}_\nu}\big(\|f_1\|_{\dot{\mathcal{F}}^{1,1}_\nu}+\|f_2\|_{\dot{\mathcal{F}}^{1,1}_\nu}\big)+\|f_1-f_2\|_{\dot{\mathcal{F}}^{1,1}_\nu}\big(\|f_1\|_{\dot{\mathcal{F}}^{2,1}_\nu}+\|f_2\|_{\dot{\mathcal{F}}^{2,1}_\nu}\big)\Big),
    \end{aligned}
\end{equation*}
and
\begin{equation*}
    \begin{aligned}
       L_3&=\|\tilde{\omega}_{\geq1}(f_1)-\tilde{\omega}_{\geq1}(f_2)\|_{\dot{\mathcal{F}}^{0,1}_\nu}\big(\|f_1\|_{\dot{\mathcal{F}}^{2,1}_\nu}+\|f_2\|_{\dot{\mathcal{F}}^{2,1}_\nu}\big)\\
       &\quad+\big(\|\tilde{\omega}_{\geq1}(f_1)\|_{\dot{\mathcal{F}}^{0,1}_\nu}+\|\tilde{\omega}_{\geq1}(f_2)\|_{\dot{\mathcal{F}}^{0,1}_\nu}\big)\|f_1-f_2\|_{\dot{\mathcal{F}}^{2,1}_\nu}\\
        &\quad+\|\tilde{\omega}_{\geq1}(f_1)-\tilde{\omega}_{\geq1}(f_2)\|_{\dot{\mathcal{F}}^{1,1}_\nu}\big(\|f_1\|_{\dot{\mathcal{F}}^{1,1}_\nu}+\|f_2\|_{\dot{\mathcal{F}}^{1,1}_\nu}\big)\\
        &\quad+\big(\|\tilde{\omega}_{\geq1}(f_1)\|_{\dot{\mathcal{F}}^{1,1}_\nu}+\|\tilde{\omega}_{\geq1}(f_2)\|_{\dot{\mathcal{F}}^{1,1}_\nu}\big)\|f_1-f_2\|_{\dot{\mathcal{F}}^{1,1}_\nu}.
    \end{aligned}
\end{equation*}
We immediately find that
we find that 
\begin{equation*}
    \begin{aligned}
      \int_0^T \big(L_1(\tau)+L_2(\tau)\big)d\tau&\lesssim_1 \varepsilon\big(1\!+\!|A_{\rho\sigma}|\big)\Big(\!\int_0^T\!\!\|f_1\!-\!f_2\|_{\dot{\mathcal{F}}^{2,1}_\nu}d\tau\!+\!\sup_{0\leq \tau\leq t}\|f_1\!-\!f_2\|_{\dot{\mathcal{F}}^{1,1}_\nu}\!\Big)\\
      &=\varepsilon\big(1\!+\!|A_{\rho\sigma}|\big) \|f_1-f_2\|_{X_{T,\nu}}.
    \end{aligned}
\end{equation*}
As done in Corollary \ref{vorticity_cor}, the vorticity satisfies the following contraction estimates:
\begin{equation*}
    \begin{aligned}
    \|\tilde{w}_{\geq1}(f_1)-\tilde{w}_{\geq1}(f_2)\|_{\dot{\mathcal{F}}^{0,1}_\nu}&\lesssim_1 \|f_1-f_2\|_{\dot{\mathcal{F}}^{3,1}_\nu}+|A_{\rho\sigma}|(1+A_\mu)\|f_1-f_2\|_{\dot{\mathcal{F}}^{1,1}_\nu},\\
    \|\tilde{w}_{\geq1}(f_1)-\tilde{w}_{\geq1}(f_2)\|_{\dot{\mathcal{F}}^{1,1}_\nu}&\lesssim_1\|f_1-f_2\|_{\dot{\mathcal{F}}^{4,1}_\nu}+|A_{\rho\sigma}|(1+A_\mu)\|f_1-f_2\|_{\dot{\mathcal{F}}^{2,1}_\nu},
    \end{aligned}
\end{equation*}
where here the notation $\lesssim_1$ includes dependency on $\|f_1\|_{\dot{\mathcal{F}}^{1,1}_\nu}$ and $\|f_2\|_{\dot{\mathcal{F}}^{1,1}_\nu}$.
Interpolation followed by Young's inequality gives that 
\begin{equation*}
    \begin{aligned}
       \int_0^T \|f_1-f_2\|_{\dot{\mathcal{F}}^{3,1}_\nu}\|f_i\|_{\dot{\mathcal{F}}^{2,1}_\nu}d\tau&\lesssim  \int_0^T \|f_1-f_2\|_{\dot{\mathcal{F}}^{1,1}_\nu}\|f_i\|_{\dot{\mathcal{F}}^{4,1}_\nu}d\tau+ \int_0^T \|f_1-f_2\|_{\dot{\mathcal{F}}^{4,1}_\nu}\|f_i\|_{\dot{\mathcal{F}}^{1,1}_\nu}d\tau\\
       &\lesssim \varepsilon\Big( \sup_{0\leq\tau\leq T} \|f_1-f_2\|_{\dot{\mathcal{F}}^{1,1}_\nu}d\tau+ \int_0^T \|f_1-f_2\|_{\dot{\mathcal{F}}^{4,1}_\nu}d\tau\Big)\\
       &=\varepsilon \|f_1-f_2\|_{X_{T,\nu}},
    \end{aligned}
\end{equation*}
and analogously
\begin{equation*}
    \begin{aligned}
       \int_0^T \|f_1-f_2\|_{\dot{\mathcal{F}}^{2,1}_\nu}\|f_i\|_{\dot{\mathcal{F}}^{3,1}_\nu}d\tau&\lesssim\varepsilon \|f_1-f_2\|_{X_{T,\nu}}.
    \end{aligned}
\end{equation*}
Hence we obtain
\begin{equation*}
    \begin{aligned}
      \int_0^T L_3(\tau)d\tau&\lesssim_1 \varepsilon \big(1+|A_{\rho\sigma}|(1+A_\mu)\big)\|f_1-f_2\|_{X_{T,\nu}},
    \end{aligned}
\end{equation*}
and substituting back in \eqref{contract_est} we conclude that
\begin{equation*}
    \begin{aligned}
        \|\mathcal{T}(f_1)-\mathcal{T}(f_2)\|_{X_{T},\nu}&\lesssim_1 C_S
        \big(1+|A_{\rho\sigma}|(1-A_\mu)\big)\varepsilon\|f_1-f_2\|_{X_{T,\nu}},
    \end{aligned}
\end{equation*}
thus choosing $\varepsilon=\varepsilon(A_\mu, |A_{\rho\sigma}|)>0$ sufficiently small we conclude that $\mathcal{T}$ is a contraction.

\end{proof}

\begin{lemma}[Semigroup estimate]\label{semig_lem}
   Let $g_0\in\dot{\mathcal{F}}^{1,1}$ and $g\in L^\infty(0,T;\dot{\mathcal{F}}^{1,1}_\nu)\cap L^1(0,T;\dot{\mathcal{F}}^{4,1}_\nu)$  be real-valued functions, and denote $F(t,\alpha)=\int_0^t e^{-(t-\tau)\mathcal{A}}g(\tau,\cdot)(\alpha)d\tau.$
   Then, for $\nu\geq0$ small enough,
   \begin{equation}\label{semig1}
   \begin{aligned}
               \sup_{t\geq0}\|e^{-t\mathcal{A}}g_0\|_{\dot{\mathcal{F}}^{1,1}_\nu}&\leq \|g_0\|_{\dot{\mathcal{F}}^{1,1}},\\
        \|F(t)\|_{\dot{\mathcal{F}}^{1,1}_\nu}&\leq\int_0^t \|g(\tau)\|_{\dot{\mathcal{F}}^{1,1}_\nu}d\tau,      
   \end{aligned}
   \end{equation}
  and
  \begin{equation}\label{semig2}
        \begin{aligned}
        \int_0^t\|e^{-\tau\mathcal{A}}g_0\|_{\dot{\mathcal{F}}^{4,1}_\nu}d\tau&\leq C(\nu) \|g_0\|_{\dot{\mathcal{F}}^{1,1}}\lesssim \|g_0\|_{\dot{\mathcal{F}}^{1,1}},\\
\int_0^t\|F(\tau)\|_{\dot{\mathcal{F}}^{4,1}_\nu}d\tau&\leq C(\nu)\int_0^t \|g(\tau)\|_{\dot{\mathcal{F}}^{1,1}_\nu}d\tau\lesssim\int_0^t \|g(\tau)\|_{\dot{\mathcal{F}}^{1,1}_\nu}d\tau.
        \end{aligned}
    \end{equation}
\end{lemma}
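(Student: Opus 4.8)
The plan is to reduce everything to the Fourier side, where $\mathcal{A}$ acts as multiplication by the positive symbol $a(k)$, with $a(\pm1)=1$ and $a(k)=|k|^3-|k|$ for $|k|\geq2$ (cf.\ \eqref{coeffs}). Two elementary facts about this symbol drive the whole argument. First, $a(k)/|k|=|k|^2-1+\delta_1(|k|)\geq1$ for every $k\neq0$, with equality only at $|k|=1$; hence $a(k)-\nu|k|\geq0$ as soon as $\nu\leq1$, which is exactly the smallness condition needed for the $\dot{\mathcal{F}}^{1,1}_\nu$ bounds. Second, for $\nu<1$ one has the quantitative gain $a(k)-\nu|k|\geq c_\nu|k|^3$ with $c_\nu:=\min\{1-\nu,\tfrac12\}>0$: at $|k|=1$ the left side equals $1-\nu$, while for $|k|\geq2$ it is at least $|k|^3-(1+\nu)|k|\geq|k|^3-2|k|\geq\tfrac12|k|^3$. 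Fix $\nu$ small enough (say $\nu\leq\tfrac12$) so that both hold; since all the series and integrals below have nonnegative terms, every Tonelli/Fubini interchange is free.

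For \eqref{semig1}: writing $\widehat{(e^{-t\mathcal{A}}g_0)}(k)=e^{-a(k)t}\widehat{g_0}(k)$, the $\dot{\mathcal{F}}^{1,1}_\nu$ weight at time $t$ contributes $e^{\nu|k|t}|k|e^{-a(k)t}|\widehat{g_0}(k)|$, and $e^{(\nu|k|-a(k))t}\leq1$ gives $\|e^{-t\mathcal{A}}g_0\|_{\dot{\mathcal{F}}^{1,1}_\nu}\leq\sum_{k\neq0}|k||\widehat{g_0}(k)|=\|g_0\|_{\dot{\mathcal{F}}^{1,1}}$ uniformly in $t$. For $F$ one has $\widehat{F}(t)(k)=\int_0^t e^{-a(k)(t-\tau)}\widehat{g}(\tau)(k)\,d\tau$; factor the weight as $e^{\nu|k|t}=e^{\nu|k|(t-\tau)}e^{\nu|k|\tau}$, use $e^{(\nu|k|-a(k))(t-\tau)}\leq1$ for $\tau\leq t$ to bound $e^{\nu|k|t}|k||\widehat{F}(t)(k)|\leq|k|\int_0^t e^{\nu|k|\tau}|\widehat{g}(\tau)(k)|\,d\tau$, sum in $k$, and swap sum and integral to get $\|F(t)\|_{\dot{\mathcal{F}}^{1,1}_\nu}\leq\int_0^t\|g(\tau)\|_{\dot{\mathcal{F}}^{1,1}_\nu}\,d\tau$.

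For \eqref{semig2} I use the cubic gain. For the homogeneous term, $\int_0^t e^{\nu|k|\tau}|k|^4 e^{-a(k)\tau}\,d\tau\leq|k|^4\int_0^\infty e^{-(a(k)-\nu|k|)\tau}\,d\tau=\frac{|k|^4}{a(k)-\nu|k|}\leq c_\nu^{-1}|k|$, and summing in $k$ yields $\int_0^t\|e^{-\tau\mathcal{A}}g_0\|_{\dot{\mathcal{F}}^{4,1}_\nu}\,d\tau\leq c_\nu^{-1}\|g_0\|_{\dot{\mathcal{F}}^{1,1}}$, i.e.\ $C(\nu)=c_\nu^{-1}$. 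For $F$, bound $e^{\nu|k|\tau}|k|^4|\widehat{F}(\tau)(k)|\leq|k|^4\int_0^\tau e^{-(a(k)-\nu|k|)(\tau-s)}e^{\nu|k|s}|\widehat{g}(s)(k)|\,ds$ exactly as above, integrate in $\tau\in[0,t]$, interchange the $s$- and $\tau$-integrals, dominate $\int_s^t e^{-(a(k)-\nu|k|)(\tau-s)}\,d\tau\leq(a(k)-\nu|k|)^{-1}$, and use $|k|^4/(a(k)-\nu|k|)\leq c_\nu^{-1}|k|$ together with a final sum in $k$ to get $\int_0^t\|F(\tau)\|_{\dot{\mathcal{F}}^{4,1}_\nu}\,d\tau\leq c_\nu^{-1}\int_0^t\|g(\tau)\|_{\dot{\mathcal{F}}^{1,1}_\nu}\,d\tau$; the trailing $\lesssim$ in each line is immediate once $\nu$, hence $c_\nu$, is fixed.

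There is no genuine obstacle: the proof is a routine manipulation of absolutely convergent Fourier series plus Tonelli interchanges. The two points that actually require care are that the smallness of $\nu$ is used nontrivially — the threshold $\nu<1$ and the blow-up $C(\nu)=c_\nu^{-1}\to\infty$ as $\nu\uparrow1$ come entirely from the marginal mode $|k|=1$, where $a(k)/|k|=1$ — and that the $\dot{\mathcal{F}}^{s,1}_\nu$ weight is evaluated at the current time $t$, which is precisely why the splitting $e^{\nu|k|t}=e^{\nu|k|(t-\tau)}e^{\nu|k|\tau}$ is the correct device in the Duhamel estimates.
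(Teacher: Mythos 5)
Your proof is correct and follows essentially the same Fourier-side argument as the paper: the splitting $e^{\nu|k|t}=e^{\nu|k|(t-\tau)}e^{\nu|k|\tau}$, the bound $a_k\geq |k|$ (so $\nu\leq 1$ suffices) for \eqref{semig1}, and the cubic growth $a_k\simeq |k|^3$ for \eqref{semig2}. The only harmless deviation is in the smoothing estimates, where you integrate the exponential directly using $a_k-\nu|k|\geq c_\nu |k|^3$ to get the explicit constant $C(\nu)=c_\nu^{-1}$, whereas the paper reaches the same bounds via an integration by parts in time for the homogeneous term and Young's convolution inequality for the Duhamel term (which is exactly what your Tonelli swap and domination of $\int_s^t e^{-(a_k-\nu|k|)(\tau-s)}d\tau$ amount to).
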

\begin{proof}
By definition of the operator $\mathcal{A}$ and the Wiener norm, we get
\begin{equation*}
    \begin{aligned}
        \|e^{-t\mathcal{A}}g_0\|_{\dot{\mathcal{F}}_{\nu}^{1,1}}&=2\sum_{k\geq1}|k| e^{\nu|k|t}e^{-|a_k| t}|\widehat{g}_0(k)|\leq \|g_0\|_{\dot{\mathcal{F}}^{1,1}}.
    \end{aligned}
\end{equation*}
Analogously, 
\begin{equation*}
    \begin{aligned}
        \|F(t)\|_{\dot{\mathcal{F}}_{\nu}^{1,1}}&\leq2\sum_{k\geq1}|k|e^{\nu|k|t}\int_0^t e^{-|a_k| (t-\tau)}|\widehat{g}(k)|d\tau\\
  &\leq2\sum_{k\geq1}|k|\int_0^t e^{\nu|k|(t-\tau)}e^{-|a_k| (t-\tau)}e^{\nu|k|\tau}|\widehat{g}(k)|d\tau
                \leq \int_0^t \|g(\tau)\|_{\dot{\mathcal{F}}^{1,1}_\nu}d\tau.
    \end{aligned}
\end{equation*}
Next,
\begin{equation*}
    \begin{aligned}
        \int_0^t\|e^{-\tau\mathcal{A}}g_0\|_{\dot{\mathcal{F}}_{\nu}^{4,1}}d\tau&=\int_0^t2\sum_{k\geq1}|k|^{3} e^{-|a_k| \tau}e^{\nu|k|\tau}|k||\widehat{g}_0(k)|d\tau\\
        &=-\int_0^t2\sum_{k\geq1}\frac{|k|^3}{|a_k|}\partial_\tau (e^{-|a_k| \tau})e^{\nu|k|\tau}|k||\widehat{g}_0(k)|d\tau,
    \end{aligned}
\end{equation*}
and integrating by parts gives 
\begin{equation*}
    \begin{aligned}
        \int_0^t\|e^{-\tau\mathcal{A}}g_0\|_{\dot{\mathcal{F}}_{\nu}^{4,1}}d\tau&=\int_0^t2\nu\sum_{k\geq1}\frac{|k|^3}{|a_k|}e^{-|a_k| \tau}e^{\nu|k|\tau}|k|^2|\widehat{g}_0(k)|d\tau\\
        &\quad-\Big[2\sum_{k\geq1}e^{-|a_k| \tau}e^{\nu|k|\tau}\frac{|k|^3}{|a_k|}|k||\widehat{g}_0(k)|\Big]_{\tau=0}^{\tau=t}.
    \end{aligned}
\end{equation*}
Therefore,
\begin{equation*}
    \begin{aligned}
        \int_0^t\|e^{-\tau\mathcal{A}}g_0\|_{\dot{\mathcal{F}}_{\nu}^{4,1}}d\tau&\leq C(\nu)\|g_0\|_{\dot{\mathcal{F}}^{1,1}},
    \end{aligned}
\end{equation*}
    where we have used that $|a_k|\simeq |k|^3$ and $\nu$ is taken small enough.
Thus the first estimate in \eqref{semig2} follows. Also, 
\begin{equation*}
    \begin{aligned}
        \int_0^t\|F(\tau)\|_{\dot{\mathcal{F}}_{\nu}^{4,1}}d\tau&=\int_0^t2\sum_{k\geq1} |k|^{4}e^{\nu|k|\tau}\int_0^\tau e^{-(\tau-w)|a_k|}|\widehat{g}(w,k)|dwd\tau\\
        &=2\sum_{k\geq1} \int_0^t\Big(\int_0^\tau|k|^3 e^{-(|a_k|-\nu|k|)(\tau-w)}|k|e^{\nu |k|w}|\widehat{g}(w,k)| dw\Big)d\tau\\
        &\leq C(\nu)2\sum_{k\geq1}\int_0^t |k|e^{\nu |k| \tau}|\widehat{g}(\tau,k)| d\tau\\
        &\leq C(\nu)\int_0^t \|g(\tau)\|_{\dot{\mathcal{F}}_{\nu}^{1,1}}d\tau,
        \end{aligned}
\end{equation*}
    where we have used again that $|a_k|\simeq |k|^3$ and Young's inequality for convolutions.
    
\end{proof}

\begin{lemma}[Interpolation]\label{interp_lem} Let $0 \leq s_1 \leq s_2$, $\theta\in[0,1]$ and $s= \theta s_{1} + (1-\theta)s_{2}$. Then,
    \begin{align}
        \label{Wienerinterpolation}
            \|f\|_{\fsonenu} &\leq \|f\|_{\dot{\mathcal{F}}^{s_{1},1}_\nu}^{\theta}\|f\|_{\dot{\mathcal{F}}^{s_{2},1}_\nu}^{1-\theta},\\
                \label{Wienerorder}
\|f\|_{\dot{\mathcal{F}}^{s_{1},1}_\nu}&\leq \|f\|_{\dot{\mathcal{F}}^{s_{2},1}_\nu}.    
    \end{align}
\end{lemma}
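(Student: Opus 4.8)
The plan is to deduce both inequalities straight from the definition of the seminorm, namely
\begin{equation*}
\|f\|_{\fsonenu}=\sum_{k\in\mathbb{Z}\setminus\{0\}}e^{\nu|k|t}|k|^{s}|\widehat{f}(k)|,
\end{equation*}
using only two elementary facts: that $|k|\geq1$ for every $k\in\mathbb{Z}\setminus\{0\}$, and that the power function $s\mapsto|k|^{s}$ is multiplicative in the exponent, i.e. $|k|^{\theta s_{1}+(1-\theta)s_{2}}=(|k|^{s_{1}})^{\theta}(|k|^{s_{2}})^{1-\theta}$. Neither step needs any information about $f$ beyond the absolute summability encoded in the spaces.

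For \eqref{Wienerorder} I would argue term by term: since $s_{1}\leq s_{2}$ and $|k|\geq1$ on the summation index set, we have $|k|^{s_{1}}\leq|k|^{s_{2}}$, hence $e^{\nu|k|t}|k|^{s_{1}}|\widehat{f}(k)|\leq e^{\nu|k|t}|k|^{s_{2}}|\widehat{f}(k)|$ for each nonzero $k$, and summing in $k$ gives $\|f\|_{\dot{\mathcal{F}}^{s_{1},1}_{\nu}}\leq\|f\|_{\dot{\mathcal{F}}^{s_{2},1}_{\nu}}$ with constant $1$. This also ensures that if the right-hand side of \eqref{Wienerinterpolation} is finite then every quantity appearing there is finite, so that Hölder's inequality may be legitimately invoked below.

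For \eqref{Wienerinterpolation}, the endpoint cases $\theta\in\{0,1\}$ are trivial (the inequality is an identity), so I would assume $\theta\in(0,1)$. Writing $s=\theta s_{1}+(1-\theta)s_{2}$ and splitting the exponential weight and the coefficient $|\widehat{f}(k)|$ in the same way, each summand factors as
\begin{equation*}
e^{\nu|k|t}|k|^{s}|\widehat{f}(k)|=\Big(e^{\nu|k|t}|k|^{s_{1}}|\widehat{f}(k)|\Big)^{\theta}\Big(e^{\nu|k|t}|k|^{s_{2}}|\widehat{f}(k)|\Big)^{1-\theta}.
\end{equation*}
Summing over $k\in\mathbb{Z}\setminus\{0\}$ and applying Hölder's inequality with conjugate exponents $1/\theta$ and $1/(1-\theta)$ then yields
\begin{equation*}
\sum_{k\neq0}e^{\nu|k|t}|k|^{s}|\widehat{f}(k)|\leq\Big(\sum_{k\neq0}e^{\nu|k|t}|k|^{s_{1}}|\widehat{f}(k)|\Big)^{\theta}\Big(\sum_{k\neq0}e^{\nu|k|t}|k|^{s_{2}}|\widehat{f}(k)|\Big)^{1-\theta},
\end{equation*}
which is precisely $\|f\|_{\fsonenu}\leq\|f\|_{\dot{\mathcal{F}}^{s_{1},1}_{\nu}}^{\theta}\|f\|_{\dot{\mathcal{F}}^{s_{2},1}_{\nu}}^{1-\theta}$.

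There is no genuine obstacle in this lemma; the only matters deserving a comment are the degenerate exponents $\theta\in\{0,1\}$ (handled separately as noted) and the observation that $|k|\geq1$ on the index set, which is what forces the homogeneous weight $|k|^{s}$ to be monotone in $s$ there and hence makes \eqref{Wienerorder} hold with no constant. The very same computations apply verbatim with $\nu=0$ and for the nonhomogeneous norms $\|\cdot\|_{\mathcal{F}^{s,1}}$, the added $|\widehat{f}(0)|$ term behaving like a $|k|=1$ mode for the monotonicity and being unaffected by the interpolation splitting.
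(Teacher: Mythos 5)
Your proof is correct and follows essentially the same route as the paper: the ordering inequality by termwise monotonicity of $|k|^{s}$ on $|k|\geq 1$, and the interpolation inequality by factoring each summand and applying H\"older's inequality with exponents $1/\theta$ and $1/(1-\theta)$. Your extra remarks on the endpoint cases $\theta\in\{0,1\}$ and finiteness are harmless additions but not a different method.
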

\begin{proof}
The interpolation inequality follows from Holder's inequality with $1/p = \theta$ and $1/q = 1- 1/p = 1-\theta$:
\begin{align*}
         \|f\|_{\fsonenu} &= \sum_{k\in\mathbb{Z}}e^{\nu|k|t}|k|^{s}|\hat{f}(k)|\\
         &=  \sum_{k\in\mathbb{Z}}e^{\nu|k|t \theta}|k|^{s_{1}\theta}|\hat{f}(k)|^{\theta}e^{\nu|k|t (1-\theta)}|k|^{s_{2}(1-\theta)}|\hat{f}(k)|^{1-\theta}\\
         &\leq \|f\|_{\dot{\mathcal{F}}^{s_{1},1}_\nu}^{\theta}\|f\|_{\dot{\mathcal{F}}^{s_{2},1}_\nu}^{1-\theta}.
    \end{align*}
    The second fact is a direct consequence of $|k|^{s_{1}}\leq |k|^{s_{2}}$ for all $|k|\geq 1$.
\end{proof}

\section{Estimates on General Nonlinear Terms}\label{lemmasection}

In this section, we will prove general estimates that will be applied in the later parts of the paper to bound the nonlinear terms appearing in both of the vorticity equation and evolution equation. 

We will first need the following lemma:

\begin{lemma}\label{lemmaI}
	Let $n\geq0$, $k$, $k_1,\dots, k_n$ be integers numbers. Then, the following bounds hold: 
	\begin{equation*}
	\begin{aligned}
	&\|I_1(k,k_1,\dots,k_n)\|_{L^\infty}=\|\int_{\mathbb{T}}\frac{\sin{(k\beta/2)}}{2\sin{(\beta/2)}}\prod_{j=0}^{n}\frac{\sin{(k_j\beta/2)}}{k_j\sin{(\beta/2)}}d\beta\|_{L^\infty}\leq 4,\\
	&\|I_2(k,k_1,\dots,k_n)\|_{L^\infty}=\|\int_{\mathbb{T}}\frac{\cos{(\beta/2)}\sin{(k\beta/2)}}{2\sin{(\beta/2)}}\prod_{j=0}^{n}\frac{\sin{(k_j\beta/2)}}{k_j\sin{(\beta/2)}}d\beta\|_{L^\infty}\leq \frac{10}{3}.
	\end{aligned}
	\end{equation*}
\end{lemma}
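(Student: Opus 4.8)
The plan is to pass to the half‑angle variable and exploit that each normalized Dirichlet‑type factor $\tfrac{\sin(k_j\beta/2)}{k_j\sin(\beta/2)}$ is, after normalization, a \emph{convex combination of exponentials}. Substituting $t=\beta/2$ (so $d\beta=2\,dt$ and $t\in(-\pi/2,\pi/2]$) turns the two quantities into
\[
I_1=\int_{-\pi/2}^{\pi/2}\frac{\sin(kt)}{\sin t}\prod_{j=0}^n\frac{\sin(k_jt)}{k_j\sin t}\,dt,\qquad
I_2=\int_{-\pi/2}^{\pi/2}\frac{\cos t\,\sin(kt)}{\sin t}\prod_{j=0}^n\frac{\sin(k_jt)}{k_j\sin t}\,dt,
\]
and since $\sin$ is odd we may assume all the integers are positive (if $k=0$ both integrals vanish). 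The key elementary identity is $\frac{\sin(Mt)}{\sin t}=\sum_{r=0}^{M-1}e^{i(2r-M+1)t}$ for a positive integer $M$, a sum of exactly $M$ exponentials with frequencies in an arithmetic progression of step $2$. Consequently each factor $\tfrac{\sin(k_jt)}{k_j\sin t}$ has nonnegative Fourier coefficients summing to $1$, and so does the (finite) product $\prod_{j=0}^n\tfrac{\sin(k_jt)}{k_j\sin t}=\sum_{\ell\in\mathbb{Z}}p_\ell\cos(\ell t)$, with $p_\ell\ge0$, $p_{-\ell}=p_\ell$, $\sum_\ell p_\ell=1$.

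By linearity and the triangle inequality it then suffices to bound, uniformly in $\ell$, the integrals obtained by replacing the product by a single $\cos(\ell t)$. Using $2\cos(\ell t)\sin(kt)=\sin((k+\ell)t)+\sin((k-\ell)t)$, and additionally $2\cos t\,\sin(kt)=\sin((k+1)t)+\sin((k-1)t)$ for $I_2$, everything reduces to the one‑parameter family
\[
g(N):=\int_{-\pi/2}^{\pi/2}\frac{\sin(Nt)}{\sin t}\,dt,
\]
namely $|I_1|\le\frac12\sup_\ell\bigl(|g(k+\ell)|+|g(k-\ell)|\bigr)$ and $|I_2|\le\frac14\sup_\ell\bigl|h(k+\ell)+h(k-\ell)\bigr|$ with $h(m):=g(m+1)+g(m-1)$. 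A direct Fourier computation gives $g$ in closed form: $g$ is odd, $g(0)=0$, and for $N\ge1$ one has $g(N)=\pi$ when $N$ is odd and $g(N)=4P(N/2)$ when $N$ is even, where $P(M)=\sum_{i=0}^{M-1}\frac{(-1)^i}{2i+1}$ is the $M$‑th partial sum of the Leibniz series. Since these partial sums have decreasing terms, $\tfrac23=P(2)\le P(M)\le P(1)=1$; in particular $|g(N)|\le 4$, which immediately yields $|I_1|\le 4$.

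The first bound is therefore painless; the delicate point — and the only real obstacle — is the sharp constant $\tfrac{10}{3}$ for $I_2$, which amounts to $|h(m)|\le\tfrac{20}{3}$ for all integers $m$. One splits by parity: if $m$ is even then $m\pm1$ are odd, so $|h(m)|\le 2\pi<\tfrac{20}{3}$; if $m=\pm1$ then $h(m)=\pm 4$; and if $m$ is odd with $|m|\ge3$ then $m\pm1$ are even and $|h(m)|=4\bigl(P(n)+P(n+1)\bigr)$ with $n=\tfrac{|m|-1}{2}\ge1$. So one is reduced to $\sup_{n\ge1}\bigl(P(n)+P(n+1)\bigr)=\tfrac53$. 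This in turn follows from the standard oscillation of the Leibniz partial sums — the even‑indexed ones increase to $\pi/4$, hence $P(2j)<\pi/4$, and the odd‑indexed ones decrease to $\pi/4$, hence $P(2j+1)\le P(3)=\tfrac{13}{15}$ — so for $n\ge2$ one has $P(n)+P(n+1)<\tfrac{\pi}{4}+\tfrac{13}{15}<\tfrac53$ (using $\pi<3.2$), while $P(1)+P(2)=\tfrac53$. Combining the cases gives $|h(m)|\le\tfrac{20}{3}$, hence $|I_2|\le\tfrac14\cdot\tfrac{40}{3}=\tfrac{10}{3}$. Both constants are attained (e.g.\ $n=0$, $k_0=1$, with $k=2$ and $k=3$ respectively), so no refinement of the method is needed. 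The main source of potential error is the sign/parity bookkeeping in the $h$‑estimate and the verification that the claimed suprema are genuinely attained where stated, but each ingredient is elementary.
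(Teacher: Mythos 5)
Your proof is correct, and its backbone coincides with the paper's: you expand each normalized factor $\frac{\sin(k_j\beta/2)}{k_j\sin(\beta/2)}$ into a sum of exponentials with nonnegative weights summing to one, so that the whole product becomes a convex combination and the bound reduces to uniform control of elementary Dirichlet-type integrals. Where you genuinely diverge is in how those integrals are handled, and in completeness. The paper keeps the two-parameter family $I_1(k,A)=2\int_0^{\pi/2}\frac{\sin(kx)\cos(Ax)}{\sin x}\,dx$ and evaluates it through a chain of recursions; you instead use product-to-sum identities to collapse everything onto the one-parameter integral $g(N)=\int_{-\pi/2}^{\pi/2}\frac{\sin(Nt)}{\sin t}\,dt$, evaluated by parity ($\pi$ for odd $N$, $4P(N/2)$ with $P$ a Leibniz partial sum for even $N$), so the alternating-series bookkeeping $\tfrac23\le P(M)\le 1$ gives $|g|\le 4$ and hence $|I_1|\le 4$ immediately. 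More substantively, you actually prove the constant $\tfrac{10}{3}$: the paper only shows the argument for $I_1$ and asserts that $I_2$ "is bounded in a similar manner," whereas your case analysis on $h(m)=g(m+1)+g(m-1)$ (even $m$ gives $|h|\le 2\pi$, $m=\pm1$ gives $|h|=4$, odd $|m|\ge 3$ gives $|h|=4(P(n)+P(n+1))\le\tfrac{20}{3}$ via $\sup_{n\ge1}(P(n)+P(n+1))=P(1)+P(2)=\tfrac53$) supplies exactly the missing verification, and each step checks out, including $|I_2|\le\tfrac14\sup_\ell|h(k+\ell)+h(k-\ell)|\le\tfrac{10}{3}$. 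Your sharpness examples are also the right ones: $n=0$, $k_0=1$ with $k=2$ for $I_1$ (the paper's "$k=0$" there appears to be a typo, since $k=0$ makes $I_1$ vanish) and $k=3$ for $I_2$. The only loose end, shared with the paper's own proof, is the degenerate case $k_j=0$, where the factor is formally $0/0$; in the application it always appears multiplied by $k_j$, so nothing is lost.
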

\begin{proof}

Both integrals are bounded in a similar manner. The bounds are optimal since they are attained for $n=0$ and $k=0$ in the first case, $k=3$ for $I_2$. We show the proof for $I_1$. First, notice that we can assume all $k_j\geq 1$, so we rewrite the quotients in the product as follows	
	\begin{equation*}
	\begin{aligned}
	\frac{\sin{(k_j\beta/2)}}{\sin{(\beta/2)}}&=\frac{e^{ik_j\beta/2}-e^{-ik_j\beta/2}}{e^{i\beta/2}-e^{-i\beta/2}}=\frac{e^{ik_j\beta/2}(1-e^{-ik_j\beta})}{e^{i\beta/2}(1-e^{-i\beta})}\\
	&=e^{i(k_j-1)\beta/2}\sum_{m=0}^{k_j-1}e^{-i\beta m}=\sum_{m=0}^{k_j-1}e^{i(-2m+k_j-1)\beta/2},
	\end{aligned}
	\end{equation*}
	so that
	\begin{equation*}
	\begin{aligned}
	\prod_{j=0}^n\frac{\sin{(k_j\beta/2)}}{k_j\sin{(\beta/2)}}&=\prod_{j=0}^n\frac{1}{k_j}\sum_{m_j=0}^{k_j-1}e^{j(-2m_j+k_j-1)\beta/2}\\
	&=\frac{1}{k_0k_1\dots k_n}\sum_{m_0=0}^{k_0-1}\dots\sum_{m_n=0}^{k_n-1}e^{i\beta/2(-2(m_0+\dots+m_n)+k_0+\dots+k_n-n-1)}.
	\end{aligned}
	\end{equation*}
	Denote
	\begin{equation*}
	A=-2(m_0+\dots+m_n)+k_0+\dots+k_n-n-1\in\mathbb{Z}.
	\end{equation*}
	Then, the expression of $I_1$ is reduced to
	\begin{equation*}
	\begin{aligned}
	I_1&=\frac{1}{k_0k_1\dots k_n}\sum_{m_0=0}^{k_0-1}\dots\sum_{m_n=0}^{k_n-1} \int_{\mathbb{T}} \frac{\sin{(k\beta/2)}}{2\sin{(\beta/2)}}e^{iA\beta/2}d\beta\\
	&=\frac{1}{k_0k_1\dots k_n}\sum_{m_0=0}^{k_0-1}\dots\sum_{m_n=0}^{k_n-1} \int_{\mathbb{T}} \frac{\sin{(k\beta/2)}\cos{(A\beta/2)}}{2\sin{(\beta/2)}}d\beta,
	\end{aligned}
	\end{equation*}
	which is bounded by
	\begin{equation*}
	|I_1|\leq \Big| \int_{\mathbb{T}} \frac{\sin{(k\beta/2)}\cos{(A\beta/2)}}{2\sin{(\beta/2)}}d\beta\Big|.
	\end{equation*}
	Consider this last integral as a function in $k, A\in\mathbb{Z}$. After a change of variables, we have to deal with the following integral
	\begin{equation*}
	I_1(k,A)=2\int_{0}^{\pi/2} \frac{\sin{(kx)}\cos{(Ax)}}{\sin{(x)}}dx.
	\end{equation*}
	We now proceed to compute it. First, $I_1(0,A)=0$ for all $A$. Now, for $A=0$, 
	\begin{equation*}
	\begin{aligned}
	I_1(k,0)&=2\int_{0}^{\pi/2}\! \frac{\sin{(kx)}}{\sin{(x)}}d\beta=2\int_{0}^{\pi/2}\!\frac{\sin{((k\!-\!1)x)}\cos{(x)}\!+\!\sin{(x)}\cos{((k\!-\!1)x)}}{\sin{(x)}}dx\\
	&=2\!\int_{0}^{\pi/2}\!\left(\frac{\sin{((k\!-\!2)x)}\cos^2{(x)}}{\sin{(x)}}+\cos{((k\!-\!2)x)}\cos{(x)}\!+\!\cos{((k\!-\!1)x)}\right)dx\\
	&=I_1(0,k-2)+2\int_0^{\pi/2}\cos{((k-1)x)}dx+2\frac{\sin{((k-1)\pi/2)}}{k-1}\\
	&=I_1(0,k-2)+4\frac{\sin{((k-1)\pi/2)}}{k-1}.
	\end{aligned}
	\end{equation*}
	Since $I_1(0,0)=0$ and $I_1(1,0)=\pi$,
	\begin{equation*}
	I_1(k,0)=\left\{ 
	\begin{aligned}
	&\sum_{j=1}^l 4\frac{(-1)^{j+1}}{2j-1}\hspace{0.5cm}\text{if } k=2l,\\
	&\pi \hspace{2.25cm}\text{if }k=2l+1,
	\end{aligned}\right.
	\end{equation*}
	so $|I_1(k,0)|\leq 4$ for all $k\in\mathbb{Z}$. Finally, when $k,A>0$,
	\begin{equation*}
	\begin{aligned}
	I_1(k,A)&=2\int_0^{\pi/2}\frac{\sin{((k\!-\!1)x)}\cos{(x)}\cos{(Ax)}}{\sin{(x)}}dx\!+\!2\int_0^{\pi/2}\!\!\cos{((k\!-\!1)x)}\cos{(Ax)}dx\\
	&=2\int_0^{\pi/2}\frac{\sin{((k-1)x)}\cos{((A+1)x)}}{\sin{(x)}}dx+2\int_0^{\pi/2}\!\!\sin{((k-1)x)}\sin{(Ax)}dx\\
	&\quad+2\int_0^{\pi/2}\cos{((k-1)x)}\cos{(Ax)}dx\\
	&=I_1(k-1,A+1)+2\frac{\sin{((A-k+1)\frac\pi2)}}{A-k+1}.
	\end{aligned}
	\end{equation*}
	Therefore, 
	\begin{equation*}
	\begin{aligned}
	I_1(k-1,A+1)=I_1(k,A)-2\frac{\sin{((A-k+1)\frac\pi2)}}{A-k+1},
	\end{aligned}
	\end{equation*}
	which yields
	\begin{equation}\label{aux1}
	\begin{aligned}
	I_1(k,A)&=I_1(k+1,A-1)-2\frac{\sin{((A-k-1)\frac\pi2)}}{A-k-1}=\dots\\
	&=I_1(k+A,0)-2\sum_{n=0}^{A-1}\frac{\sin{((A-k-2n-1)\frac\pi2)}}{A-k-2n-1}.
	\end{aligned}
	\end{equation}
	On the other hand,
	\begin{equation*}
	\begin{aligned}
	\int_0^{\pi/2}\frac{\sin{(kx)}\cos{(Ax)}}{\sin{(x)}}dx=\int_0^{\pi/2}\frac{\sin{((A+k)x)}}{\sin{(x)}}dx-\int_0^{\pi/2}\frac{\sin{(Ax)}\cos{(kx)}}{\sin{(x)}}dx,
	\end{aligned}
	\end{equation*}
	which means that
	\begin{equation*}
	I_1(k,A)=I_1(k+A,0)-I_1(A,k).
	\end{equation*}
	Combining this with \eqref{aux1}, we obtain that
	\begin{equation*}
	I_1(A,k)=2\sum_{n=0}^{A-1}\frac{\sin{((A-k-2n-1)\frac\pi2)}}{A-k-2n-1},
	\end{equation*}
	so, exchanging the roles of $k$ and $A$,
	\begin{equation*}
	I_1(k,A)=2\sum_{n=0}^{k-1}\frac{\sin{((k-A-2n-1)\frac\pi2)}}{k-A-2n-1},
	\end{equation*}
	we finally have that
	\begin{equation*}
	I_1(k,A)=\left\{
	\begin{aligned}
	&\pi \hspace{1cm}\text{ if }k-A \text{ is odd},\\
	&2 \sum_{n=0}^{k-1}\frac{(-1)^{l-n+1}}{2(l-n)-1} \text{ if } k-A=2l.
	\end{aligned}\right.
	\end{equation*}
	Therefore, we conclude that $|I_1|\leq |I_1(k,A)|\leq 4$ for all $k,A\in \mathbb{Z}$.
	
\end{proof}

We can apply this lemma to bound each frequency of nonlinear terms. For simplicity, we introduce the following notation for convolutions of $n$ functions:
\begin{equation*}
    f_{1}\ast f_{2} \ast \cdots \ast f_{n} = \ast_{j=1}^{n} f_{j},
\end{equation*}
regardless of the domain of all the functions.
\begin{lemma}\label{nonlinearfourierlemma}
Consider the two functions of the form
\begin{equation*}
   I(\alpha) = \textup{\pv} \int_{\mathbb{T}}\prod_{j=1}^{l}f_{j}(\alpha-\beta) \prod_{j=l+1}^{n}\Delta_{\beta}f_{j}(\alpha) \frac{d\beta}{2\sin(\beta/2)},
\end{equation*}
and
\begin{equation*}
   J(\alpha) = \textup{\pv} \int_{\mathbb{T}}\prod_{j=1}^{l}f_{j}(\alpha-\beta) \prod_{j=l+1}^{n}\Delta_{\beta}f_{j}(\alpha) \frac{d\beta}{2\tan(\beta/2)}.
\end{equation*}
Then, the Fourier transform of these functions obey the following bounds:
\begin{equation*}
    |\hat{I}(k)|\leq 4 \left(\ast_{j=1}^{l} |\hat{f_{j}}(k)|\right) \ast \left(\ast_{j=l+1}^{j=n} |k||\hat{f_{j}}(k)|\right),
\end{equation*}
and
\begin{equation*}
    |\hat{J}(k)| \leq \frac{10}{3}\left(\ast_{j=1}^{l} |\hat{f_{j}}(k)|\right) \ast \left(\ast_{j=l+1}^{j=n} |k||\hat{f_{j}}(k)|\right).
\end{equation*}
\end{lemma}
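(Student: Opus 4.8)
The plan is to expand everything in Fourier series and reduce the $k$-th Fourier coefficient of $I$ (and $J$) to a sum of integrals of exactly the type controlled by Lemma \ref{lemmaI}. First I would write $f_j(\alpha)=\sum_{k_j}\widehat{f_j}(k_j)e^{ik_j\alpha}$ for each $j$, and observe that for the ``shifted'' factors $f_j(\alpha-\beta)$ with $1\le j\le l$ the dependence on $\beta$ is simply $e^{-ik_j\beta}$, while for the difference-quotient factors $\Delta_\beta f_j(\alpha)$ with $l+1\le j\le n$ one has the elementary identity
\begin{equation*}
\Delta_\beta f_j(\alpha)=\sum_{k_j}\widehat{f_j}(k_j)\,\frac{e^{ik_j(\alpha-\beta)}-e^{ik_j\alpha}}{2\sin(\beta/2)}
=\sum_{k_j}\widehat{f_j}(k_j)e^{ik_j\alpha}\,e^{-ik_j\beta/2}\,\frac{e^{-ik_j\beta/2}-e^{ik_j\beta/2}}{2\sin(\beta/2)}
=-\sum_{k_j}\widehat{f_j}(k_j)e^{ik_j\alpha}\,e^{-ik_j\beta/2}\,\frac{\sin(k_j\beta/2)}{\sin(\beta/2)}.
\end{equation*}
Substituting these expansions into the definition of $I(\alpha)$, interchanging the (absolutely convergent) sums with the principal value integral, and collecting the factor $e^{i(k_1+\cdots+k_n)\alpha}$, the coefficient $\widehat I(k)$ becomes a sum over all tuples $(k_1,\dots,k_n)$ with $k_1+\cdots+k_n=k$ of $\prod_j\widehat{f_j}(k_j)$ times a $\beta$-integral.

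The next step is to massage that $\beta$-integral into the shape appearing in Lemma \ref{lemmaI}. After the substitution the integrand carries: the weight $1/(2\sin(\beta/2))$ from the kernel; a factor $\prod_{j\le l}e^{-ik_j\beta}$ from the shifted terms; and, from each difference factor, $-e^{-ik_j\beta/2}\sin(k_j\beta/2)/\sin(\beta/2)$. Writing each $e^{-ik_j\beta}=e^{-ik_j\beta/2}\cdot e^{-ik_j\beta/2}$ and combining all the half-angle exponentials into a single $e^{-iA\beta/2}$ with $A=k_1+\cdots+k_n=k$ (using $\sum k_j=k$), and recalling $e^{-iA\beta/2}=\cos(A\beta/2)-i\sin(A\beta/2)$ while the remaining integrand is even in $\beta$ so the $\sin$ part integrates to zero, one is left with exactly $\int_{\mathbb T}\frac{\sin(?\,\beta/2)}{2\sin(\beta/2)}\prod_{j=l+1}^n\frac{\sin(k_j\beta/2)}{\sin(\beta/2)}\,d\beta$ type objects. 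Dividing and multiplying each difference factor by $k_j$ to match the normalization $\sin(k_j\beta/2)/(k_j\sin(\beta/2))$ in Lemma \ref{lemmaI} produces a $\prod_{j=l+1}^n |k_j|$ weight, and Lemma \ref{lemmaI} bounds the resulting integral by $4$ (respectively $10/3$ for $J$, where the kernel $1/(2\tan(\beta/2))=\cos(\beta/2)/(2\sin(\beta/2))$ introduces the extra $\cos(\beta/2)$ and hence the $I_2$ bound). Since the $k$-th coefficient is thus bounded by $4\sum_{k_1+\cdots+k_n=k}\prod_{j\le l}|\widehat{f_j}(k_j)|\prod_{j>l}|k_j||\widehat{f_j}(k_j)|$, which is precisely the $k$-th entry of the iterated convolution $\bigl(\ast_{j=1}^l|\widehat{f_j}|\bigr)\ast\bigl(\ast_{j=l+1}^n|k||\widehat{f_j}|\bigr)$, the claimed estimates follow; the case $n=l$ (no difference factors) is the degenerate instance where the product $\prod_{j=l+1}^n$ is empty and Lemma \ref{lemmaI} is applied with $n=0$ there.

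The only genuinely delicate point is the bookkeeping of the half-angle exponentials: one must check that \emph{all} the phase factors — two half-angles from each shifted $f_j$ and one from each difference $\Delta_\beta f_j$ — really do recombine into a single $e^{-iA\beta/2}$ with $A=\sum_{j=1}^n k_j = k$, and that the leftover integrand (a product of $\sin(k_j\beta/2)/\sin(\beta/2)$ for $j>l$ times $1/(2\sin(\beta/2))$, possibly times $\cos(\beta/2)$) is even in $\beta$ so that only $\cos(A\beta/2)$ survives. I expect no real obstacle beyond this careful sign and phase accounting, together with justifying the Fubini interchange, which is immediate from absolute convergence of the Wiener-type series combined with the integrability of the kernels against the bounded pieces (or, cleanly, by first truncating the Fourier series and passing to the limit). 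The constants $4$ and $10/3$ are inherited verbatim from Lemma \ref{lemmaI}.
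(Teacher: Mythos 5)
Your overall strategy is the same as the paper's: expand each factor in Fourier series, use that $\Delta_\beta$ acts as the multiplier $ik_j e^{-ik_j\beta/2}\tfrac{\sin(k_j\beta/2)}{k_j\sin(\beta/2)}$ (your expansion drops a unimodular factor $i$, which is harmless for absolute values), exchange the sums with the principal value integral, and bound the resulting $\beta$-integral via Lemma \ref{lemmaI}, so that the sum over frequency tuples is exactly the stated iterated convolution. However, the step you yourself flag as the only delicate one --- the parity and phase accounting --- is stated backwards, and as written the reduction to Lemma \ref{lemmaI} does not follow. The leftover integrand, namely $\tfrac{1}{2\sin(\beta/2)}\prod_{j>l}\tfrac{\sin(k_j\beta/2)}{k_j\sin(\beta/2)}$ (times $\cos(\beta/2)$ in the case of $J$), is \emph{odd} in $\beta$, not even: each factor $\sin(k_j\beta/2)/\sin(\beta/2)$ is even, but the kernel $1/(2\sin(\beta/2))$ is odd. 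Hence, writing $e^{-iA\beta/2}=\cos(A\beta/2)-i\sin(A\beta/2)$, it is the \emph{cosine} part that integrates to zero (odd integrand, principal value over the symmetric torus), and the \emph{sine} part that survives; this is precisely what produces the numerator $\sin(A\beta/2)$ and lands you on integrals exactly of the form $I_1$ (resp.\ $I_2$) of Lemma \ref{lemmaI}, now with bounded integrands. If your parity claim were taken literally (integrand even, only $\cos(A\beta/2)$ survives), the surviving term would not be of the form treated in Lemma \ref{lemmaI} at all, so this sentence needs to be corrected rather than just tightened.

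A second, more minor point: your identification $A=k_1+\cdots+k_n=k$ is incorrect when $l\geq 1$. Each shifted factor contributes $e^{-ik_j\beta}=e^{-i(2k_j)\beta/2}$ (two half-angles, as you note) and each difference factor contributes $e^{-ik_j\beta/2}$, so $A=2\sum_{j\leq l}k_j+\sum_{j>l}k_j$, which differs from $k$ in general. This does not damage the estimate, because the bounds $4$ and $10/3$ in Lemma \ref{lemmaI} are uniform over all integer arguments, so all that is needed is $A\in\mathbb{Z}$ --- but the assertion $A=k$ should be removed. With these two corrections (odd leftover integrand, sine part survives, $A$ an arbitrary integer), your argument coincides with the paper's proof, including the factor $\prod_{j>l}|k_j|$ coming from the normalization $\sin(k_j\beta/2)/(k_j\sin(\beta/2))$ and the final convolution bound.
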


\begin{proof}
First, from the definition of $\Delta_{\beta}f(\alpha)$, we obtain that
    \begin{equation}\label{deltabetafourier}
\widehat{\Delta_{\beta}f}(\alpha)=\frac{1-e^{-ik\beta}}{2\sin{\left(\beta/2\right)}}\hat{f}(k)=m(k,\beta)\hat{f}(k),
\end{equation}
where
\begin{equation*}
m(k,\beta)=\frac{1-e^{-ik\beta/2}e^{-ik\beta/2}}{2\sin{\left(\beta/2\right)}}=\frac{e^{ik\beta/2}-e^{-ik\beta/2}}{2\sin{\left(\beta/2\right)}}e^{-ik\beta/2}=ik\frac{\sin{\left(k\beta/2\right)}}{k\sin{\left(\beta/2\right)}}e^{-ik\beta/2}.
\end{equation*}
Hence, we obtain that
\begin{align*}
    \hat{I}(k_{0}) &= \textup{\pv} \int_{\mathbb{T}}\left(\ast_{j=1}^{j=l}\hat{f}_{j}(k)e^{-ik\beta}\right) \ast \left(\ast_{j=l+1}^{j=n}m(k,\beta)\hat{f}_{j}(k) \right)(k_0)\frac{d\beta}{2\sin(\beta/2)}\\
    &= \textup{\pv} \int_{\mathbb{T}} \sum_{k_{j}\in\mathbb{Z}} \prod_{j=1}^{n} \hat{f_{j}}(k_{j-1}-k_{j}) \cdot \prod_{j=1}^{l}e^{-i(k_{j-1}-k_{j})\beta}\\
    &\hspace{0.75in}\cdot \prod_{j=l+1}^{n}m(k_{j-1}-k_{j},\beta) \cdot m(k_{n},\beta)\hat{f_n}(k_{n}) \ \frac{d\beta}{2\sin(\beta/2)}.
\end{align*}
Note that in the first line above, $k_{0}$ is the input into the convolution of the $n$ functions in that line. Next, we compute that
\begin{align*}
\prod_{j=1}^{l}e^{-i(k_{j-1}-k_{j})\beta}\cdot \prod_{j=l+1}^{n}m(k_{j-1}-k_{j},\beta) \cdot m(k_{n},\beta) &= e^{iA} \prod_{j=l+1}^{n}(k_{j-1}-k_{j}) \cdot k_{n} \cdot \frac{\sin(k_{n}\beta/2)}{k_{n}\sin(\beta/2)} \\
 &\hspace{1cm} \cdot \prod_{j=l+1}^{n} \frac{\sin((k_{j-1}-k_{j})\beta/2)}{(k_{j-1}-k_{j})\sin(\beta/2)},
\end{align*}
where $A$ is a sum of terms involving $\beta$ and $k_j$ for $j=1,\ldots,n$. Hence, for $I_{1} = I_{1}(k, k_1 - k_0, \ldots, k_{n-1}-k_n, k_n)$, we can use Lemma \ref{lemmaI} to obtain that
\begin{align*}
    |\hat{I}(k_{0})| &\leq \sum_{k_{j}\in\mathbb{Z}} \prod_{j=1}^{l}|\hat{f_{j}}(k_{j-1}-k_{j})|\prod_{j=l+1}^{n} |k_{j-1}-k_{j}||\hat{f_{j}}(k_{j-1}-k_{j})|  \cdot |k_n||\hat{f_n}(k_n)|\cdot \|I_{1}\|_{L^{\infty}}\\
    &\leq 4 \left(\left(\ast_{j=1}^{l} |\hat{f_{j}}(k)|\right) \ast \left(\ast_{j=l+1}^{j=n} |k||\hat{f_{j}}(k)|\right)\right)(k_{0}).
\end{align*}
This concludes the proof of the bound on $\hat{I}$. The proof for $\hat{J}$ is the same.
\end{proof}

Finally, as a corollary to Lemma \ref{nonlinearfourierlemma}, we obtain that
\begin{lemma}\label{nonlinearlemma}[Nonlinear estimate]
For the functions $I(\alpha)$ and $J(\alpha)$ described in Lemma \ref{nonlinearfourierlemma}, we have the bound
\begin{equation*}
\|I,J\|_{\fsonenu} \lesssim \sum_{i=1}^{l} \|f_{i}\|_{\fsonenu} \prod_{\substack{j=1\\ j\neq i}}^{l} \|f_{j}\|_{\fzeronenu}\prod_{j=l+1}^{n}\|f_{i}\|_{\foneonenu} + \sum_{i=l+1}^{n} \|f_{i}\|_{\fsplusonenu} \prod_{j=1}^{l}\|f_{j}\|_{\fzeronenu}\prod_{\substack{j=l+1\\ j\neq i}}^{n}\|f_{i}\|_{\foneonenu},
\end{equation*}
for any $0<s\leq 1$.
For $s=0$, we have
\begin{equation*}
\|I,J\|_{\fzeronenu} \lesssim \prod_{j=1}^{l} \|f_{j}\|_{\fzeronenu}\prod_{j=l+1}^{n}\|f_{i}\|_{\foneonenu}.
\end{equation*}
\end{lemma}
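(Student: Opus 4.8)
The plan is to derive the estimate directly from the frequency-by-frequency bounds already proved in Lemma~\ref{nonlinearfourierlemma}, using only two elementary facts about a tuple with $k=k_1+\cdots+k_n$: first, $|k|\le\sum_{j=1}^n|k_j|$, hence $e^{\nu|k|t}\le\prod_{j=1}^n e^{\nu|k_j|t}$; second, since $0\le s\le 1$, the map $x\mapsto x^s$ is subadditive on $[0,\infty)$, so $|k|^s\le\sum_{j=1}^n|k_j|^s$. Writing $a_j(k)=|\widehat{f_j}(k)|$ for $1\le j\le l$ and $b_j(k)=|k|\,|\widehat{f_j}(k)|$ for $l+1\le j\le n$, Lemma~\ref{nonlinearfourierlemma} reads $|\widehat I(k)|\le 4\,(a_1\ast\cdots\ast a_l\ast b_{l+1}\ast\cdots\ast b_n)(k)$, and the same with $\tfrac{10}{3}$ in place of $4$ for $|\widehat J(k)|$.

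I would first handle the case $0<s\le 1$. Multiplying the bound for $|\widehat I(k)|$ by $e^{\nu|k|t}|k|^s$, summing over $k$, expanding the convolution, and applying the two subadditivity estimates above, one reaches
\begin{equation*}
\|I\|_{\fsonenu}\le 4\sum_{i=1}^{n}\ \sum_{k_1,\dots,k_n\in\mathbb{Z}}|k_i|^s\Big(\prod_{j=1}^n e^{\nu|k_j|t}\Big)\prod_{j=1}^{l}a_j(k_j)\prod_{j=l+1}^{n}b_j(k_j).
\end{equation*}
For each fixed $i$ the multiple sum over $(k_1,\dots,k_n)$ factorizes into a product of $n$ one-dimensional sums. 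If $i\le l$, the distinguished factor is $\sum_{k_i}|k_i|^s e^{\nu|k_i|t}|\widehat{f_i}(k_i)|=\|f_i\|_{\fsonenu}$, the remaining index-$\le l$ factors are $\sum_{k_j}e^{\nu|k_j|t}|\widehat{f_j}(k_j)|=\|f_j\|_{\fzeronenu}$, and the index-$>l$ factors are $\sum_{k_j}e^{\nu|k_j|t}|k_j|\,|\widehat{f_j}(k_j)|=\|f_j\|_{\foneonenu}$. If $i>l$, the extra factor of $|k_i|$ carried by $b_i$ upgrades the distinguished factor to $\sum_{k_i}|k_i|^{s+1}e^{\nu|k_i|t}|\widehat{f_i}(k_i)|=\|f_i\|_{\fsplusonenu}$, everything else being unchanged. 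Summing over $i$ reproduces exactly the two sums in the statement, and the identical computation with $\tfrac{10}{3}$ gives the bound for $\|J\|_{\fsonenu}$.

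The case $s=0$ is the degenerate one: there is no derivative weight to redistribute, so only $e^{\nu|k|t}\le\prod_j e^{\nu|k_j|t}$ is used, the convolution sum factorizes at once, and one obtains $\|I\|_{\fzeronenu}\le 4\prod_{j=1}^{l}\|f_j\|_{\fzeronenu}\prod_{j=l+1}^{n}\|f_j\|_{\foneonenu}$, and likewise for $J$. I do not expect a genuine obstacle: the analytic content was already absorbed into Lemmas~\ref{lemmaI} and~\ref{nonlinearfourierlemma}, and the one place a hypothesis is truly used is the subadditivity $|k|^s\le\sum_j|k_j|^s$, which fails for $s>1$ and is exactly why the statement is restricted to $0\le s\le 1$; the remaining care is purely bookkeeping — tracking which summand carries the derivative weight and checking that the $k_j=0$ modes land in the inhomogeneous seminorms $\fzeronenu$ while the weighted ones land in the homogeneous seminorms $\foneonenu$, $\fsonenu$, $\fsplusonenu$.
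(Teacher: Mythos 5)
Your proposal is correct and follows essentially the same route as the paper: the frequency-wise bound of Lemma \ref{nonlinearfourierlemma}, subadditivity of $|k|^{s}$ for $0<s\leq 1$ together with $e^{\nu t|k|}\leq \prod_j e^{\nu t|k_j|}$, and then distribution of the weight among the convolution factors. The only cosmetic difference is that you factorize the multiple sum explicitly via Tonelli, whereas the paper phrases the same final step as Young's inequality for convolutions in $\ell^{1}$.
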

\begin{proof}
By Lemma \ref{nonlinearfourierlemma}, we have that
\begin{align*}
\|I\|_{\fsonenu} &\leq 4 \Big\|\left(\ast_{j=1}^{l} |\hat{f_{j}}(k)|\right) \ast \left(\ast_{j=l+1}^{j=n} |k||\hat{f_{j}}(k)|\right) \Big\|_{\fsonenu}\\
&= 4\Big\|e^{\nu t |k|} |k|^{s}\left(\ast_{j=1}^{l} |\hat{f_{j}}(k)|\right) \ast \left(\ast_{j=l+1}^{j=n} |k||\hat{f_{j}}(k)|\right) \Big\|_{\ell^{1}}.
\end{align*}
By the triangle inequality
\begin{equation*}
|k|^{s} \leq |k-k_{1}|^{s} + |k_{1}-k_{2}|^{s} + \ldots + |k_{2n-1}-k_{2n}|^{s} + |k_{2n}|^{s},
\end{equation*}
for any $0<s\leq 1$ and hence
\begin{equation*}
e^{t\nu |k|} \leq e^{t\nu |k-k_{1}|}\cdot \prod_{j=1}^{n}e^{t\nu |k_{j-1}-k_{j}|} \cdot e^{t\nu |k_{2n}|}.
\end{equation*}
Therefore,
\begin{align*}
\|I\|_{\fsonenu} &\leq 4\Big\| |k|^{s}\left(\ast_{j=1}^{l} e^{t\nu |k|}|\hat{f_{j}}(k)|\right) \ast \left(\ast_{j=l+1}^{j=n} |k|e^{t\nu |k|}|\hat{f_{j}}(k)|\right) \Big\|_{\ell^{1}}\\
&\leq 4 \sum_{i=1}^{l} \Big\|e^{t\nu |k|}|k|^{s}|\hat{f_{i}}(k)| \ast \left(\ast_{j\neq i, j=1}^{j=l} e^{t\nu |k|}|\hat{f_{j}}(k)|\right) \ast \left(\ast_{j=l+1}^{j=n} |k|e^{t\nu |k|}|\hat{f_{j}}(k)|\right) \Big\|_{\ell^{1}}\\
&\hspace{0.25in} + 4 \sum_{i=l+1}^{n} \Big\|\left(\ast_{j=1}^{j=l} e^{t\nu |k|}|\hat{f_{j}}(k)|\right) \ast \left(\ast_{j\neq i, j=l+1}^{j=n} |k|e^{t\nu |k|}|\hat{f_{j}}(k)|\right) \ast e^{t\nu |k|}|k|^{s+1}|\hat{f_{i}}(k)|\Big\|_{\ell^{1}}.
\end{align*}
Applying Young's inequality for convolutions in $\ell^{1}$ concludes the proof.

\end{proof}

\section{Vorticity Estimates}

For $f\in\mathcal{F}^{1,1}$,
recall the operator $\mathcal{D}[f]$ defined on the vorticity function $\tilde{\omega}$ in \eqref{vorticity_tilde}. Here we rewrite it as a linear operator on the space $Z$ \eqref{Z_space} of functions with mean zero. For $g\in Z$,
$\mathcal{D}[f](g)$ is given by
\begin{equation*}
\begin{aligned}
\mathcal{D}[f](g)(\alpha)&\!=\frac{1}{2\pi}\pv\!\int_{\mathbb{T}}\!\!\frac{(1\!+\!f(\alpha))(1\!+\!f(\alpha\!-\!\beta))\sin{\left(\beta/2\right)}\!+\!(1\!+\!f(\alpha))\Delta_{\beta} f(\alpha)}{(\Delta_\beta f(\alpha))^2\!+\!(1\!+\!f(\alpha))(1\!+\!f(\alpha\!-\!\beta))}\frac{g(\alpha\!-\!\beta)}{2\sin{\left(\beta/2\right)}}d\beta\\
&\quad-\frac{1}{2\pi}\pv\int_{\mathbb{T}}\frac{(1+f(\alpha-\beta))\partial_{\alpha}f(\alpha)\cos{\left(\beta/2\right)}}{(\Delta_\beta f(\alpha))^2+(1+f(\alpha))(1+f(\alpha-\beta))}\frac{g(\alpha-\beta)}{2\sin{\left(\beta/2\right)}}d\beta.
\end{aligned}
\end{equation*}
When $\|f\|_{W^{1,\infty}}$ is small enough, we can expand the above expression via a Taylor series for $f$. For example, in the first term of the first integral of $\mathcal{D}[f]$, we have
\begin{align*}
\frac{(1+f(\alpha))(1+f(\alpha-\beta))}{(\Delta_\beta f(\alpha))^2+(1+f(\alpha))(1+f(\alpha-\beta))}&=\frac{1}{1 + \frac{\Delta_\beta f(\alpha)^2}{(1+f(\alpha))(1+f(\alpha-\beta))}}\\
&=\sum_{n=0}^{\infty}(-1)^n \frac{\Delta_\beta f(\alpha)^{2n}}{(1+f(\alpha))^{n}(1+f(\alpha-\beta))^{n}}.
\end{align*}
Expanding the entire kernel in this manner, we can right
\begin{equation*}
   \mathcal{D}[f](g) = \mathcal{D}_0(g)+ \mathcal{D}_1[f](g) + \mathcal{D}_{\geq 2}[f](g),
\end{equation*}
where $D_{0}(g)$ is the zero order terms in $f$ in the Taylor expansion of the kernel, i.e. no dependence on $f$, and $\mathcal{D}_{1}$ are the first order terms, i.e. linear with respect to $f$. In the space of functions $Z$, the order zero term integrates away:
\begin{equation*}
\mathcal{D}_{0}(g) = \int_{\mathbb{T}}  \sin(\beta/2)\frac{g(\alpha-\beta)}{\sin(\beta/2)} d\beta = 0.
\end{equation*}
Hence:
\begin{equation*}
   \mathcal{D}[f](g) = \mathcal{D}_1[f](g) + \mathcal{D}_{\geq 2}[f](g).
\end{equation*}
The first order terms in the kernel of the operator give:
\begin{align*}
    \mathcal{D}_1[f](g) &= \frac{1}{2\pi}\pv\int_{\mathbb{T}}\left(\Delta_\beta f(\alpha)-\partial_{\alpha}f(\alpha)\cos(\beta/2)\right)\frac{g(\alpha-\beta)}{2\sin{\left(\beta/2\right)}}d\beta.
\end{align*}
Both $\mathcal{D}_{1}$ and the higher order terms $\mathcal{D}_{\geq 2}$ are of the form $I$ or $J$ found in Lemma \ref{nonlinearfourierlemma}. Hence, as a direct corollary of Lemma \ref{nonlinearlemma}, we obtain:
\begin{prop}\label{dnorm}
Let $\varepsilon>0$ be a small enough constant, $\nu\geq0$, and $f\in\mathcal{F}^{1,1}_\nu$ such that $\|f\|_{\mathcal{F}^{1,1}_\nu}\leq \varepsilon.$
Then,  the operator $\mathcal{D}[f]$ is a bounded linear operator in $Z_\nu$ \eqref{Z_space}: 
\begin{equation*}
\mathcal{D}[f]: Z_\nu \rightarrow Z_\nu,
\end{equation*}
which satisfies the bound
\begin{align*}
    \|\mathcal{D}[f]\|_{Z_\nu \rightarrow Z_\nu} &\lesssim_1 \|f\|_{\mathcal{F}^{1,1}_\nu}.
\end{align*}
\end{prop}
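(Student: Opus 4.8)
The plan is to derive the norm estimate by substituting the Taylor expansion of the kernel of $\mathcal{D}[f]$ into Lemma \ref{nonlinearlemma}, and to get the preservation of zero mean from the identity $\mathcal{D}[f](g)=-BR[z](g)\cdot\partial_\alpha z$. I start with the expansion. Writing the denominator of $\mathcal{D}[f]$ as $1+\mathcal{R}(\alpha,\beta)$ with $\mathcal{R}=f(\alpha)+f(\alpha-\beta)+f(\alpha)f(\alpha-\beta)+\Delta_\beta f(\alpha)^{2}$, and noting that $\|f\|_{\mathcal{F}^{1,1}_\nu}\le\varepsilon$ makes $\|\mathcal{R}\|_{L^{\infty}}$ small, I expand $(1+\mathcal{R})^{-1}=\sum_{n\ge0}(-\mathcal{R})^{n}$, multiply by the degree-two polynomial numerator, and expand the whole kernel into monomials in $f(\alpha)$, $\partial_\alpha f(\alpha)$, $f(\alpha-\beta)$ and $\Delta_\beta f(\alpha)$, each carrying at most one factor $\sin(\beta/2)$ or $\cos(\beta/2)$. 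The unique degree-zero monomial is precisely $\mathcal{D}_{0}(g)$, which vanishes on $Z$, so every remaining monomial has total degree $d\ge1$ in $f$.

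Next I bring each monomial into the form of Lemma \ref{nonlinearfourierlemma}. The factors $f(\alpha)^{p}$ and $(\partial_\alpha f(\alpha))^{e}$ depend only on $\alpha$ and come out of the $\beta$-integral; what remains multiplies $g(\alpha-\beta)/(2\sin(\beta/2))$. If the monomial carries no trigonometric factor the $\beta$-integral is of the form $I$; if it carries $\cos(\beta/2)$ then $\cos(\beta/2)/(2\sin(\beta/2))=1/(2\tan(\beta/2))$ makes it of the form $J$; in both cases the $f_{j}(\alpha-\beta)$-slots of the template are filled by $g$ and powers of $f$, and the $\Delta_\beta f_{j}(\alpha)$-slots by powers of $\Delta_\beta f(\alpha)$. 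If the monomial carries $\sin(\beta/2)$, so that $\sin(\beta/2)/(2\sin(\beta/2))=1/2$: when a factor $\Delta_\beta f(\alpha)$ is also present I rewrite one copy as $(f(\alpha-\beta)-f(\alpha))/(2\sin(\beta/2))$, which restores a kernel $1/(2\sin(\beta/2))$ at the cost of moving an extra power of $f$ into the $\alpha$-prefactor, again producing form-$I$ integrals; when no $\Delta_\beta f(\alpha)$ factor is present the $\beta$-integral degenerates to a constant proportional to $\widehat{f^{q}g}(0)$, which is $0$ if $q=0$ (recall $g\in Z$) and otherwise bounded trivially by $\|f\|_{\fzeronenu}^{q}\|g\|_{\fzeronenu}$. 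Thus, up to an absolute constant, every degree-$d$ monomial equals $f(\alpha)^{p}(\partial_\alpha f(\alpha))^{e}$ times an $I$- or $J$-integral (or such a constant).

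Applying Lemma \ref{nonlinearlemma} with $s=0$ to the $I$- and $J$-integrals — the $g$-slot and the other $f(\alpha-\beta)$-slots contribute $\fzeronenu$-norms, the $\Delta_\beta f$-slots contribute $\foneonenu$-norms — together with the Banach-algebra property of $\fzeronenu$ for the prefactor (contributing $\|f\|_{\fzeronenu}^{p}\|f\|_{\foneonenu}^{e}$) and the elementary bounds $\|f\|_{\fzeronenu},\ \|f\|_{\foneonenu}\le\|f\|_{\mathcal{F}^{1,1}_\nu}$, a degree-$d$ monomial is controlled by $C_{0}^{\,d}\|f\|_{\mathcal{F}^{1,1}_\nu}^{\,d}\|g\|_{\fzeronenu}$ for an absolute constant $C_{0}$ (absorbing the factors $4$ and $10/3$ of Lemma \ref{nonlinearfourierlemma} and the fact that the number of degree-$d$ monomials, weighted by their multinomial coefficients, is $\lesssim C_{0}^{d}$), with exactly one factor $\|g\|_{\fzeronenu}$ throughout. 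Since $C_{0}\|f\|_{\mathcal{F}^{1,1}_\nu}\le C_{0}\varepsilon<1$ for $\varepsilon$ small, the series over $d\ge1$ converges geometrically to a bound $\lesssim\|f\|_{\mathcal{F}^{1,1}_\nu}\|g\|_{\fzeronenu}$ whose constant stays bounded and tends to a universal value as $\|f\|_{\mathcal{F}^{1,1}_\nu}\to0$, i.e. the asserted $\lesssim_1$ estimate.

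It remains to verify that $\mathcal{D}[f]$ preserves mean zero. With the polar curve $z(\alpha)=R(1+f(\alpha))(\cos\alpha,\sin\alpha)$ — the $\alpha$-independent shift in \eqref{z_param} cancels in $z(\alpha)-z(\beta)$ and in $\partial_\alpha z$, so $\mathcal{D}[f](g)(\alpha)=-BR[z](g)(\alpha)\cdot\partial_\alpha z(\alpha)$ for this $z$ — integrating in $\alpha$ and exchanging the order of integration gives $\int_{\mathbb{T}}\mathcal{D}[f](g)\,d\alpha=-\frac{1}{2\pi}\pv\!\int_{\mathbb{T}}g(\beta)\Big(\pv\!\int_{\mathbb{T}}\frac{(z(\alpha)-z(\beta))^{\perp}\cdot\partial_\alpha z(\alpha)}{|z(\alpha)-z(\beta)|^{2}}\,d\alpha\Big)d\beta$. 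The inner integrand equals $\partial_\alpha\arg(z(\alpha)-z(\beta))$, and since for $\|f\|$ small the curve is simple (being close to a circle) the argument of $z(\alpha)-z(\beta)$ increases by exactly $\pi$ as $\alpha$ runs once around $\mathbb{T}$, the inner integral equals $\pi$ for every $\beta$; hence $\int_{\mathbb{T}}\mathcal{D}[f](g)\,d\alpha=-\frac12\int_{\mathbb{T}}g=0$ since $g\in Z$. The main obstacle is the bookkeeping of the second paragraph — checking that every monomial of the fully expanded kernel genuinely fits the template of Lemma \ref{nonlinearfourierlemma}, the $\sin(\beta/2)$-numerator monomials requiring the small rewriting indicated, and that the combinatorial weights stay summable; once that is in place, Lemma \ref{nonlinearlemma} closes the estimate mechanically.
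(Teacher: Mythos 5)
Your proof is correct, and for the operator bound it follows essentially the paper's route: expand the kernel of $\mathcal{D}[f]$ in powers of $f$ (your direct geometric expansion of $(1+\mathcal{R})^{-1}$ is a slightly cleaner bookkeeping than the paper's two-step expansion of the denominator), note that the zero-order term annihilates mean-zero $g$, cast every remaining monomial in the $I$/$J$ form of Lemma \ref{nonlinearfourierlemma} (pulling $\alpha$-only factors such as $f(\alpha)^p(\partial_\alpha f(\alpha))^e$ out of the $\beta$-integral and using the algebra property of $\mathcal{F}^{0,1}_\nu$, a step the paper leaves implicit), and then apply Lemma \ref{nonlinearlemma} with $s=0$ and sum the geometric series for $\varepsilon$ small. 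Where you genuinely diverge is the verification that $\mathcal{D}[f]$ preserves mean zero: the paper writes $g=\partial_\alpha h$, integrates by parts in $\beta$, and observes that $\mathcal{D}[z](g)$ is an exact $\alpha$-derivative (the displayed identity following the proposition), so its mean vanishes in one line; you instead integrate in $\alpha$, exchange the order of integration, and use that the principal-value winding of a point lying on a simple $C^1$ closed curve is $1/2$, so the inner integral is identically $\pi$ and $\int_{\mathbb{T}}\mathcal{D}[f](g)\,d\alpha=-\tfrac12\int_{\mathbb{T}}g\,d\alpha=0$. Both arguments are valid; the paper's identity is shorter and needs no topological input, whereas your computation gives slightly more (the output mean is exactly $-\tfrac12$ the input mean) but relies on the curve being simple and on fixing the integer ambiguity in the total argument variation — here both are guaranteed, e.g.\ by deforming through the simple curves $R(1+sf(\alpha))n(\alpha)$, $s\in[0,1]$, since $1+sf>0$, and that justification is worth stating explicitly.
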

That $\mathcal{D}[f](g)$ has mean zero when $g$ does follows from the fact that $\mathcal{D}[f](g)$ can be written as a derivative. In fact, for $g=\partial_\alpha h$,  one can check from the expression of $\mathcal{D}[f](g)$ in terms of $z$ \eqref{Dz} that
\begin{equation*}
\begin{aligned}
\mathcal{D}[z](g)(\alpha,t)&=-BR[z](g)(\alpha,t)\cdot\partial_{\alpha}z(\alpha,t)\\
&=\frac1{2\pi}\partial_\alpha\Big( \pv\int_{\mathbb{T}}
\frac{(z(\alpha,t)\!-\!z(\beta,t))\cdot\partial_{\alpha}z(\beta,t)^\perp }{|z(\alpha,t)-z(\beta,t)|^2}\Omega(\beta,t)d\beta\Big).
\end{aligned}
\end{equation*}
We now rewrite the vorticity equation \eqref{vorticity_tilde} in the more general operator form:
\begin{equation}\label{vorticityoperator}
g(\alpha)=2 A_\mu \mathcal{D}[f](g)(\alpha)+F(\alpha)
\end{equation}
As a direct consequence of Proposition \ref{dnorm}, we have the following lemma.
\begin{prop}[Vorticity equation]\label{propvor2}
If $F\in Z_\nu$ and  $\|f\|_{\mathcal{F}^{1,1}_\nu} < \epsilon$ such that
$$2A_{\mu}\|\mathcal{D}[f]\|_{Z_\nu \rightarrow Z_\nu} < 1,$$
then the solution to \eqref{vorticityoperator} in the function space $Z_\nu$ is given by the Neumann series
\begin{equation}\label{neumann}
g = \sum_{n=0}^{\infty} 2^{n} A_{\mu}^{n} \mathcal{D}[f]^{n}(F).
\end{equation}
Moreover,
\begin{equation}\label{gnorm}
\|g\|_{\dot{\mathcal{F}}^{0,1}_\nu} \lesssim \Big(\sum_{n=0}^{\infty}2^{n}A_{\mu}^{n}\|f\|_{\mathcal{F}^{1,1}_\nu}^{n}\Big) \|F\|_{\dot{\mathcal{F}}^{0,1}_\nu}.
\end{equation}
\end{prop}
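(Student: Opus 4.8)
The plan is to read \eqref{vorticityoperator} as the linear fixed point equation $(\mathrm{Id}-T)g=F$ for the operator $T:=2A_\mu\mathcal{D}[f]$ on the Banach space $Z_\nu$ of \eqref{Z_space}, and to invert it by the Neumann series in the usual way once one knows $\|T\|<1$. So first I would record the two structural facts. On one hand, $Z_\nu$ equipped with $\|g\|_{\dot{\mathcal{F}}^{0,1}_\nu}=\sum_{k\neq0}e^{\nu|k|t}|\widehat g(k)|$ is complete, being isometric to the weighted space $\ell^1(\mathbb{Z}\setminus\{0\},e^{\nu|k|t})$; since the zero mode vanishes on $Z_\nu$, the $\mathcal{F}^{0,1}_\nu$ and $\dot{\mathcal{F}}^{0,1}_\nu$ norms agree there. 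On the other hand, Proposition \ref{dnorm} gives $\mathcal{D}[f]:Z_\nu\to Z_\nu$ with $\|\mathcal{D}[f]\|_{Z_\nu\to Z_\nu}\lesssim_1\|f\|_{\mathcal{F}^{1,1}_\nu}$, so for $\varepsilon$ small (which also makes the hypothesis $2A_\mu\|\mathcal{D}[f]\|_{Z_\nu\to Z_\nu}<1$ hold) the operator $T$ has operator norm $q:=2|A_\mu|\,\|\mathcal{D}[f]\|_{Z_\nu\to Z_\nu}<1$ on $Z_\nu$.

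Next I would prove that \eqref{neumann} defines the solution. Since $\|T^nF\|_{\dot{\mathcal{F}}^{0,1}_\nu}\leq q^n\|F\|_{\dot{\mathcal{F}}^{0,1}_\nu}$ with $q<1$, the partial sums $\sum_{n=0}^N T^nF$ are Cauchy in $Z_\nu$ and converge to some $g\in Z_\nu$, which is exactly the series $\sum_{n\geq0}2^nA_\mu^n\mathcal{D}[f]^n(F)$. Applying the bounded operator $T$ term by term yields $Tg=\sum_{n\geq1}T^nF=g-F$, i.e. $g=2A_\mu\mathcal{D}[f](g)+F$, so $g$ solves \eqref{vorticityoperator}. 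Uniqueness in $Z_\nu$ is immediate: a difference of two solutions is fixed by $T$, hence has $\dot{\mathcal{F}}^{0,1}_\nu$-norm at most $q$ times itself, hence vanishes because $q<1$.

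Finally, for \eqref{gnorm} I would just sum the geometric majorant, $\|g\|_{\dot{\mathcal{F}}^{0,1}_\nu}\leq\sum_{n\geq0}\|T^nF\|_{\dot{\mathcal{F}}^{0,1}_\nu}\leq\big(\sum_{n\geq0}(2|A_\mu|)^n\|\mathcal{D}[f]\|_{Z_\nu\to Z_\nu}^n\big)\|F\|_{\dot{\mathcal{F}}^{0,1}_\nu}$, and then insert the bound $\|\mathcal{D}[f]\|_{Z_\nu\to Z_\nu}\lesssim_1\|f\|_{\mathcal{F}^{1,1}_\nu}$ from Proposition \ref{dnorm}. I do not anticipate a genuine obstacle here; the one place needing care is this last substitution, which inserts the $\lesssim_1$ constant $C(\|f\|_{\mathcal{F}^{1,1}_\nu})$ into the ratio of a geometric series term by term. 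Choosing $\varepsilon$ small enough that $2|A_\mu|C(\varepsilon)\varepsilon\leq\tfrac12$ keeps both $\sum_n(2|A_\mu|C(\|f\|_{\mathcal{F}^{1,1}_\nu})\|f\|_{\mathcal{F}^{1,1}_\nu})^n$ and $\sum_n(2|A_\mu|\|f\|_{\mathcal{F}^{1,1}_\nu})^n$ between $1$ and $2$, so they are comparable up to a universal constant and \eqref{gnorm} follows; note also that $|A_\mu|\leq1$ guarantees the series on the right-hand side of \eqref{gnorm} itself converges. In short, all the substance is already contained in Proposition \ref{dnorm}, and the rest is the textbook Neumann series argument plus completeness of $Z_\nu$.
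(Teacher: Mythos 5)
Your proof is correct and is essentially the argument the paper intends: Proposition \ref{propvor2} is presented there as a direct consequence of the operator bound in Proposition \ref{dnorm}, i.e. precisely the standard Neumann series inversion of $\mathrm{Id}-2A_\mu\mathcal{D}[f]$ on the complete space $Z_\nu$, with the geometric summation giving \eqref{gnorm}. Your extra care about the $\lesssim_1$ constant in passing from $\|\mathcal{D}[f]\|_{Z_\nu\to Z_\nu}$ to $\|f\|_{\mathcal{F}^{1,1}_\nu}$, and about $|A_\mu|\leq 1$, is a reasonable tightening of what the paper leaves implicit, not a different route.
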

\begin{prop}\label{gfoneone}
If $F\in Z_\nu^{1,1}$ and $\|f\|_{\mathcal{F}^{1,1}_\nu} < \epsilon,$ then any solution to \eqref{vorticityoperator} satisfies the estimate
\begin{equation*}
\|g\|_{\foneonenu} \lesssim_{1} \|f\|_{\ftwoonenu}\|F\|_{\dot{\mathcal{F}}^{0,1}_\nu} + \|F\|_{\foneonenu}.
\end{equation*}
\end{prop}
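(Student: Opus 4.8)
The plan is to upgrade the $Z_\nu\to Z_\nu$ bound for $\mathcal{D}[f]$ from Proposition \ref{dnorm} to the higher-regularity estimate
\begin{equation}\label{Dhigherbound}
\|\mathcal{D}[f](g)\|_{\dot{\mathcal{F}}^{1,1}_\nu}\lesssim_1 \|f\|_{\dot{\mathcal{F}}^{1,1}_\nu}\|g\|_{\dot{\mathcal{F}}^{1,1}_\nu}+\|f\|_{\dot{\mathcal{F}}^{2,1}_\nu}\|g\|_{\dot{\mathcal{F}}^{0,1}_\nu},
\end{equation}
and then to read the conclusion off the Neumann series \eqref{neumann}. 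We may assume $\|f\|_{\dot{\mathcal{F}}^{2,1}_\nu}<\infty$, since otherwise the asserted bound is vacuous.

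To prove \eqref{Dhigherbound} I would use the Taylor expansion of the kernel of $\mathcal{D}[f]$ in $f$ set up in the paragraphs before Proposition \ref{dnorm}: there $\mathcal{D}[f](g)$ is written as a convergent series of terms each of which --- possibly after using the identity $2\sin(\beta/2)\,\Delta_\beta f(\alpha)=f(\alpha-\beta)-f(\alpha)$ and pulling the $\beta$-independent factors $f(\alpha)$, $\partial_\alpha f(\alpha)$ out of the integral --- is of the type $I$ or $J$ of Lemma \ref{nonlinearfourierlemma}, with $g$ the unique function evaluated at $\alpha-\beta$ and with $m\ge1$ factors of $f$. The extracted scalars are harmless for the Fourier estimate of Lemma \ref{nonlinearfourierlemma}: a factor $f(\alpha)$ contributes $|\hat f(k)|$ to the convolution, exactly as an $\alpha-\beta$-factor does, and $\partial_\alpha f(\alpha)$ contributes $|k||\hat f(k)|$, exactly as a $\Delta_\beta$-factor does, so Lemma \ref{nonlinearlemma} applies verbatim with $s=1$ to every term. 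It places the top ($\dot{\mathcal{F}}^{1,1}_\nu$ resp.\ $\dot{\mathcal{F}}^{2,1}_\nu$) regularity either on $g$, giving $\lesssim\|g\|_{\dot{\mathcal{F}}^{1,1}_\nu}\|f\|_{\dot{\mathcal{F}}^{1,1}_\nu}^{m}$, or on a single copy of $f$, giving $\lesssim\|f\|_{\dot{\mathcal{F}}^{2,1}_\nu}\|f\|_{\dot{\mathcal{F}}^{1,1}_\nu}^{m-1}\|g\|_{\dot{\mathcal{F}}^{0,1}_\nu}$ when that copy is a $\Delta_\beta$- or $\partial_\alpha$-factor (and an even smaller $\lesssim\|f\|_{\dot{\mathcal{F}}^{1,1}_\nu}^{m}\|g\|_{\dot{\mathcal{F}}^{0,1}_\nu}$ when it is an $f(\alpha)$-factor). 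Since the number of order-$m$ terms grows at most geometrically in $m$ while each carries a factor $\|f\|_{\mathcal{F}^{1,1}_\nu}^{\,m-1}\lesssim_1\|f\|_{\dot{\mathcal{F}}^{1,1}_\nu}^{\,m-1}$, the series converges for $\|f\|_{\mathcal{F}^{1,1}_\nu}\le\varepsilon$ small and sums to \eqref{Dhigherbound}, the order-one ($\mathcal{D}_1$) contribution being dominant.

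With \eqref{Dhigherbound} in hand, set $h_n:=\mathcal{D}[f]^{\,n}(F)$, so that $g=\sum_{n\ge0}2^{n}A_\mu^{n}h_n$ by \eqref{neumann}. Proposition \ref{dnorm} gives $\|h_n\|_{\dot{\mathcal{F}}^{0,1}_\nu}\le(C_0\|f\|_{\mathcal{F}^{1,1}_\nu})^{n}\|F\|_{\dot{\mathcal{F}}^{0,1}_\nu}$ and \eqref{Dhigherbound} gives the recursion $\|h_{n+1}\|_{\dot{\mathcal{F}}^{1,1}_\nu}\le C_1\|f\|_{\dot{\mathcal{F}}^{1,1}_\nu}\|h_n\|_{\dot{\mathcal{F}}^{1,1}_\nu}+C_1\|f\|_{\dot{\mathcal{F}}^{2,1}_\nu}\|h_n\|_{\dot{\mathcal{F}}^{0,1}_\nu}$; iterating this and summing the two resulting (for $\varepsilon$ small and $|A_\mu|\le1$, convergent) geometric-type series gives
\begin{equation*}
\|g\|_{\dot{\mathcal{F}}^{1,1}_\nu}\le\sum_{n\ge0}(2|A_\mu|)^{n}\|h_n\|_{\dot{\mathcal{F}}^{1,1}_\nu}\lesssim_1\|F\|_{\dot{\mathcal{F}}^{1,1}_\nu}+\|f\|_{\dot{\mathcal{F}}^{2,1}_\nu}\|F\|_{\dot{\mathcal{F}}^{0,1}_\nu},
\end{equation*}
which, by uniqueness of the $Z_\nu$-solution of \eqref{vorticityoperator} (Proposition \ref{propvor2}), is the claimed estimate. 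An equivalent, slightly shorter route is to plug \eqref{Dhigherbound} directly into $g=2A_\mu\mathcal{D}[f](g)+F$, absorb $2|A_\mu|C_1\|f\|_{\dot{\mathcal{F}}^{1,1}_\nu}\|g\|_{\dot{\mathcal{F}}^{1,1}_\nu}\le\tfrac12\|g\|_{\dot{\mathcal{F}}^{1,1}_\nu}$ into the left-hand side (legitimate for $\varepsilon$ small), and then insert $\|g\|_{\dot{\mathcal{F}}^{0,1}_\nu}\lesssim\|F\|_{\dot{\mathcal{F}}^{0,1}_\nu}$ from \eqref{gnorm}; this variant needs $\|g\|_{\dot{\mathcal{F}}^{1,1}_\nu}<\infty$ a priori, which is precisely what the Neumann-series computation supplies.

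I expect the main difficulty to lie in the bookkeeping behind \eqref{Dhigherbound}: one must check that, throughout the Taylor expansion of the kernel, no term places two derivatives on $g$ or the top-order $\dot{\mathcal{F}}^{2,1}_\nu$-norm on two distinct copies of $f$ at the same time, and that the combinatorial multiplicity of order-$m$ terms is beaten by the smallness of $\|f\|_{\mathcal{F}^{1,1}_\nu}$. A minor secondary point is the interpretation of ``any solution'': one uses uniqueness in $Z_\nu$ from Proposition \ref{propvor2} together with the fact that the Neumann series in fact converges in the stronger space $Z_\nu^{1,1}$ as soon as $F\in Z_\nu^{1,1}$ and $\|f\|_{\dot{\mathcal{F}}^{2,1}_\nu}<\infty$.
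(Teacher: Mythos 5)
Your proposal is correct and takes essentially the same approach as the paper: the core step is exactly the paper's application of Lemma \ref{nonlinearlemma} with $s=1$ to the kernel expansion of $\mathcal{D}[f]$, giving $\|g\|_{\dot{\mathcal{F}}^{1,1}_\nu}\lesssim_1\|f\|_{\dot{\mathcal{F}}^{1,1}_\nu}\|g\|_{\dot{\mathcal{F}}^{1,1}_\nu}+\|f\|_{\dot{\mathcal{F}}^{2,1}_\nu}\|g\|_{\dot{\mathcal{F}}^{0,1}_\nu}+\|F\|_{\dot{\mathcal{F}}^{1,1}_\nu}$, then absorbing the first term by smallness of $\|f\|_{\dot{\mathcal{F}}^{1,1}_\nu}$ and inserting \eqref{gnorm}, which is precisely your ``shorter route.'' Your primary Neumann-series iteration is only a minor variant whose added value is to supply the a priori finiteness of $\|g\|_{\dot{\mathcal{F}}^{1,1}_\nu}$ (and the reduction of ``any solution'' to the series solution via uniqueness in $Z_\nu$), a point the paper's one-line absorption leaves implicit.
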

\begin{proof}
We will use Lemma \ref{nonlinearlemma} for $s=1$ to obtain that
\begin{equation*}
\|g\|_{\foneonenu}\lesssim_{1} \|f\|_{\foneonenu}\|g\|_{\foneonenu} + \|f\|_{\ftwoonenu}\|g\|_{\dot{\mathcal{F}}^{0,1}_\nu} + \|F\|_{\foneonenu}
\end{equation*}
Hence, taking $\|f\|_{\foneonenu} < \epsilon$ for $\epsilon$ small enough and using \eqref{gnorm}, we obtain  the desired estimate.

\end{proof}

\begin{cor}[Vorticity Estimate]\label{vorticity_cor} 
Let $f\in\mathcal{F}^{4,1}_\nu$ with $\|f\|_{\mathcal{F}^{1,1}}<\varepsilon$ and denote
\begin{equation}\label{vort_F}
    \begin{aligned}
        F[f](\alpha):=2\partial_{\alpha}K(f)(\alpha)-2A_{\rho\sigma} \partial_\alpha\big((1+f(\alpha))\sin{\alpha}\big),
    \end{aligned}
\end{equation}
where $K(f)$ is defined in \eqref{curvaturef}. Then, if we define $\tilde{\omega}$ by the Neumann series
\begin{equation*}
\tilde{\omega} = \sum_{n=0}^{\infty} 2^{n} A_{\mu}^{n} \mathcal{D}[f]^{n}(F[f]),
\end{equation*}
it holds that $\tilde{\omega}\in\mathcal{F}^{1,1}_\nu$, it solves the equation \eqref{vorticityoperator} with $F$ given by \eqref{vort_F} and, moreover,
\begin{equation*}
\|\tilde{w}_{\geq1}\|_{\dot{\mathcal{F}}^{0,1}_\nu}\lesssim_1 |A_{\rho\sigma}|(1+A_\mu)\|f\|_{\mathcal{F}^{1,1}_\nu}+\|f\|_{\dot{\mathcal{F}}^{3,1}_\nu},
\end{equation*}
and
\begin{equation*}
\|\tilde{\omega}_{\geq 1}\|_{\foneonenu} \lesssim_{1} |A_{\rho\sigma}|(1+A_\mu)\|f\|_{\mathcal{F}^{2,1}_\nu} + \|f\|_{\ffourone_{\nu}}. 
\end{equation*}
\end{cor}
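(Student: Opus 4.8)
The plan is to identify $\tilde\omega_{\geq1}$ as the solution of a vorticity equation of the form \eqref{vorticityoperator} whose forcing is \emph{itself} controlled by $\|f\|$, and then feed that forcing into Propositions \ref{propvor2} and \ref{gfoneone}. First I would set up a subtracted equation. The map $f\mapsto F[f]$ of \eqref{vort_F} lands in $Z^{1,1}_\nu$: $F[f]$ is an exact $\alpha$-derivative, hence of zero mean, and $\|F[f]\|_{\foneonenu}<\infty$ once $f\in\dot{\mathcal{F}}^{4,1}_\nu$ (from the curvature expansion below). Since $\mathcal{D}_0\equiv0$ on $Z$, the trivial datum gives $F[0]=-2A_{\rho\sigma}\cos\alpha=\tilde\omega_0$ as its own Neumann series; subtracting and using linearity of $\mathcal{D}[f]$ in the vorticity argument, $\tilde\omega_{\geq1}=\tilde\omega-\tilde\omega_0$ satisfies
\begin{equation*}
\tilde\omega_{\geq1}(\alpha)=2A_\mu\mathcal{D}[f](\tilde\omega_{\geq1})(\alpha)+\tilde F(\alpha),\qquad \tilde F:=F_{\geq1}[f]+2A_\mu\mathcal{D}[f](\tilde\omega_0),
\end{equation*}
where $F_{\geq1}[f]=F[f]-F[0]=2\partial_\alpha K_{\geq1}(f)-2A_{\rho\sigma}\partial_\alpha\big(f(\alpha)\sin\alpha\big)\in Z^{1,1}_\nu$. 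This is exactly \eqref{vorticityoperator}, so Propositions \ref{propvor2} and \ref{gfoneone} apply to $\tilde\omega_{\geq1}$ with this $\tilde F$. The key structural observation is that although $\tilde\omega_0$ is not small, $\mathcal{D}[f]$ is linear and, by Proposition \ref{dnorm}, bounded by $\|f\|_{\mathcal{F}^{1,1}_\nu}$, so $2A_\mu\mathcal{D}[f](\tilde\omega_0)$ still carries a factor $\|f\|$; this is precisely what generates the $A_\mu$-proportional piece of the final bound.

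Next I would estimate $\tilde F$. For the curvature part, $\|f\|_{\mathcal{F}^{1,1}_\nu}\le\varepsilon$ controls $\|f\|_{W^{1,\infty}}$, so the rational expression \eqref{curvaturef} can be expanded in a convergent Taylor series; its linear part is $K_1(f)=-\partial_\alpha^2 f-f$ by \eqref{Klin}, and every higher-order term contains at least one small factor ($f$ or $\partial_\alpha f$) and at most one copy of $\partial_\alpha^2 f$. Using that $\mathcal{F}^{s,1}_\nu$ is a Banach algebra with $\|gh\|_{\mathcal{F}^{s,1}_\nu}\lesssim\|g\|_{\mathcal{F}^{s,1}_\nu}\|h\|_{\mathcal{F}^{0,1}_\nu}+\|g\|_{\mathcal{F}^{0,1}_\nu}\|h\|_{\mathcal{F}^{s,1}_\nu}$, the monotonicity and interpolation of Lemma \ref{interp_lem}, and $\|f\|_{\mathcal{F}^{s,1}_\nu}\lesssim_1\|f\|_{\dot{\mathcal{F}}^{s,1}_\nu}$ from \eqref{zerof11}, one obtains $\|K_{\geq1}(f)\|_{\dot{\mathcal{F}}^{1,1}_\nu}\lesssim_1\|f\|_{\dot{\mathcal{F}}^{3,1}_\nu}$ and $\|K_{\geq1}(f)\|_{\ftwoonenu}\lesssim_1\|f\|_{\ffourone_\nu}$; multiplication by $\sin\alpha$ shifts Fourier frequencies by $\pm1$ only, giving $\|\partial_\alpha(f\sin\alpha)\|_{\dot{\mathcal{F}}^{0,1}_\nu}\lesssim\|f\|_{\mathcal{F}^{1,1}_\nu}$ and $\|\partial_\alpha(f\sin\alpha)\|_{\foneonenu}\lesssim\|f\|_{\mathcal{F}^{2,1}_\nu}$. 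For the term $\mathcal{D}[f](\tilde\omega_0)$, recall (as recorded just before Proposition \ref{dnorm}) that $\mathcal{D}[f]$ is a convergent sum of operators of the type $I,J$ in Lemma \ref{nonlinearfourierlemma}, so Lemma \ref{nonlinearlemma} with $s=0,1$ together with $\|\tilde\omega_0\|_{\mathcal{F}^{s,1}_\nu}\lesssim|A_{\rho\sigma}|$ yields $\|\mathcal{D}[f](\tilde\omega_0)\|_{\dot{\mathcal{F}}^{0,1}_\nu}\lesssim_1|A_{\rho\sigma}|\|f\|_{\mathcal{F}^{1,1}_\nu}$ and $\|\mathcal{D}[f](\tilde\omega_0)\|_{\foneonenu}\lesssim_1|A_{\rho\sigma}|\|f\|_{\ftwoonenu}$. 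Assembling,
\begin{equation*}
\|\tilde F\|_{\dot{\mathcal{F}}^{0,1}_\nu}\lesssim_1|A_{\rho\sigma}|(1+A_\mu)\|f\|_{\mathcal{F}^{1,1}_\nu}+\|f\|_{\dot{\mathcal{F}}^{3,1}_\nu},\qquad \|\tilde F\|_{\foneonenu}\lesssim_1|A_{\rho\sigma}|(1+A_\mu)\|f\|_{\mathcal{F}^{2,1}_\nu}+\|f\|_{\ffourone_\nu}.
\end{equation*}

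Finally I would conclude. For $\varepsilon$ small enough that $2A_\mu\|\mathcal{D}[f]\|_{Z_\nu\to Z_\nu}<1$, Proposition \ref{propvor2} gives $\tilde\omega=\sum_{n\ge0}2^nA_\mu^n\mathcal{D}[f]^n(F[f])\in Z_\nu$, so it is the $Z_\nu$-solution of \eqref{vorticityoperator} with $F=F[f]$; combined with $\tilde\omega_{\geq1}\in\foneonenu$ from the second estimate below, this shows $\tilde\omega\in\mathcal{F}^{1,1}_\nu$. Applying \eqref{gnorm} to $\tilde\omega_{\geq1}$ with forcing $\tilde F$ gives $\|\tilde\omega_{\geq1}\|_{\dot{\mathcal{F}}^{0,1}_\nu}\lesssim\|\tilde F\|_{\dot{\mathcal{F}}^{0,1}_\nu}$, which is the first claimed bound. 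For the second, Proposition \ref{gfoneone} applied to $\tilde\omega_{\geq1}$ with forcing $\tilde F$ gives $\|\tilde\omega_{\geq1}\|_{\foneonenu}\lesssim_1\|f\|_{\ftwoonenu}\|\tilde F\|_{\dot{\mathcal{F}}^{0,1}_\nu}+\|\tilde F\|_{\foneonenu}$; inserting the two bounds for $\tilde F$ and absorbing, via the interpolation of Lemma \ref{interp_lem}, the cross terms $\|f\|_{\ftwoonenu}\|f\|_{\dot{\mathcal{F}}^{3,1}_\nu}\lesssim\|f\|_{\foneonenu}\|f\|_{\ffourone_\nu}\lesssim\varepsilon\|f\|_{\ffourone_\nu}$ and $\|f\|_{\ftwoonenu}|A_{\rho\sigma}|(1+A_\mu)\|f\|_{\mathcal{F}^{1,1}_\nu}\lesssim\varepsilon|A_{\rho\sigma}|(1+A_\mu)\|f\|_{\mathcal{F}^{2,1}_\nu}$ yields the second bound.

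The step I expect to be the main obstacle is the curvature expansion: one must organize the Taylor series of \eqref{curvaturef} so that the single top-order contribution is exactly the linear term $-\partial_\alpha^2 f$ (producing the clean $\|f\|_{\dot{\mathcal{F}}^{3,1}_\nu}$ and $\|f\|_{\ffourone_\nu}$ terms), while every remaining term loses at most as many derivatives but gains a smallness factor $\lesssim\|f\|_{\mathcal{F}^{1,1}_\nu}$; the only other delicate point is the structural remark that $\tilde\omega_0$, despite not being small, is harmless because it enters only through the bounded linear operator $\mathcal{D}[f]$.
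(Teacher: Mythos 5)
Your proposal is correct and follows essentially the same route as the paper: the paper estimates $\tilde\omega_{\geq1}=F_{\geq1}[f]+\sum_{n\geq1}2^nA_\mu^n\mathcal{D}[f]^n(F[f])$ directly, whereas you equivalently recast $\tilde\omega_{\geq1}$ as the solution of \eqref{vorticityoperator} with forcing $\tilde F=F_{\geq1}[f]+2A_\mu\mathcal{D}[f](\tilde\omega_0)$ and then invoke Propositions \ref{propvor2} and \ref{gfoneone}. The substantive ingredients --- the operator bound of Proposition \ref{dnorm} giving the $A_\mu\|f\|_{\mathcal{F}^{1,1}_\nu}$-sized contribution of $\tilde\omega_0=F_0[f]$, the estimates $\|F_0[f]\|_{\mathcal{F}^{0,1}_\nu}\lesssim|A_{\rho\sigma}|$, $\|F_{\geq1}[f]\|_{\dot{\mathcal{F}}^{0,1}_\nu}\lesssim_1\|f\|_{\dot{\mathcal{F}}^{3,1}_\nu}+|A_{\rho\sigma}|\|f\|_{\mathcal{F}^{1,1}_\nu}$, $\|F_{\geq1}[f]\|_{\dot{\mathcal{F}}^{1,1}_\nu}\lesssim_1\|f\|_{\dot{\mathcal{F}}^{4,1}_\nu}+|A_{\rho\sigma}|\|f\|_{\mathcal{F}^{2,1}_\nu}$, and interpolation to absorb the cross terms --- coincide with the paper's.
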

\begin{proof}
It only remains to prove the estimates. We write
\begin{equation*}
    \begin{aligned}
        \tilde{\omega}_{\geq1} =F[f]-F_0[f]+\sum_{n=1}^{\infty} 2^{n} A_{\mu}^{n} \mathcal{D}[f]^{n}(F[f]),
    \end{aligned}
\end{equation*}
thus, using  Propositions \ref{dnorm}-\ref{gfoneone},
\begin{equation*}
    \begin{aligned}
        \|\tilde{\omega}_{\geq1}\|_{\dot{\mathcal{F}}^{0,1}_\nu} &\lesssim_1 \|F_{\geq1}[f]\|_{\dot{\mathcal{F}}^{0,1}_\nu}+ A_\mu\|f\|_{\mathcal{F}^{1,1}_\nu}\|F_0[f]\|_{\mathcal{F}^{0,1}_\nu},
    \end{aligned}
\end{equation*}
and
\begin{equation*}
    \begin{aligned}
        \|\tilde{\omega}_{\geq1}\|_{\dot{\mathcal{F}}^{1,1}_\nu} &\lesssim_1 \|F_{\geq1}[f]\|_{\dot{\mathcal{F}}^{1,1}_\nu}+ A_\mu\|f\|_{\mathcal{F}^{2,1}_\nu}\|F_0[f]\|_{\mathcal{F}^{0,1}_\nu}.
    \end{aligned}
\end{equation*}
Finally, the estimates 
\begin{equation*}
    \begin{aligned}
        \|F_0[f]\|_{\mathcal{F}^{0,1}_\nu}&\lesssim |A_{\rho\sigma}|,\\
        \|F_{\geq1}[f]\|_{\dot{\mathcal{F}}^{0,1}_\nu}&\lesssim_1 \|f\|_{\dot{\mathcal{F}}^{3,1}_\nu}+|A_{\rho\sigma}|\|f\|_{\mathcal{F}^{1,1}_\nu},\\
        \|F_{\geq1}[f]\|_{\dot{\mathcal{F}}^{1,1}_\nu}&\lesssim_1 \|f\|_{\dot{\mathcal{F}}^{4,1}_\nu}+|A_{\rho\sigma}|\|f\|_{\mathcal{F}^{2,1}_\nu},
    \end{aligned}
\end{equation*}
allow to conclude the proof.

\end{proof}

\section*{Acknowledgements}

FG and EGJ were partially supported by the AEI grants PID2020-114703GB-I00, PID2022-140494NA-I00 and RED2022-134784-T funded by MCIN/AEI/10.13039/501100011033. FG was partially supported by the AEI grant EUR2020-112271, by the Fundacion de Investigacion de la Universidad de Sevilla through the grant FIUS23/0207 and acknowledges support from IMAG, funded by MICINN through the Maria de Maeztu
Excellence Grant CEX2020-001105-M/AEI/10.13039/501100011033. EGJ was partially supported by the RYC 2021-032877 research grant, the AEI grant PID2021-125021NA-I00 and the AGAUR grant 2021-SGR-0087. RMS was partially supported by the NSF grant DMS-2055271 (USA). NP is partially supported by the AMS-Simons PUI faculty grant.



\bibliographystyle{amsplain2linkLink}
\bibliography{references}	

\providecommand{\bysame}{\leavevmode\hbox to3em{\hrulefill}\thinspace}
\providecommand{\href}[2]{#2}
\begin{thebibliography}{10}
\expandafter\ifx\csname arxiv\endcsname\relax
  \def\arxiv#1{\burlalt{https://arxiv.org/abs/#1}{https://arxiv.org/abs/#1}}\fi
\expandafter\ifx\csname doi\endcsname\relax
  \def\doi#1{\burlalt{https://doi.org/#1}{https://doi.org/#1}}\fi
\expandafter\ifx\csname href\endcsname\relax
  \def\href#1#2{#2}\fi
\expandafter\ifx\csname burlalt\endcsname\relax
  \def\burlalt#1#2{\href{#2}{#1}}\fi

\bibitem{AgrawalPatelWu2023}
Siddhant Agrawal, Neel Patel, and Sijue Wu, \emph{Rigidity of acute angled
  corners for one phase {M}uskat interfaces}, Adv. Math. \textbf{412} (2023),
  Paper No. 108801, 71, \doi{10.1016/j.aim.2022.108801}.

\bibitem{AlazardLazar2020}
Thomas Alazard and Omar Lazar, \emph{Paralinearization of the {M}uskat equation
  and application to the {C}auchy problem}, Arch. Ration. Mech. Anal.
  \textbf{237} (2020), no.~2, 545--583, \doi{10.1007/s00205-020-01514-6}.

\bibitem{Alazardendpoint}
Thomas Alazard and Quoc-Hung Nguyen, \emph{Endpoint {S}obolev theory for the
  {M}uskat equation}, Comm. Math. Phys. \textbf{397} (2023), no.~3, 1043--1102,
  \doi{10.1007/s00220-022-04514-7}.

\bibitem{AlonsoOranGraneroBelinchon22}
Diego Alonso-Or\'{a}n and Rafael Granero-Belinch\'{o}n, \emph{Global existence
  and decay of the inhomogeneous {M}uskat problem with {L}ipschitz initial
  data}, Nonlinearity \textbf{35} (2022), no.~9, 4749--4778.

\bibitem{Ambrose14}
David~M. Ambrose, \emph{The zero surface tension limit of two-dimensional
  interfacial {D}arcy flow}, J. Math. Fluid Mech. \textbf{16} (2014), no.~1,
  105--143, \doi{10.1007/s00021-013-0146-1}.

\bibitem{Bocchi23}
Edoardo Bocchi and Francisco Gancedo, \emph{Rigorous thin film approximations
  of the one-phase unstable muskat problem},  (2023), \arxiv{2201.06015}.

\bibitem{Cam17}
Stephen Cameron, \emph{{G}lobal well-posedness for the 2{D} {M}uskat problem
  with slope less than 1}, Anal. PDE \textbf{12} (2019), no.~4, 997--1022,
  \doi{10.2140/apde.2019.12.997}.

\bibitem{CastroFaracoMengual22}
\'{A}. Castro, D.~Faraco, and F.~Mengual, \emph{Localized mixing zone for
  {M}uskat bubbles and turned interfaces}, Ann. PDE \textbf{8} (2022), no.~1,
  Paper No. 7, 50, \doi{10.1007/s40818-022-00121-w}.

\bibitem{CCFGL12}
\'{A}ngel Castro, Diego C\'{o}rdoba, Charles Fefferman, Francisco Gancedo, and
  Mar\'{\i}a L\'{o}pez-Fern\'{a}ndez, \emph{Rayleigh-{T}aylor breakdown for the
  {M}uskat problem with applications to water waves}, Ann. of Math. (2)
  \textbf{175} (2012), no.~2, 909--948, \doi{10.4007/annals.2012.175.2.9}.

\bibitem{ChenNguyenXu22}
Ke~Chen, Quoc-Hung Nguyen, and Yiran Xu, \emph{The {M}uskat problem with
  {$C^1$} data}, Trans. Amer. Math. Soc. \textbf{375} (2022), no.~5,
  3039--3060, \doi{10.1090/tran/8559}.

\bibitem{ChengRafaShkoller2016}
C.~H.~Arthur Cheng, Rafael Granero-Belinch\'{o}n, and Steve Shkoller,
  \emph{Well-posedness of the {M}uskat problem with {$H^2$} initial data}, Adv.
  Math. \textbf{286} (2016), 32--104, \doi{10.1016/j.aim.2015.08.026}.

\bibitem{CCGRS-16}
Peter Constantin, Diego C\'{o}rdoba, Francisco Gancedo, Luis
  Rodr\'{i}guez-Piazza, and Robert~M. Strain, \emph{On the {M}uskat problem:
  global in time results in 2{D} and 3{D}}, Amer. J. Math. \textbf{138} (2016),
  no.~6, 1455--1494, \doi{10.1353/ajm.2016.0044}.

\bibitem{ConstGancShvyVicol2017}
Peter Constantin, Francisco Gancedo, Roman Shvydkoy, and Vlad Vicol,
  \emph{Global regularity for 2{D} {M}uskat equations with finite slope}, Ann.
  Inst. H. Poincar\'{e} C Anal. Non Lin\'{e}aire \textbf{34} (2017), no.~4,
  1041--1074, \doi{10.1016/j.anihpc.2016.09.001}.

\bibitem{CP93}
Peter Constantin and Mary Pugh, \emph{Global solutions for small data to the
  {H}ele-{S}haw problem}, Nonlinearity \textbf{6} (1993), no.~3, 393--415,
  \doi{10.1088/0951-7715/6/3/004}.

\bibitem{CCG11}
Antonio C\'{o}rdoba, Diego C\'{o}rdoba, and Francisco Gancedo, \emph{Interface
  evolution: the {H}ele-{S}haw and {M}uskat problems}, Ann. of Math. (2)
  \textbf{173} (2011), no.~1, 477--542, \doi{10.4007/annals.2011.173.1.10}.

\bibitem{CG07}
Diego C\'{o}rdoba and Francisco Gancedo, \emph{Contour dynamics of
  incompressible 3-{D} fluids in a porous medium with different densities},
  Comm. Math. Phys. \textbf{273} (2007), no.~2, 445--471,
  \doi{10.1007/s00220-007-0246-y}.

\bibitem{CordobaGSZlatos2015}
Diego C\'{o}rdoba, Javier G\'{o}mez-Serrano, and Andrej Zlato\v{s}, \emph{A
  note on stability shifting for the {M}uskat problem}, Philos. Trans. Roy.
  Soc. A \textbf{373} (2015), no.~2050, 20140278, 10,
  \doi{10.1098/rsta.2014.0278}.

\bibitem{CordobaGSZlatos2017}
\bysame, \emph{A note on stability shifting for the {M}uskat problem, {II}:
  {F}rom stable to unstable and back to stable}, Anal. PDE \textbf{10} (2017),
  no.~2, 367--378, \doi{10.2140/apde.2017.10.367}.

\bibitem{CordobaLazar2021}
Diego C\'{o}rdoba and Omar Lazar, \emph{Global well-posedness for the 2{D}
  stable {M}uskat problem in {$H^{3/2}$}}, Ann. Sci. \'{E}c. Norm. Sup\'{e}r.
  (4) \textbf{54} (2021), no.~5, 1315--1351, \doi{10.24033/asens.2483}.

\bibitem{Darcy56}
Henry Darcy, \emph{Les {F}ontaines {P}ubliques de la {V}ille de {D}ijon},
  Dalmont, Paris, 1856.

\bibitem{DongGancedoNguyen2023}
Hongjie Dong, Francisco Gancedo, and Huy~Q. Nguyen, \emph{Global well-posedness
  for the one-phase {M}uskat problem}, Comm. Pure Appl. Math. \textbf{76}
  (2023), no.~12, 3912--3967, \doi{10.1002/cpa.22124}.

\bibitem{EscherMatioc2011}
Joachim Escher and Bogdan-Vasile Matioc, \emph{On the parabolicity of the
  {M}uskat problem: well-posedness, fingering, and stability results}, Z. Anal.
  Anwend. \textbf{30} (2011), no.~2, 193--218, \doi{10.4171/ZAA/1431}.

\bibitem{PatrickNguyen21}
Patrick~T. Flynn and Huy~Q. Nguyen, \emph{The vanishing surface tension limit
  of the {M}uskat problem}, Comm. Math. Phys. \textbf{382} (2021), no.~2,
  1205--1241, \doi{10.1007/s00220-021-03980-9}.

\bibitem{GGPS}
F.~Gancedo, E.~Garc\'{i}a-Ju\'{a}rez, N.~Patel, and R.~M. Strain, \emph{On the
  {M}uskat problem with viscosity jump: {G}lobal in time results}, Adv. Math.
  \textbf{345} (2019), 552--597, \doi{10.1016/j.aim.2019.01.017}.

\bibitem{GancedoGarciaJuarezPatelStrain23}
Francisco Gancedo, Eduardo Garc\'{\i}a-Ju\'{a}rez, Neel Patel, and Robert
  Strain, \emph{Global {R}egularity for {G}ravity {U}nstable {M}uskat
  {B}ubbles}, Mem. Amer. Math. Soc. \textbf{292} (2023), no.~1455,
  \doi{10.1090/memo/1455}.

\bibitem{GancedoGraneroBelinchonStefano2020}
Francisco Gancedo, Rafael Granero-Belinch\'{o}n, and Stefano Scrobogna,
  \emph{Surface tension stabilization of the {R}ayleigh-{T}aylor instability
  for a fluid layer in a porous medium}, Ann. Inst. H. Poincar\'{e} C Anal. Non
  Lin\'{e}aire \textbf{37} (2020), no.~6, 1299--1343,
  \doi{10.1016/j.anihpc.2020.04.005}.

\bibitem{GJGomezSerranoNguyenPausader22}
Eduardo Garc\'{\i}a-Ju\'{a}rez, Javier G\'{o}mez-Serrano, Huy~Q. Nguyen, and
  Beno\^{i}t Pausader, \emph{Self-similar solutions for the {M}uskat equation},
  Adv. Math. \textbf{399} (2022), Paper No. 108294, 30,
  \doi{10.1016/j.aim.2022.108294}.

\bibitem{GHHaziotGomezSerranoPausader23}
Eduardo García-Juárez, Javier Gómez-Serrano, Susanna~V. Haziot, and Benoît
  Pausader, \emph{Desingularization of small moving corners for the {M}uskat
  equation},  (2023), \arxiv{2305.05046}.

\bibitem{JacobsKimMeszaros21}
Matt Jacobs, Inwon Kim, and Alp\'{a}r~R. M\'{e}sz\'{a}ros, \emph{Weak solutions
  to the {M}uskat problem with surface tension via optimal transport}, Arch.
  Ration. Mech. Anal. \textbf{239} (2021), no.~1, 389--430,
  \doi{10.1007/s00205-020-01579-3}.

\bibitem{MatiocMatioc23}
Anca-Voichita Matioc and Bogdan-Vasile Matioc, \emph{A new reformulation of the
  {M}uskat problem with surface tension}, J. Differential Equations
  \textbf{350} (2023), 308--335, \doi{10.1016/j.jde.2023.01.003}.

\bibitem{Matioc19}
Bogdan-Vasile Matioc, \emph{The {M}uskat problem in two dimensions: equivalence
  of formulations, well-posedness, and regularity results}, Anal. PDE
  \textbf{12} (2019), no.~2, 281--332, \doi{10.2140/apde.2019.12.281}.

\bibitem{Muskat34}
Morris Muskat, \emph{Two fluid systems in porous media. the encroachment of
  water into an oil sand}, J. Appl. Phys. \textbf{5} (1934), no.~9, 250--264,
  \doi{10.1063/1.1745259}.

\bibitem{Nguyen20s}
Huy~Q. Nguyen, \emph{On well-posedness of the {M}uskat problem with surface
  tension}, Adv. Math. \textbf{374} (2020), 107344, 35,
  \doi{10.1016/j.aim.2020.107344}.

\bibitem{Nguyen2022}
\bysame, \emph{Global solutions for the {M}uskat problem in the scaling
  invariant {B}esov space {$\dot {B}^1_{\infty,1}$}}, Adv. Math. \textbf{394}
  (2022), Paper No. 108122, 28, \doi{10.1016/j.aim.2021.108122}.

\bibitem{Nguyenlarge23}
Huy~Q. Nguyen, \emph{Large traveling capillary-gravity waves for {D}arcy flow},
   (2023), \arxiv{2311.01299}.

\bibitem{NguyenPausader20subcritical}
Huy~Q. Nguyen and Beno\^{i}t Pausader, \emph{A paradifferential approach for
  well-posedness of the {M}uskat problem}, Arch. Ration. Mech. Anal.
  \textbf{237} (2020), no.~1, 35--100, \doi{10.1007/s00205-020-01494-7}.

\bibitem{PatelNikhil21}
Neel Patel and Nikhil Shankar, \emph{Global results for the inhomogeneous
  muskat problem}, 2021, \arxiv{2102.07754}.

\bibitem{PS17}
Neel Patel and Robert~M. Strain, \emph{Large time decay estimates for the
  {M}uskat equation}, Comm. Partial Differential Equations \textbf{42} (2017),
  no.~6, 977--999, \doi{10.1080/03605302.2017.1321661}.

\bibitem{PrussSimonett2016}
Jan Pr\"{u}ss and Gieri Simonett, \emph{Moving interfaces and quasilinear
  parabolic evolution equations}, Monographs in Mathematics, vol. 105,
  Birkh\"{a}user/Springer, 2016, \doi{10.1007/978-3-319-27698-4}.

\bibitem{Shi2023}
Jia Shi, \emph{The regularity of the solutions to the {M}uskat equation: the
  degenerate regularity near the turnover points},  (2023), \arxiv{2310.14531}.

\bibitem{YT11}
J.~Ye and S.~Tanveer, \emph{Global existence for a translating near-circular
  {H}ele-{S}haw bubble with surface tension}, SIAM J. Math. Anal. \textbf{43}
  (2011), no.~1, 457--506, \doi{10.1137/100786332}.

\bibitem{YT12}
\bysame, \emph{Global solutions for a two-phase {H}ele-{S}haw bubble for a
  near-circular initial shape}, Complex Var. Elliptic Equ. \textbf{57} (2012),
  no.~1, 23--61, \doi{10.1080/17476933.2010.504835}.

\end{thebibliography}

\vspace{1cm}

\end{document}